\newcommand{\R}{\mathbb{R}}
\DeclareMathOperator{\E}{\mathcal{E}}
\newcommand\norm[1]{\left\lVert#1\right\rVert}
\renewcommand{\div}{\operatorname{div}}
\newcommand{\epl}{\varepsilon}
\renewcommand{\d}{\mathrm{d}}
\newcommand{\dd}{\,\mathrm{d}}
\newcommand{\abs}[1]{\left\lvert#1\right\rvert}
\renewcommand{\E}{\mathbb{E}}
\crefname{assumption}{Assumption}{Assumptions}
\crefname{figure}{Figure}{Figures}
\newtheorem{lemma}{Lemma}[section]
\newtheorem{proposition}[lemma]{Proposition}
\newtheorem{theorem}[lemma]{Theorem}
\newtheorem{remark}[lemma]{Remark}
\newtheorem{assumption}[lemma]{Assumptions}
\newtheorem*{maintheorem*}{Main Theorem}
\theoremstyle{definition}{}
\numberwithin{equation}{section}
\begin{document}

\pagenumbering{arabic}

\title[SGDCT for drift identification in multiscale diffusions]{Stochastic gradient descent in continuous time \\ for drift identification in multiscale diffusions}

\author{Max Hirsch}\address[MH]{Department of Mathematics, University of California Berkeley, Berkeley, CA 94720, USA}\email[]{ \texttt{mhirsch@berkeley.edu}}
\author{Andrea Zanoni}\address[AZ]{Institute for Computational and Mathematical Engineering, Stanford University, Stanford, CA 94305, USA}\email[]{\texttt{azanoni@stanford.edu}}
	
\begin{abstract}
We consider the setting of multiscale overdamped Langevin stochastic differential equations, and study the problem of learning the drift function of the homogenized dynamics from continuous-time observations of the multiscale system. We decompose the drift term in a truncated series of basis functions, and employ the stochastic gradient descent in continuous time to infer the coefficients of the expansion. Due to the incompatibility between the multiscale data and the homogenized model, the estimator alone is not able to reconstruct the exact drift. We therefore propose to filter the original trajectory through appropriate kernels and include filtered data in the stochastic differential equation for the estimator, which indeed solves the misspecification issue. Several numerical experiments highlight the accuracy of our approach. Moreover, we show theoretically in a simplified framework the asymptotic unbiasedness of our estimator in the limit of infinite data and when the multiscale parameter describing the fastest scale vanishes.
\end{abstract}

\subjclass{60H10, 60J60, 62F12, 62M05, 62M20}

\keywords{Filtered data, homogenization, multiscale diffusions, parameter estimation, stochastic gradient descent in continuous time.}

\maketitle

\section{Introduction}

Multiscale diffusion processes are widely employed in modeling natural phenomena. Some applications include oceanography \cite{CoP09,KPP15}, finance \cite{OSP10,AiJ14}, and molecular dynamics \cite{FrL12,LeS16}. It is often important to derive from data an effective model which describes the macroscopic behavior at the slowest scale \cite{AMZ05}. We focus here on multiscale stochastic differential equations (SDEs) of the Langevin type, which represents the motion of a particle in a multiscale potential, and for which a simplified surrogate model exists due to the theory of the homogenization \cite{PaS08}. Our goal is then to infer the drift coefficient of the effective equation given observations from the full dynamics. Since we are interested in learning the homogenized SDE from data, this problem is denoted as \emph{data-driven homogenization}.

It is well-known that data originating from the multiscale dynamics are not compatible with the coarse-grained model \cite{PaS07}, and this is a typical example of model misspecification. This problem has been studied in several works and tackled with different approaches \cite{PPS12,GaS17,GaS18}. In particular, it has been shown that the maximum likelihood estimator (MLE) is biased if data are not preprocessed. This issue can be solved by subsampling the data at an appropriate rate which lies in between the two characteristic time scales of the multiscale process \cite{PPS09,ABT10,ABJ13}. However, the accuracy of the estimation is strongly dependent on the subsampling rate, and the optimal rate is unknown in concrete applications. This has motivated the study of alternatives to subsampling. For example, in \cite{KPK13} a martingale property of the likelihood function is employed. Moreover, subsampling can be bypassed using a different methodology based on filtered data which has recently been introduced in \cite{AGP23,GaZ23}. Filtered data are constructed applying either an exponential kernel or a moving average, therefore smoothening the original trajectory. The filtered process is then used to modify the definition of the MLE, which results in a more robust estimator. In the case that the full path of the data is not available but only a sample of discrete-time observations is known, filtered data can be used in combination with martingale estimating functions based on the eigenvalues and eigenfunctions of the generator of the effective dynamics \cite{KeS99,APZ22}, for which homogenization results have been shown in \cite{Zan23}. In \cite{APZ22}, it is proved that, as long as the filter width is sufficiently large, the derived estimator is asymptotically unbiased independently of the regime at which the data are observed.

In this work we consider a different method for parameter estimation, i.e., the stochastic gradient descent in continuous time (SGDCT), which has been introduced and applied to financial problems in \cite{SiS17}. Differently from the stochastic gradient descent in discrete time, which has been thoroughly analyzed \cite{BMP90,BeT00,KuY03}, the continuous analogue applied to inference problems is rather new, and it has been studied in \cite{SiS17}, where the rate of convergence and a central limit theorem for the estimator are proved. SGDCT consists in solving an SDE for the estimator that is dependent on the model and the path of the observations, and therefore performing online updates. However, if the data are originated from a different dynamics, i.e., in this case the multiscale model insted of the homogenized dynamics, the estimator is not able to correctly retrieve the exact unknown due to the incompatibility between model and data, as already observed for the MLE. We therefore couple SGDCT together with the filtering methodology and build a novel estimator for learning the drift term of the homogenized equation from multiscale data. We remark that in \cite{PRZ24} a similar approach has been employed to estimate parameters in stochastic processes driven by colored noise.

The main contribution of our work is twofold. We show that SGDCT in combination with filtered data can be applied to infer drift coefficients in multiscale problems, and we prove theoretically in a particular setting that the resulting estimator is asymptotically unbiased in the limit of infinite data and vanishing fast-scale parameter. Moreover, we apply our estimators to the general framework of multiscale diffusion processes with nonseparable confining potential, where the amplitude of the fast-scale oscillations are dependent on the slow-scale component, and which lead to multiplicative noise in the homogenized equation \cite{DKP16}.

\vspace{2ex}
\textit{Outline.} The rest of the paper is organized as follows. In \cref{sec:setting} we introduce the problem and the multiscale model under consideration. In \cref{sec:method} we present our methodology based on SGDCT and filtered data, for which we study the convergence properties in a simplified setting in \cref{sec:analysis}. Finally, in \cref{sec:experiments} we show the potentiality of our estimator through several numerical experiments, and in \cref{sec:conclusion} we draw the conclusions.

\section{Problem setting} \label{sec:setting}

In this section we introduce the multiscale diffusion process which we study in this work. We focus on multiscale Langevin dynamics, which describes the motion of a particle in a multiscale landscape. Let us consider the following SDE in the time horizon $[0,T]$
\begin{equation} \label{eq:SDE_multiscale}
\d X_t^\epl = - \nabla \mathcal V^\epl(X_t^\epl; \alpha) \dd t + \sqrt{2\sigma} \dd W_t,
\end{equation}
where $X_t \in \R^d$, $\mathcal V^\epl \colon \R^d \to \R$ defines a multiscale confining potential depending on an unknown parameter $\alpha \in \R^m$ and on a parameter $\epl > 0$ that describes the fastest scale, $\sigma > 0$ is a constant diffusion coefficient, and $W_t \in \R^d$ is a standard multidimensional Brownian motion. We decompose the confining potential in its large-scale part and its zero-mean fluctuations as
\begin{equation}
\mathcal V^\epl(x; \alpha) = V(x; \alpha) + p \left( x, \frac{x}\epl \right),
\end{equation}
where $V \colon \R^d \to \R$ and $p \colon \R^{2d} \to \R$ are the slow-scale and fast-scale potentials, respectively, and we assume that the parameter $\alpha$ appears only in the first component. Moreover, the fast-scale potential is periodic in its second component with period the hypercube $\mathbb L = [0,L]^d$ with $L>0$, and it satisfies the centering condition
\begin{equation}
\int_{\mathbb L} p(x,y) \dd y = 0,
\end{equation}
which, defining $\mathcal V(x,y;\alpha) = V(x;\alpha) + p(x,y)$, implies
\begin{equation}
V(x;\alpha) = \frac1{L^d} \int_{\mathbb L} \mathcal V(x,y;\alpha) \dd y.
\end{equation}
The theory of homogenization (see, e.g., \cite[Chapter 9]{BLP78} and \cite{DKP16}) gives the existence of a homogenized SDE, which only captures the slow-scale components of the solution $X_t^\epl$ of the multiscale equation \eqref{eq:SDE_multiscale}, of the form
\begin{equation} \label{eq:SDE_homogenized}
\d X_t = - b(X_t; \alpha) \dd t + \sqrt{2 \Sigma(X_t)} \d W_t,
\end{equation}
such that the solution $X_t$ is the limit in law of $X_t^\epl$ as random variables in $\mathcal C^0([0,T]; \R^d)$. In particular, in equation \eqref{eq:SDE_homogenized} the homogenized drift and diffusion terms are given by $\Sigma(x) = \sigma \mathcal K(x)$ and
\begin{equation} \label{eq:drift_hom}
b(x; \alpha) = \mathcal K(x) \nabla V(x; \alpha) - \sigma \mathcal K(x) \nabla \log \left( \int_{\mathbb L} e^{-\frac1\sigma p(x,y)} \dd y \right) - \sigma \div(\mathcal K(x)),
\end{equation}
where $\mathcal K \colon \R^d \to \R^{d \times d}$ is defined as
\begin{equation}
\mathcal K(x) = \frac{\int_{\mathbb L} \left( I + \nabla_y \Phi(x,y) \right) e^{-\frac1\sigma p(x,y)} \dd y}{\int_{\mathbb L} e^{-\frac1\sigma p(x,y)} \dd y},
\end{equation}
and where, for fixed $x \in \R^d$, the function $\Phi \colon \R^{2d} \to \R^d$ is the unique solution to the cell problem in $\mathbb L$ 
\begin{equation} \label{eq:cell_problem}
\nabla_y \cdot \left( e^{-\frac1\sigma p(x,y)} \left( I + \nabla_y \Phi(x,y) \right) \right) = 0,
\end{equation}
endowed with periodic boundary conditions, and which satisfies the constraint 
\begin{equation}
\int_{\mathbb L} \Phi(x,y) e^{-\frac1\sigma p(x,y)} \dd y = 0. 
\end{equation}
We remark that similar computations have also been presented in \cite{DDP23} for an arbitrary number of scales, and in \cite{GoP18} for the more general setting of multiscale interacting particles.

\begin{remark}
In the one-dimensional case, equation \eqref{eq:cell_problem} can be solved analytically, and therefore we obtain the explicit formula for the function $\mathcal K$, which reads
\begin{equation} \label{eq:K1D}
\mathcal K(x) = \frac{L^2}{ \left( \int_0^L e^{-\frac1\sigma p(x,y)} \dd y \right) \left( \int_0^L e^{\frac1\sigma p(x,y)} \dd y \right)}.
\end{equation}
\end{remark}

Let us now introduce the framework that we will consider in the rest of the work, and in particular the assumptions on the confining potentials.

\begin{assumption} \label{as:potentials}
The slow-scale and fast-scale potentials $V$ and $p$ satisfy
\begin{enumerate}
\item \label{it:1} $V(\cdot; \alpha) \in \mathcal C^\infty(\R^d)$ is polynomially bounded, and there exist $\mathfrak a, \mathfrak b > 0$ such that
\begin{equation}
- \nabla V(x; \alpha) \cdot x \le \mathfrak a - \mathfrak b \norm{x}^2,
\end{equation} 
\item \label{it:2} $p \in \mathcal C^\infty(\R^{2d})$ is periodic in its second variable with period $\mathbb L$, and either there exists $R > 0$ such that $p(x,y) = 0$ if $\norm{x} > R$ or $p$ is independent of the first variable, i.e., by abuse of notation, $p(x,y) = p(y)$.
\end{enumerate}
\end{assumption}

We notice that the dissipativity condition in \cref{as:potentials}(\ref{it:1}) is important to ensure the ergodicity of the diffusion process. Moreover, \cref{as:potentials}(\ref{it:2}) is necessary to prevent the oscillations from growing when $\norm{x} \to \infty$. This condition is essential to apply the a priori estimates on the moments from \cite{ABM96} and showing the homogenization result \cite{DDP23}.

We are then interested in inferring the drift function $b$ of the homogenized equation \eqref{eq:SDE_homogenized} given a continuous path $(X_t^\epl)_{t \in [0,T]}$ from the corresponding multiscale SDE \eqref{eq:SDE_multiscale}. We remark that we only assume to know the form of the slow-scale potential $V$ (but not the parameter $\alpha$), while all the other quantities, i.e., the fast-scale potential $p$, the diffusion coefficient $\sigma$, and therefore the function $\mathcal K$, are unknown for us. So if, for example, $V$ was known to be a fourth degree polynomial with coefficients given by $\alpha \in \R^4$, we would not assume to know $\alpha$. The problem of inferring the drift of the homogenized equation, rather than the full multiscale dynamics, is important in particular for predictive purposes \cite[Remark 1.1]{GaZ23}, since numerical integration for multiscale SDEs is prohibitively expensive. In fact, the discretization step needs to be chosen sufficiently small with respect to $\epl$. This restriction is not necessary in homogenized SDEs where the time step is only based on the desired accuracy. In the next section we address the inference problem employing the SGDCT. However, since the data and model are incompatible, we also propose to include filtered data in the definition of the SGDCT in order to solve the issue of model misspecification. 

\section{Methodology} \label{sec:method}

We recall that our goal is to learn the drift function $b$ in equation \eqref{eq:SDE_homogenized} from multiscale data $(X_t^\epl)_{t \in [0,T]}$. From now on, we will focus on one-dimensional diffusion processes ($d=1$), but we believe that this method can be extended to the multidimensional case in a similar way as it is done for the MLE in \cite[Remark 2.5]{AGP23} and \cite[Section 3.3]{GaZ23}. We propose to follow a semiparametric approach. In particular, we first consider a truncated basis $\{ u_n \}_{n=1}^N$, $u_n \colon \R \to \R$, of an appropriate functional space, and approximate the drift term as
\begin{equation} \label{eq:expansion}
b(x; \alpha) \simeq \widetilde b(x; A) = \sum_{n=1}^N a_n u_n(x) = A^\top U(x),
\end{equation}
for some coefficients $a_n \in \R$, and where
\begin{equation}
U(x) = \begin{bmatrix} u_1(x) & u_2(x) & \cdots & u_N(x) \end{bmatrix}^\top \in \R^{N} \qquad \text{and} \qquad A = \begin{bmatrix} a_1 & a_2 & \cdots & a_N \end{bmatrix}^\top \in \R^N.
\end{equation}
The basis can be chosen, e.g., as the first $N$ monomials, i.e., $u_n(x) = x^{n-1}$, if we assume a Taylor expansion, or can be extracted from a family of orthogonal polynomials, or can be informed by the knowledge of the form of the slow-scale potential $V$. In particular, the form of the potential $V$ is the only guide in the choice of the basis functions. Otherwise, if no information is given, and assuming that the potential is regular enough, choosing monomials is the best option, which is justified by the Taylor expansion. Another interesting possibility would be to select polynomials that are orthonormal with respect to the invariant measure of the dynamics, but this would first require density estimation for the invariant measure. We plan to return to this problem in future work. Then, the problem moves to estimating the best coefficients $\{ a_n \}_{n=1}^N$ in \eqref{eq:expansion}, using estimators which are suitable for the standard parametric framework. We remark that the MLE cannot be computed explicitly as it depends on the diffusion term in \eqref{eq:SDE_homogenized}, which is also unknown. In this work, we therefore employ a different technique, i.e., the SGDCT estimator which has been used for parameter estimation in \cite{SiS17}, and does not rely on the diffusion coefficient.

\subsection{Application of SGDCT}\label{sec:application-of-sgdct}

In this section we introduce the methodology which we employ to estimate the drift coefficients in the approximation \eqref{eq:expansion}. In particular, we employ the SGDCT, which consists in solving an SDE for the unknown parameters. SGDCT follows a noisy descent direction along the path of the observations and therefore performs online updates. Following the notation of \cite{SiS17}, consider the natural objective function
\begin{equation}
\bar{g}(A) = \frac12 \E\left[\abs{b(X;\alpha) - \widetilde{b}(X; A)}^2\right] = \frac12 \E \left[ \abs{b(X;\alpha) - A^\top U(X)}^2 \right],
\end{equation}
and use stochastic gradient descent to approximate its minimum value with some learning rate $\eta_t$:
\begin{equation}
\d A_t = - \eta_t \nabla \bar{g}(A_t) \dd t = - \eta_t \left( A_t^\top U(X_t) - b(X_t;\alpha) \right) U(X_t) \dd t.
\end{equation}
We notice that, since $\alpha$ is unknown, the right-hand side cannot be computed explicitly, but it can be approximated using equation \eqref{eq:SDE_homogenized}
\begin{equation}
- b(X_t;\alpha) \dd t \simeq \dd X_t,
\end{equation}
where the noise is neglected. Therefore, we obtain
\begin{equation}
\d A_t = - \eta_t (U(X_t) \otimes U(X_t)) A_t \dd t - \eta_t U(X_t) \dd X_t,
\end{equation}
and an analogous estimator is obtained by replacing $X_t$ with the available multiscale data. Let us denote by $\widetilde A^\epl_T$ the SGDCT estimator of $A$ in \eqref{eq:expansion} given continuous-time observations $(X_t^\epl)_{t \in [0,T]}$. Then, $\widetilde A^\epl_T$ is the solution at the final time $T$ of the following SDE
\begin{equation} \label{eq:SDE_estimator}
\d \widetilde A^\epl_t = - \eta_t (U(X_t^\epl) \otimes U(X_t^\epl)) \widetilde A^\epl_t \dd t - \eta_t U(X_t^\epl) \dd X_t^\epl,
\end{equation}
where $\otimes$ denotes the outer product and $\eta_t$ is the learning rate which takes the form
\begin{equation}
\eta_t = \frac{\gamma}{\beta + t},
\end{equation}
for some $\gamma, \beta > 0$. As shown in \cite{SiS17,SiS20}, this method is suitable to approximate the exact coefficients if the data employed in the SDE \eqref{eq:SDE_estimator} are generated from the same equation used in the derivation of the SGDCT estimator. To be more precise, this technique would succeed in giving a reliable estimation of the coefficient $A$ if the data originated from the homogenized equation \eqref{eq:SDE_homogenized} where the drift term $b$ is replaced by its approximation $\widetilde b$. We notice that in this case the estimator $A_t$ defined above is independent of the parameter $\epl$ since it is based on the homogenized data $X_t$. In our setting, however, the drift term is not known exactly, but we use its approximation in terms of basis functions, and the data are generated from the multiscale equation \eqref{eq:SDE_multiscale}. We remark that, as long as the basis functions are chosen appropriately, the fact that we are looking for an approximation of the drift function does not strongly affect the inference procedure, as we will see in the numerical experiments. Nevertheless, the best possible approximation is clearly dependent on the choice of basis functions. On the other hand, even if the drift can be exactly expressed in terms of a finite number of basis functions, in the next section we observe that the SGDCT estimator $\widetilde A^\epl_T$ alone is not able to correctly infer the drift coefficient $A$ due to the incompatibility between multiscale data and homogenized model.

\subsection{Failure of SGDCT with multiscale data} \label{sec:failure}

Let us now show that if we do not preprocess the multiscale data, then the SGDCT cannot infer the drift function of the homogenized equation. This has already been studied for the MLE estimator in \cite{PaS07}, where the authors show that the estimator approximates the drift coefficient of the multiscale equation \cite[Theorem 3.4]{PaS07}. We consider here the same setting, where there is a clear separation between the slowest and the fastest scale. In particular, let the process $X_t^\epl$ be one-dimensional and consider the slow-scale potential
\begin{equation}
V(x) = \alpha v(x),
\end{equation}
where $\alpha > 0$ is the unknown parameter, and the fast-scale potential $p(x,y) = p(y)$ depends only on the fastest scale. In this case the function $\mathcal K$ in \eqref{eq:K1D} turns out to be constant and therefore the multiscale and homogenized equations \eqref{eq:SDE_multiscale} and \eqref{eq:SDE_homogenized} read
\begin{align}
\d X_t^\epl &= - \alpha v'(X_t^\epl) \dd t - \frac1\epl p' \left( \frac{X_t^\epl}\epl \right) \dd t + \sqrt{2\sigma} \dd W_t,  \label{eq:SDE_multiscale_1D} \\
\d X_t &= - A v'(X_t) \dd t + \sqrt{2\Sigma} \dd W_t,  \label{eq:SDE_homogenized_1D} \\
\end{align}
where $A = \mathcal K \alpha > 0$ and $\Sigma = \mathcal K \sigma > 0$. It follows that the only basis function that we need is $u_1(x) = v'(x)$ and thus the SDE for the estimator $\widetilde A^\epl_T$ of $A$ simplifies to
\begin{equation} \label{eq:SDE_estimator_1D}
\d \widetilde A^\epl_t = - \eta_t v'(X_t^\epl)^2 \widetilde A^\epl_t \dd t - \eta_t v'(X_t^\epl) \dd X_t^\epl.
\end{equation}
Since it will be useful in the following result, we remark here that in \cite[Propositions 5.1 and 5.2]{PaS07} it is proved that the processes $X_t^\epl$ and $X_t$ are geometrically ergodic with unique invariant measures. Then, in the next result we show that, even in this simple setting, the estimator is not able to approximate the drift coefficient $A$ of the homogenized equation \eqref{eq:SDE_homogenized_1D}, and, in contrast, it converges to the drift coefficient $\alpha$ of the multiscale equation \eqref{eq:SDE_multiscale_1D} in the limit of infinite time and when the multiscale parameter vanishes. 

\begin{proposition}
Let $\widetilde A^\epl_T$ be defined by the SDE \eqref{eq:SDE_estimator_1D}. Under \cref{as:potentials} it holds
\begin{equation}
\lim_{\epl \to 0} \lim_{T \to \infty} \widetilde A^\epl_T = \alpha, \qquad a.s.
\end{equation}
\end{proposition}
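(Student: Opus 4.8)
The plan is to substitute the multiscale dynamics \eqref{eq:SDE_multiscale_1D} into the estimator SDE \eqref{eq:SDE_estimator_1D} and integrate the resulting linear equation explicitly. With the integrating factor $h^\epl_t := \int_0^t \eta_s v'(X^\epl_s)^2 \dd s$ this gives
\begin{equation}
\begin{split}
\widetilde A^\epl_T = {} & e^{-h^\epl_T}\widetilde A^\epl_0 + \alpha\bigl(1-e^{-h^\epl_T}\bigr) \\
& + \int_0^T e^{-(h^\epl_T-h^\epl_s)}\eta_s v'(X^\epl_s)\tfrac1\epl p'(X^\epl_s/\epl)\dd s - \sqrt{2\sigma}\int_0^T e^{-(h^\epl_T-h^\epl_s)}\eta_s v'(X^\epl_s)\dd W_s,
\end{split}
\end{equation}
the second term coming from $\int_0^T e^{-(h^\epl_T-h^\epl_s)}\d h^\epl_s = 1-e^{-h^\epl_T}$. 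Using the geometric ergodicity of $X^\epl_t$ proved in \cite[Proposition 5.1]{PaS07}, whose invariant measure I denote by $\mu^\epl$, the ergodic theorem gives $h^\epl_T/\log T \to \gamma\int_\R v'(x)^2\,\mu^\epl(\d x) =: c_\epl > 0$ a.s., so $h^\epl_T \to \infty$; hence the first term vanishes and the second converges to $\alpha$. It then remains to control the stochastic integral and the oscillatory integral as $T\to\infty$, and afterwards to let $\epl\to0$.

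For the stochastic integral I would write it as $-\sqrt{2\sigma}\,e^{-h^\epl_T}M^\epl_T$ with $M^\epl_T := \int_0^T e^{h^\epl_s}\eta_s v'(X^\epl_s)\dd W_s$ a genuine continuous local martingale. Since $\d e^{2h^\epl_s} = 2e^{2h^\epl_s}\eta_s v'(X^\epl_s)^2\dd s$, an integration by parts gives $\langle M^\epl\rangle_T = \tfrac12\eta_T e^{2h^\epl_T} + \tfrac{\gamma}{2}\int_0^T (\beta+s)^{-2}e^{2h^\epl_s}\dd s + O(1)$, which, together with $h^\epl_s \sim c_\epl\log s$, is $o(e^{2h^\epl_T})$. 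The law of the iterated logarithm for continuous martingales (or direct convergence of $M^\epl_T$ together with $e^{-h^\epl_T}\to0$, in case $\langle M^\epl\rangle_\infty<\infty$) then yields $e^{-h^\epl_T}M^\epl_T \to 0$ a.s., so this term also drops out.

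The crux is the oscillatory term $R^\epl_T := \int_0^T e^{-(h^\epl_T-h^\epl_s)}\eta_s v'(X^\epl_s)\tfrac1\epl p'(X^\epl_s/\epl)\dd s$. The key point is that, for fixed $\epl$, $R^\epl_t$ itself solves the one-dimensional linear equation $\d R^\epl_t = -\eta_t v'(X^\epl_t)^2 R^\epl_t\dd t + \eta_t v'(X^\epl_t)\tfrac1\epl p'(X^\epl_t/\epl)\dd t$, $R^\epl_0 = 0$ — i.e.\ a decreasing-step stochastic approximation scheme ($\int_0^\infty\eta_t\dd t = \infty$, $\int_0^\infty\eta_t^2\dd t < \infty$) whose averaged field is affine with strictly negative slope and whose zero is $R^\epl_\star := \bigl(\int_\R v'(x)\tfrac1\epl p'(x/\epl)\,\mu^\epl(\d x)\bigr)\big/\bigl(\int_\R v'(x)^2\,\mu^\epl(\d x)\bigr)$. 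To prove $R^\epl_T\to R^\epl_\star$ a.s.\ I would set $g^\epl := v'\,\tfrac1\epl p'(\cdot/\epl)$, solve the Poisson equation $\mathcal L^\epl\psi^\epl = -\bigl(g^\epl - (v')^2 R^\epl_\star\bigr)$ for the generator $\mathcal L^\epl$ of $X^\epl$ (solvable with polynomially controlled growth thanks to geometric ergodicity and the a priori moment bounds of \cite{ABM96}), and then use Itô's formula and an integration by parts to show that $\int_0^T\eta_t\bigl(g^\epl(X^\epl_t) - v'(X^\epl_t)^2 R^\epl_\star\bigr)\dd t$ converges a.s.\ (this is where $\int\eta_t^2 < \infty$ and the moment bounds enter); since the weights $e^{-h^\epl_T}e^{h^\epl_s}\d h^\epl_s$ on $[0,T]$ have total mass $1-e^{-h^\epl_T}\to1$ and concentrate near $s=T$, this upgrades to $R^\epl_T\to R^\epl_\star$. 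I expect this weighted ergodic averaging to be the main obstacle, being exactly a convergence statement for SGDCT-type dynamics driven by the fast data. Granting it, $\lim_{T\to\infty}\widetilde A^\epl_T = \alpha + R^\epl_\star$ a.s., which coincides with the large-time limit of the MLE analyzed in \cite{PaS07}.

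Finally I would send $\epl\to0$ in the deterministic constant $R^\epl_\star$. Using $\mu^\epl(\d x)\propto e^{-\frac1\sigma(\alpha v(x)+p(x/\epl))}\,\d x$ and the identity $\tfrac1\epl p'(x/\epl)\,e^{-\frac1\sigma p(x/\epl)} = -\sigma\,\tfrac{\d}{\d x}e^{-\frac1\sigma p(x/\epl)}$, an integration by parts — whose boundary terms vanish because the dissipativity in \cref{as:potentials}(\ref{it:1}) forces $v\to\infty$ at $\pm\infty$ — recasts the numerator of $R^\epl_\star$ as a fixed multiple of
\begin{equation}
\int_\R v'(x)\tfrac1\epl p'(x/\epl)\,e^{-\frac1\sigma(\alpha v(x)+p(x/\epl))}\,\d x = \sigma\int_\R\tfrac{\d}{\d x}\!\bigl(v'(x)e^{-\frac1\sigma\alpha v(x)}\bigr)\,e^{-\frac1\sigma p(x/\epl)}\,\d x.
\end{equation}
As $\epl\to0$ the rapidly oscillating periodic factor $e^{-\frac1\sigma p(x/\epl)}$ averages to its mean $\tfrac1L\int_0^L e^{-\frac1\sigma p(y)}\dd y$, so the right-hand side tends to a multiple of $\int_\R\tfrac{\d}{\d x}\bigl(v'(x)e^{-\frac1\sigma\alpha v(x)}\bigr)\,\d x = 0$, while the normalizing constant $\int_\R e^{-\frac1\sigma(\alpha v+p(\cdot/\epl))}\,\d x$ and the denominator $\int_\R v'(x)^2\,\mu^\epl(\d x)$ converge to strictly positive limits. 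Hence $R^\epl_\star\to0$, and therefore $\lim_{\epl\to0}\lim_{T\to\infty}\widetilde A^\epl_T = \alpha$ a.s.; in particular the estimator recovers the drift coefficient of the multiscale equation \eqref{eq:SDE_multiscale_1D}, not that of the homogenized one.
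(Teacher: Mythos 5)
Your proposal is correct in substance, but it takes a genuinely different and much more self-contained route than the paper. The paper's proof is short: it observes that \eqref{eq:SDE_estimator_1D} is exactly SGDCT applied to the misspecified model $\widetilde b(x;A)=-Av'(x)$ for the multiscale drift, invokes \cite[Theorem 3]{SiS20} to get $\lim_{T\to\infty}|\bar g'(\widetilde A^\epl_T)|=0$ a.s., solves the resulting affine equation to obtain $\lim_{T\to\infty}\widetilde A^\epl_T=\alpha+\E\bigl[\tfrac1\epl p'(X^\epl/\epl)v'(X^\epl)\bigr]/\E[v'(X^\epl)^2]$, and then cites the proof of \cite[Theorem 3.4]{PaS07} for the vanishing of this ratio as $\epl\to0$. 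You reach exactly the same intermediate identity, but by integrating the linear SDE explicitly and carrying out the pathwise asymptotics yourself: the $\alpha(1-e^{-h^\epl_T})$ term, a quadratic-variation/LIL bound for the stochastic integral, and a Poisson-equation plus Abel--Toeplitz weighted-averaging argument for the oscillatory term $R^\epl_T$ --- i.e.\ you re-derive the relevant special case of \cite[Theorem 3]{SiS20}, in the same spirit as the paper's own Section~4 treatment of the filtered estimator. Likewise, your final integration by parts under the Gibbs measure $\mu^\epl\propto e^{-(\alpha v+p(\cdot/\epl))/\sigma}$ together with the weak convergence of the rapidly oscillating factor to its cell average is precisely the computation behind the cited step of \cite{PaS07}, and your conclusion $R^\epl_\star\to0$ is the correct one (note the contrast with the filtered case of \cref{lem:magic_formula}, where the analogous quantity scaled by $1/\epl$ has a nonzero limit). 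What your route buys is transparency: an explicit formula for the asymptotic bias $R^\epl_\star$ and a visible convergence mechanism at every step, plus a rate-friendly structure. What it costs is that several of your sketched steps are genuinely technical on the unbounded state space and would have to be carried out in full: solvability of the Poisson equation $\mathcal L^\epl\psi^\epl$ on all of $\R$ with polynomial growth control (Pardoux--Veretennikov-type results, which the paper only needs on the torus), almost-sure growth control of $\psi^\epl(X^\epl_t)$ and of $h^\epl_T-h^\epl_s$ to make both the $e^{-h^\epl_T}M^\epl_T\to0$ argument and the exponential-weight averaging rigorous, and the justification that $\E[v'(X^\epl)^2]>0$ (which follows from the dissipativity in \cref{as:potentials}). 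These are exactly the ingredients that the paper's two citations package up, so your argument is a valid alternative provided those technical steps are completed.
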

\begin{proof}
Notice that solving the SDE \eqref{eq:SDE_estimator_1D} is equivalent to applying the SGDCT to the multiscale equation \eqref{eq:SDE_multiscale_1D} so that we learn the unknown coefficient $A$ in the function $\widetilde{b}(x; A) := -Av'(x)$ which is used to approximate the target function
\begin{equation}
b(x; \alpha) := - \frac{ \partial \mathcal V^\epl}{\partial x} (x; \alpha) = - \alpha v'(x) - \frac1\epl p' \left( \frac{x}\epl \right).
\end{equation}
Similar to the intuitive derivation of SGDCT at the beginning of Section \ref{sec:application-of-sgdct} and now employing the notation in \cite{SiS20}, let us now define the objective function $g$ for SGDCT as
\begin{equation}
g(x; A) = \frac12\abs{\widetilde{b}(x; A) - b(x;\alpha)}^2 = \frac12 \abs{- A v'(x) + \alpha v'(x) + \frac1\epl p' \left( \frac{x}\epl \right)}^2.
\end{equation}
By \cite[Theorem 3]{SiS20} it follows that 
\begin{equation} \label{eq:fail_limit_dg}
\lim_{T \to \infty} \abs{\bar g'(\widetilde A^\epl_T)} = 0, \qquad a.s., 
\end{equation}
where $\bar g(a) = \E [g(X^\epl; a)]$ and the expectation is computed with respect to the invariant measure of the multiscale process $X_t^\epl$. We then have
\begin{equation}
\bar g'(\widetilde A^\epl_T) = \E \left[ \frac{\partial g}{\partial a} (X^\epl; \widetilde A^\epl_T) \right]  = \widetilde A^\epl_T \E[v'(X^\epl)^2] - \alpha \E[v'(X^\epl)^2] - \E \left[ \frac1\epl p' \left( \frac{X^\epl}\epl \right) v'(X^\epl) \right],
\end{equation}
which due to \eqref{eq:fail_limit_dg} implies
\begin{equation} \label{eq:fail_limit_T}
\lim_{T \to \infty} \widetilde A^\epl_T = \alpha + \frac{\E \left[ \frac1\epl p' \left( \frac{X^\epl}\epl \right) v'(X^\epl) \right]}{\E[v'(X^\epl)^2]}.
\end{equation}
Finally, from the proof of \cite[Theorem 3.4]{PaS07} we know that
\begin{equation}
\lim_{\epl \to 0} \frac{\E \left[ \frac1\epl p' \left( \frac{X^\epl}\epl \right) v'(X^\epl) \right]}{\E[v'(X^\epl)^2]} = 0,
\end{equation}
which together with equation \eqref{eq:fail_limit_T} yields the desired result.
\end{proof}

Therefore, in the next section we show how to modify the SGDCT in order to accurately fit the homogenized equation.

\subsection{Coupling SGDCT with filtered data}

In the previous section we showed that the SGDCT fails in inferring parameters in homogenized equations given observations originating from the multiscale dynamics. Inspired by the works \cite{AGP23,GaZ23}, where filtered data are used to correct the faulty behavior of MLE estimators, we now propose to employ the same type of filtered data to modify the SGDCT and consequently obtain unbiased estimations. Let $\delta > 0$ be a parameter describing the filter width, and consider the following smoothed versions of the original multiscale data
\begin{equation} \label{eq:filtered_data}
Z^{\delta, \epl}_{\mathrm{exp}, t} =  \frac1\delta \int_0^t e^{- \frac{t-s}{\delta}} X_s^\epl \dd s \qquad \text{and} \qquad Z^{\delta, \epl}_{\mathrm{ma}, t} = \frac1\delta \int_{t-\delta}^t X_s^\epl \dd s,
\end{equation}
where $Z^{\delta, \epl}_{\mathrm{exp}, t}$ is obtained applying a filtering kernel of the exponential family and $Z^{\delta, \epl}_{\mathrm{ma}, t}$ is a moving average. To simplify the notation, from now on we will only write $Z^\epl_t$ and we will refer to both as the filtered data in equation \eqref{eq:filtered_data}. However, if it is necessary to specify the type, then we will use the full notation. For completeness, let us also introduce the homogenized counterpart of the filtered data
\begin{equation} \label{eq:filtered_data_hom}
Z^\delta_{\mathrm{exp}, t} =  \frac1\delta \int_0^t e^{- \frac{t-s}{\delta}} X_s \dd s \qquad \text{and} \qquad Z^\delta_{\mathrm{ma}, t} = \frac1\delta \int_{t-\delta}^t X_s \dd s,
\end{equation}
which will be employed later and only denoted by $Z_t$. We are now ready to include filtered data in the definition of the SGDCT estimator. In particular, inspired by the previous works \cite{AGP23,GaZ23,APZ22}, we replace only one instance of the original trajectory $X_t^\epl$ with the filtered data $Z_t^\epl$ both in the drift and diffusion terms of the SDE \eqref{eq:SDE_estimator}. Our novel estimator, which we denote as $\widehat A^\epl_T$, is then the solution at the final time $T$ of the modified SDE
\begin{equation} \label{eq:SDE_estimator_filter}
\d \widehat A^\epl_t = - \eta_t (U(Z_t^\epl) \otimes U(X_t^\epl)) \widetilde A^\epl_t \dd t - \eta_t U(Z_t^\epl) \dd X_t^\epl,
\end{equation}
We notice that, as highlighted in \cite[Remark 3.7]{AGP23}, due to the regularity of the filtered process $Z_t^\epl$, it is fundamental to keep the differential of the multiscale process $\d X_t^\epl$ in the definition of the estimator. Then, in order to obtain the asymptotic unbiasedness, by symmetry, it follows that the original process must also appear in the drift term of the SDE \eqref{eq:SDE_estimator_filter}. This intuition will become clearer in the following convergence analysis.

\begin{remark}
The computation of the estimator $\widehat A^\epl_T$ from equation \eqref{eq:SDE_estimator_filter} requires neither the knowledge of the diffusion coefficient $\sigma$ of the multiscale dynamics \eqref{eq:SDE_multiscale} nor the diffusion term $\Sigma$ of the homogenized equation \eqref{eq:SDE_homogenized}. However, their values affect the variance of the resulting estimator, and this can be observed, e.g., in the central limit theorem proved in \cite[Theorem 2.8]{SiS20}.
\end{remark}

Finally, from the expansion \eqref{eq:expansion} we can construct the approximated drift function of the homogenized equation which is given by
\begin{equation}
\widetilde b(x; \widehat A^\epl_T) = \sum_{n=1}^N (\widehat A^\epl_T)_n u_n(x).
\end{equation}
In the numerical experiments in \cref{sec:experiments} we will show that the estimated drift function $\widetilde b$ is able to provide a reliable approximation of the exact drift function $b$. Moreover, in the next section we focus on a particular framework, and develop a rigorous theoretical analysis which allows us to prove the asymptotic unbiasedness of the estimator $\widehat A^\epl_T$ in that context.

\section{Convergence analysis} \label{sec:analysis}

In this section we consider a simplified setting and prove theoretically the accuracy of our estimator in the limit of infinite data ($T \to \infty$) and when the multiscale parameter vanishes ($\epl \to 0$). Let us consider the same framework as in \cref{sec:failure}, which we recall here for clarity. We aim to infer the drift coefficient $A > 0$ in the homogenized equation
\begin{equation} \label{eq:SDE_homogenized_proof}
\d X_t = - A v'(X_t) \dd t + \sqrt{2\Sigma} \dd W_t,
\end{equation}
given continuous-time observations $(X_t^\epl)_{t \in [0,T]}$ from the corresponding multiscale dynamics
\begin{equation} \label{eq:SDE_multiscale_proof} 
\d X_t^\epl = - \alpha v'(X_t^\epl) \dd t - \frac1\epl p' \left( \frac{X_t^\epl}\epl \right) \dd t + \sqrt{2\sigma} \dd W_t.
\end{equation}
We employ the SGDCT estimator $\widehat A^\epl_T$ with filtered data which is the solution at the final time of the SDE
\begin{equation} \label{eq:SDE_estimator_proof}
\d \widehat A^\epl_t = - \eta_t v'(Z_t^\epl) v'(X_t^\epl) \widehat A^\epl_t \dd t - \eta_t v'(Z_t^\epl) \dd X_t^\epl.
\end{equation}
Substituting the expression for $\d X_t^\epl$ in \eqref{eq:SDE_multiscale_proof} into that of $\d \widehat A^\epl_t$ in \eqref{eq:SDE_estimator_proof} and using $\d (\widehat A^\epl_t - \alpha) = \d \widehat A^\epl_t$ gives
\begin{equation}
\d (\widehat A^\epl_t - \alpha) = - \eta_t v'(Z_t^\epl) v'(X_t^\epl) (\widehat A^\epl_t - \alpha) \dd t + \frac{\eta_t}\epl v'(Z_t^\epl) p' \left( \frac{X_t^\epl}\epl \right) \dd t - \sqrt{2\sigma} \eta_t v'(Z_t^\epl) \dd W_t,
\end{equation}
which is a linear SDE for the term $\widehat A^\epl_t - \alpha$. Therefore, letting $A_0 > 0$ be the initial value for the estimator, we can write the solution explicitly and obtain the analytical form of the estimator
\begin{equation} \label{eq:estimator_proof}
\begin{aligned}
\widehat A^\epl_T  &= \alpha + (A_0 - \alpha) e^{- \int_0^T \eta_s v'(Z_s^\epl) v'(X_s^\epl) \dd s} - \sqrt{2\sigma} \int_0^T \eta_t e^{- \int_t^T \eta_s v'(Z_s^\epl) v'(X_s^\epl) \dd s} v'(Z_t^\epl) \dd W_t \\
&\quad + \frac1\epl \int_0^T \eta_t e^{- \int_t^T \eta_s v'(Z_s^\epl) v'(X_s^\epl) \dd s} v'(Z_t^\epl) p' \left( \frac{X_t^\epl}\epl \right) \dd t. 
\end{aligned}
\end{equation}
Our goal is then to show the asymptotic unbiasedness of $\widehat A^\epl_T$, i.e., its convergence to the exact coefficient $A$ when $T$ and $\epl$ tend to infinity and zero, respectively. We remark that the following analysis relies on the additional assumption that the state space is compact, i.e., we embed the multiscale diffusion process $X_t^\epl$ in the one-dimensional torus, considering the wrapping of an Euclidean diffusion process, as detailed in \cite{GSM19}. The setting in which our analysis is performed is summarized in \cref{as:proof} below for the clarity of the exposition.

\begin{assumption} \label{as:proof}
The following conditions hold.
\begin{enumerate}
\item \label{it:P1} The dimensions of the processes and of the unknown parameters are $d=1$ and $N=1$, respectively.
\item \label{it:P2} The slow-scale potential is $V(x; \alpha) = \alpha v(x)$, where $\alpha > 0$ and $v$ is $\mathcal C^\infty(\R)$ and periodic.
\item \label{it:P3} The filtered data $Z_t^\epl$ and $Z_t$ are obtained employing the exponential kernel as in equations \eqref{eq:filtered_data} and \eqref{eq:filtered_data_hom}.
\item \label{it:P4} The fast-scale potential $p  \in \mathcal C^\infty(\R)$ is only dependent on the fast component and is periodic with period $L > 0$.
\item \label{it:P5} $\E[v'(X) v'(Z)] > 0$, where the expectation is computed with respect to the invariant measure of the joint homogenized process $(X_t, Z_t)$, which is introduced at the beginning of \cref{sec:preliminary}.
\item \label{it:P6} The homogenized and multiscale diffusion processes are embedded in the one-dimensional torus and therefore the state space is compact. 
\item \label{it:P7} The learning rate has the form $\eta_t = \gamma/(\beta + t)$ for some $\gamma, \beta > 0$, and $\gamma$ is chosen sufficiently large such that $\gamma \E[v'(X) v'(Z)] > 1$.
\end{enumerate}
\end{assumption}

In this context we can now present the asymptotic unbiasedness of our estimator, which is the main result of this section and whose proof is outlined below.

\begin{theorem} \label{thm:main}
Let $\widehat A^\epl_T$ be defined in equation \eqref{eq:estimator_proof}. Under \cref{as:proof} and if the filter width $\delta$ is independent of $\epl$ or $\delta = \epl^\xi$ with $\xi \in (0,2)$, i.e., if $\delta$ is sufficiently large, it holds
\begin{equation}
\lim_{\epl \to 0} \lim_{T \to \infty} \widehat A^\epl_T = A, \qquad \text{in } L^2.
\end{equation}
\end{theorem}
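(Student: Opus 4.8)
The plan is to analyze the explicit representation \eqref{eq:estimator_proof} term by term, taking first $T \to \infty$ and then $\epl \to 0$. Write $\widehat A_T^\epl = \alpha + \mathrm{I}_T^\epl + \mathrm{II}_T^\epl + \mathrm{III}_T^\epl$, where $\mathrm{I}$ is the transient term carrying $A_0 - \alpha$, $\mathrm{II}$ is the stochastic integral against $\d W_t$, and $\mathrm{III}$ is the $\epl^{-1}$-weighted drift correction. The point is that $\alpha + \lim_{\epl\to0}\lim_{T\to\infty}\mathrm{III}_T^\epl$ should recover $A = \mathcal K \alpha$: the filtered factor $v'(Z_t^\epl)$ no longer resonates destructively with the fast oscillation $\tfrac1\epl p'(X_t^\epl/\epl)$ the way the unfiltered $v'(X_t^\epl)$ did in \cref{sec:failure}, and instead produces exactly the homogenization correction. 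The key preliminary input (presumably established in \cref{sec:preliminary}) is that the joint process $(X_t^\epl, Z_t^\epl)$ — and its homogenized limit $(X_t, Z_t)$ — is geometrically ergodic on the compact state space, with the pair $(X_t^\epl, Z_t^\epl)$ converging in law to $(X_t,Z_t)$ as $\epl\to0$, together with quantitative (e.g. $L^2$) control of the fast variable $X_t^\epl/\epl$ and its invariant measure.

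\textbf{Step 1: the transient and martingale terms vanish.} For $\mathrm{I}_T^\epl$, condition \eqref{it:P7} gives $\gamma\,\E[v'(X)v'(Z)] > 1$; combined with the ergodic theorem applied to $s\mapsto \eta_s v'(Z_s^\epl) v'(X_s^\epl)$ and the form $\eta_s = \gamma/(\beta+s)$, the exponent $\int_0^T \eta_s v'(Z_s^\epl)v'(X_s^\epl)\dd s$ grows like $(\gamma\,\E[v'(X^\epl)v'(Z^\epl)])\log T \to +\infty$ faster than $\log T$, so $\mathrm{I}_T^\epl \to 0$ a.s.\ and in $L^2$ (using compactness to bound $v'$). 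For $\mathrm{II}_T^\epl$, I would compute its second moment via Itô isometry, $\E[(\mathrm{II}_T^\epl)^2] = 2\sigma\int_0^T \eta_t^2\, \E\!\big[e^{-2\int_t^T \eta_s v'(Z_s^\epl)v'(X_s^\epl)\dd s} v'(Z_t^\epl)^2\big]\dd t$, and bound the conditional exponential using the ergodic lower bound on the exponent on $[t,T]$; the resulting integral behaves like $\int_0^T \gamma^2(\beta+t)^{-2}\big(\tfrac{\beta+t}{\beta+T}\big)^{2\gamma\E[v'v']}\dd t$, which tends to $0$ as $T\to\infty$ because $2\gamma\E[v'(X)v'(Z)] > 2 > 1$. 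This is a standard SGDCT-type estimate and mirrors the convergence proof in \cite{SiS17,SiS20}; the compact state space makes all the moment bounds uniform in $\epl$.

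\textbf{Step 2: identify the limit of the drift-correction term.} This is the heart of the matter. After letting $T\to\infty$, one expects $\lim_{T\to\infty}\mathrm{III}_T^\epl = \dfrac{\E\!\big[\tfrac1\epl p'(X^\epl/\epl)\, v'(Z^\epl)\big]}{\E[v'(X^\epl)v'(Z^\epl)]}$, the fraction being taken under the joint invariant measure of $(X^\epl, Z^\epl)$ — this requires a careful argument since the integrand in $\mathrm{III}$ is not simply an additive functional but is coupled to the running exponential weight; one route is to first show $\widehat A_T^\epl$ itself converges (as in \cref{sec:failure}, via $\lim_T |\bar g'(\widehat A_T^\epl)| = 0$ for the appropriate averaged objective, now written for the pair $(X,Z)$) and then read off the limit as the stationary point. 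Then I would pass $\epl\to0$: the denominator converges to $\E[v'(X)v'(Z)] > 0$ by \eqref{it:P5} and the weak convergence $(X^\epl,Z^\epl)\Rightarrow(X,Z)$. For the numerator, the filter width condition $\delta$ fixed or $\delta=\epl^\xi$, $\xi\in(0,2)$, is exactly what is needed: one writes $\tfrac1\epl p'(X_t^\epl/\epl)\dd t = -\d X_t^\epl - \alpha v'(X_t^\epl)\dd t + \sqrt{2\sigma}\dd W_t$ and, using the regularity of $Z^\epl$, integrates by parts / uses the generator of the fast process so that $\E[\tfrac1\epl p'(X^\epl/\epl) v'(Z^\epl)]$ converges to the homogenized quantity $(\mathcal K - 1)\alpha\,\E[v'(X)v'(Z)]$ — so that $\alpha + \lim\mathrm{III} = \alpha + (\mathcal K - 1)\alpha = \mathcal K\alpha = A$. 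I expect this identification — showing the weighted fast average produces precisely the factor $(\mathcal K-1)\alpha$ and controlling the cross term between the fast oscillation and the smoothed $v'(Z^\epl)$ uniformly enough to exchange the two limits — to be the main obstacle, and the place where the hypothesis $\xi < 2$ (so that $\delta$ does not shrink faster than the slow diffusive scale $\epl^2$ of the homogenization) is consumed.

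\textbf{Step 3: assemble.} Combining Steps 1–2, $\lim_{\epl\to0}\lim_{T\to\infty}\widehat A_T^\epl = \alpha + (\mathcal K-1)\alpha = A$ in $L^2$, where the $L^2$ (as opposed to a.s.) mode of convergence is dictated by the martingale-term estimate in Step 1 and the need to control moments uniformly in $\epl$ via compactness of the torus. The role of filtering is thus transparent: replacing one copy of $X_t^\epl$ by $Z_t^\epl$ changes the spurious limit $\alpha + 0$ of the plain SGDCT (the Proposition in \cref{sec:failure}) into the correct limit $\mathcal K\alpha$, because the filtered variable decorrelates from the fast scale at leading order while retaining the homogenization correction at the next order.
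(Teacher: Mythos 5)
Your skeleton coincides with the paper's: the same three-term decomposition of \eqref{eq:estimator_proof}, the same identification of $\lim_{T\to\infty}$ of the drift-correction term as the stationary ratio $\E[v'(Z^\epl)p'(X^\epl/\epl)]/\E[v'(Z^\epl)v'(X^\epl)]$, and the same final passage $\epl\to0$ through the filtered-data homogenization limit (the paper's \cref{lem:magic_formula}, imported from \cite{AGP23}). The genuine gap is in the technical heart, namely the $L^2$ statements for the exponential weights. You cannot deduce a bound of the type $\E\bigl[\exp\bigl(-2\int_t^T\eta_s v'(Z_s^\epl)v'(X_s^\epl)\dd s\bigr)\bigr]\le C\bigl((\beta+t)/(\beta+T)\bigr)^{2\gamma\E[v'(Z^\epl)v'(X^\epl)]}$ from ``the ergodic theorem'' plus an ``ergodic lower bound on the exponent'': the ergodic theorem controls time averages almost surely, not the expectation of the exponential of their fluctuations, and since $v'(Z^\epl)v'(X^\epl)$ can be negative the exponential is not uniformly bounded, so a.s.\ convergence cannot be upgraded to $L^2$ by domination. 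The paper's proofs of \cref{lem:Q1,lem:Q2,lem:Q3} hinge on a missing idea in your sketch: decomposing $\int\eta_r v'(Z_r^\epl)v'(X_r^\epl)\dd r$ via the Poisson problem for the hypoelliptic generator of the joint process $(X_t^\epl,Z_t^\epl)$ on the torus (\cref{lem:decomposition_poisson,lem:decomposition_poisson_2}), so that the fluctuation enters only through bounded terms and a stochastic integral, which is then absorbed into an exponential martingale with bounded quadratic variation ($\E[M_T/M_t]=1$). One also needs \eqref{eq:as_epsilon}, i.e.\ that $\gamma\E[v'(X^\epl)v'(Z^\epl)]>1$ for $\epl$ small, secured by the weak convergence $\mu^\epl\to\mu$, before any of these rates make sense; \cref{as:proof}(\ref{it:P7}) alone concerns the homogenized measure.

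Your alternative route for the third term, identifying its limit as a stationary point via $|\bar g'(\widehat A_T^\epl)|\to0$ as in \cref{sec:failure}, is also problematic: the filtered recursion \eqref{eq:SDE_estimator_proof} is not gradient descent for an objective of the form covered by \cite{SiS20} (the factor $v'(Z_t^\epl)v'(X_t^\epl)$ is not a square and can change sign), so \cite[Theorem 3]{SiS20} does not apply off the shelf, and it would in any case yield almost sure rather than $L^2$ information. The paper instead exploits the one-dimensional linear structure and proves \cref{lem:Q3} directly, splitting off the deterministic exponential, controlling the difference with the Poisson/martingale machinery, and treating the resulting weighted ergodic average with the second corrector $\tilde\psi$ of \cref{lem:decomposition_poisson_2}. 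Finally, your $\epl\to0$ step correctly locates both the limit $\E[v'(Z^\epl)p'(X^\epl/\epl)]/(\epl\,\E[v'(Z^\epl)v'(X^\epl)])\to A-\alpha$ and the place where the hypothesis on $\delta$ (fixed, or $\delta=\epl^\xi$ with $\xi\in(0,2)$) is consumed, but the integration-by-parts heuristic is not a proof; in the paper this is exactly \cref{lem:magic_formula}, deferred to \cite[Theorems 3.12 and 3.18]{AGP23}. In short: right architecture, but the two pillars — $L^2$ control of the exponential weights via the Poisson equation and exponential martingales, and the filtered homogenization limit — are asserted rather than proved, and the substitute argument you give for the first would fail as stated.
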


\begin{remark}
 Even if \cref{thm:main} is proved in the particular setting described by \cref{as:proof}, the following analysis highlights the importance of filtered data in correcting the path of the SGDCT and therefore giving an unbiased estimator. We also remark that, as showcased by the numerical experiments, our methodology works even in more complex scenarios. Hence, we believe that the theoretical results could be extended to more general classes of problems. However, we will not study here this extension and we leave it for future work. Nevertheless, even if more general results are out of the scope of this work, we present the main issues that we would encounter and ideas on how we might tackle them. First, in the multidimensional case we could not compute the analytical solution in \eqref{eq:estimator_proof}, and we would need to use arguments similar to the ones in the proof of \cite[Theorem 1]{SiS20}, such as the comparison theorem \cite[Theorem 1.1]{IkW77}. Then, performing the analysis in the whole space, rather than the compact torus, would require the study of the regularity and boundedness of the solution of Poisson problems for hypoelliptic operators in unbounded domains, by extending the series of works \cite{PaV01,PaV03,PaV05}. Replacing exponentially filtered data with averaged data would lead to the analysis of the generator of stochastic delay differential equations (see \cite{GaZ23}). Finally, considering fast-scale potentials depending on both scales would imply the need of the expansion \eqref{eq:expansion}, and therefore the study of the error committed in approximating the drift with the truncated series expansion.
\end{remark}

In the next two sections we first introduce technical results related to the homogenization and the generator of the multiscale diffusion process, and then present the proof of the asymptotic unbiasedness of the SGDCT estimator $\widehat A^\epl_t$ with filtered data.

\subsection{Preliminary results} \label{sec:preliminary}

Let us recall some useful properties of the joint multiscale and homogenized processes $(X_t^\epl, Z_t^\epl)$ and $(X_t, Z_t)$, that has been studied in \cite{AGP23}. In particular, the joint process $(X_t^\epl, Z_t^\epl)$ is hypoelliptic (\cite[proof of Lemma 3.2]{AGP23}), ergodic, and admits a unique invariant measure $\mu^\epl$ (\cite[Lemma 3.3]{AGP23}). Moreover, $\mu^\epl$ converges in law to the unique invariant measure $\mu$ of the homogenized joint process $(X_t, Z_t)$ by \cite[Lemma 3.9]{AGP23}. Therefore, it follows that there exists $\epsilon_0 > 0$ such that for all $\epl < \epsilon_0$ \cref{as:proof}(\ref{it:P5},\ref{it:P7}) hold also for the invariant measure of the multiscale process, namely
\begin{equation} \label{eq:as_epsilon}
\E[v'(X^\epl) v'(Z^\epl)] > 0 \qquad \text{and} \qquad \gamma \E[v'(X^\epl) v'(Z^\epl)] > 1.
\end{equation}
The next lemma is fundamental for the proof of \cref{thm:main}. 

\begin{lemma}  \label{lem:magic_formula}
Under \cref{as:proof}, if the filter width $\delta$ is independent of $\epl$ or $\delta = \epl^\xi$ with $\xi \in (0,2)$, i.e., if $\delta$ is sufficiently large, it holds
\begin{equation}
\lim_{\epl \to 0} \frac{\E \left[ v'(Z^\epl) p' \left( \frac{X^\epl}\epl \right) \right]}{\epl \E \left[ v'(Z^\epl) v'(X^\epl) \right]} = A - \alpha,
\end{equation}
where the expectations are computed with respect to the invariant measure of the joint process $(X_t^\epl, Z_t^\epl)$, and $A$ and $\alpha$ are the drift coefficients of the homogenized and multiscale SDEs \eqref{eq:SDE_homogenized_proof} and \eqref{eq:SDE_multiscale_proof}, respectively.
\end{lemma}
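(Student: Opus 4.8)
The strategy is to rewrite both expectations in \cref{lem:magic_formula} in terms of the generator of the joint stationary process $(X_t^\epl, Z_t^\epl)$ and to extract the singular term $\epl^{-1}p'(X^\epl/\epl)$ by means of the one-dimensional cell-problem corrector. Let $\mathcal L_0 := \sigma\partial_y^2 - p'(y)\partial_y$ be the generator of the fast process and $\Phi$ the periodic solution of $\mathcal L_0\Phi = p'$, which exists because $\int_0^L p'(y)e^{-p(y)/\sigma}\dd y = 0$; this $\Phi$ is precisely the corrector of \eqref{eq:cell_problem}, and explicitly $1+\Phi'(y) = \big(L\big/\!\int_0^L e^{p(\eta)/\sigma}\dd\eta\big)\,e^{p(y)/\sigma}$. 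Denote by $\mathcal L^\epl$ the generator of \eqref{eq:SDE_multiscale_proof} and by $\mathcal L_{\mathrm{jt}}^\epl := \mathcal L^\epl + \delta^{-1}(x-z)\partial_z$ the generator of $(X_t^\epl, Z_t^\epl)$. Applying the stationarity identity $\E[\mathcal L_{\mathrm{jt}}^\epl\psi]=0$ — legitimate because the state space is compact by \cref{as:proof}(\ref{it:P6}), so that the smooth functions used here belong to the domain — to $\psi(x,z)=\epl\Phi(x/\epl)v'(z)$, and using $\mathcal L_0\Phi=p'$ to combine the $\epl^{-1}$ terms, gives the exact identity
\begin{equation}\label{eq:plan_key}
\frac1\epl\,\E\big[v'(Z^\epl)\,p'(X^\epl/\epl)\big] = \alpha\,\E\big[v'(X^\epl)\,v'(Z^\epl)\,\Phi'(X^\epl/\epl)\big] - \frac\epl\delta\,\E\big[\Phi(X^\epl/\epl)\,v''(Z^\epl)\,(X^\epl-Z^\epl)\big];
\end{equation}
since the noise acts only on $X^\epl$, no mixed second-order term arises in $\mathcal L_{\mathrm{jt}}^\epl\psi$. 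Dividing \eqref{eq:plan_key} by $\E[v'(X^\epl)v'(Z^\epl)]$, it then suffices to handle the two expectations on the right.

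The last term of \eqref{eq:plan_key} is bounded in absolute value by $(\epl/\delta)\,\norm{\Phi}_\infty\norm{v''}_\infty\,\E[\abs{X^\epl-Z^\epl}]$, so the key input is the bound $\E[(X^\epl-Z^\epl)^2]\le C\delta$, uniform in $\epl$, which holds precisely in the regime $\delta\gtrsim\epl^2$ (hence for $\delta$ fixed and for $\delta=\epl^\xi$ with $\xi\le 2$). To obtain it, note that for the stationary process $X_t^\epl - Z_t^\epl = \int_{-\infty}^t e^{-(t-s)/\delta}\dd X_s^\epl$; the Brownian and slow-drift parts are $O(\sqrt\delta)$ and $O(\delta)$ in $L^2$, and the fast-drift part $\int_{-\infty}^t e^{-(t-s)/\delta}\epl^{-1}p'(X_s^\epl/\epl)\dd s$ is rewritten — via Itô's formula applied to $\epl\Phi(X_s^\epl/\epl)$ and an integration by parts against the exponential kernel — as the sum of an $O(\epl)$ boundary term, an $O(\delta)$ Riemann term, and an $O(\sqrt\delta)$ stochastic integral; these are essentially the a priori estimates of \cite{AGP23}. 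Consequently the last term of \eqref{eq:plan_key} is $O(\epl/\sqrt\delta)$, i.e. $O(\epl^{1-\xi/2})$ when $\delta=\epl^\xi$ and $O(\epl)$ when $\delta$ is fixed, and it vanishes as $\epl\to0$ in both cases.

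For the first term of \eqref{eq:plan_key}, decompose $\Phi'=\overline{\Phi'}+\widetilde\Phi'$, where $\overline{\Phi'}:=\big(\int_0^L e^{-p/\sigma}\dd y\big)^{-1}\int_0^L\Phi'(y)e^{-p(y)/\sigma}\dd y$ is chosen so that $\widetilde\Phi'$ has zero mean against the fast invariant density $\propto e^{-p/\sigma}$. A direct computation with the formula for $\Phi'$ and with \eqref{eq:K1D} gives $\overline{\Phi'}=\mathcal K-1$. For the oscillatory remainder, let $\theta$ solve $\mathcal L_0\theta=\widetilde\Phi'$ (solvable by the centering) and apply $\E[\mathcal L_{\mathrm{jt}}^\epl(\cdot)]=0$ to $F^\epl(x,z):=\epl^2\theta(x/\epl)v'(x)v'(z)$: the $\epl^0$-part of $\mathcal L^\epl F^\epl$ equals $v'(x)\widetilde\Phi'(x/\epl)v'(z)$, the remaining terms of $\mathcal L^\epl F^\epl$ are $O(\epl)$, and the $z$-component of $\mathcal L_{\mathrm{jt}}^\epl$ contributes $O(\epl^2/\delta)$, all of which vanish; hence $\E[v'(X^\epl)v'(Z^\epl)\widetilde\Phi'(X^\epl/\epl)]\to0$. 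Therefore $\E[v'(X^\epl)v'(Z^\epl)\Phi'(X^\epl/\epl)]=(\mathcal K-1)\,\E[v'(X^\epl)v'(Z^\epl)]+o(1)$.

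Putting everything together, the ratio in \cref{lem:magic_formula} equals $\alpha(\mathcal K-1)+o(1)\big/\E[v'(X^\epl)v'(Z^\epl)]$. Since $\mu^\epl$ converges in law to $\mu$ (recalled at the start of \cref{sec:preliminary}) and $v'$ is bounded and continuous on the torus, $\E[v'(X^\epl)v'(Z^\epl)]\to\E[v'(X)v'(Z)]>0$ by \cref{as:proof}(\ref{it:P5}), so the remainder goes to $0$ and the limit is $\alpha(\mathcal K-1)=\mathcal K\alpha-\alpha=A-\alpha$, using $A=\mathcal K\alpha$. I expect the main obstacle to be the uniform-in-$\epl$ second-moment estimate $\E[(X^\epl-Z^\epl)^2]=O(\delta)$ down to $\delta=\epl^\xi$ with $\xi$ arbitrarily close to $2$: it is this bound that produces the threshold $\xi=2$ and that forces the fast drift to be treated through the corrector rather than bounded directly. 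Verifying that every function fed to the stationarity identity lies in the domain of $\mathcal L_{\mathrm{jt}}^\epl$ is then routine given compactness.
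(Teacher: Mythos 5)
Your argument is correct, but it is a genuinely different route from what the paper does: the paper's proof of \cref{lem:magic_formula} is a pure citation, deferring entirely to the proofs of Theorems 3.12 and 3.18 of \cite{AGP23}, where this ratio limit is established inside the asymptotic-unbiasedness analysis of the filtered-data MLE (the two theorems covering, respectively, $\delta$ independent of $\epl$ and $\delta=\epl^\xi$ with $\xi\in(0,2)$). You instead give a self-contained derivation: the stationarity identity $\E[\mathcal L_{\mathrm{jt}}^\epl\psi]=0$ applied to $\psi=\epl\Phi(x/\epl)v'(z)$, with $\Phi$ the corrector of \eqref{eq:cell_problem} satisfying $\sigma\Phi''-p'\Phi'=p'$, correctly converts the singular expectation into $\alpha\,\E[v'(X^\epl)v'(Z^\epl)\Phi'(X^\epl/\epl)]$ plus an $\mathcal O(\epl/\delta)\,\E\abs{X^\epl-Z^\epl}$ remainder; the weighted average $\overline{\Phi'}=\mathcal K-1$ computed from \eqref{eq:K1D} is right, the second corrector $\theta$ with $F^\epl=\epl^2\theta(x/\epl)v'(x)v'(z)$ kills the fluctuating part up to $\mathcal O(\epl)+\mathcal O(\epl^2/\delta)$, and the moment bound $\E[(X^\epl-Z^\epl)^2]\le C(\delta+\epl^2)$ (obtained exactly as you sketch, and essentially an a priori estimate already in \cite{AGP23}) yields the remainder $\mathcal O(\epl/\sqrt\delta)$, which is where the threshold $\xi=2$ genuinely comes from; combined with $\E[v'(X^\epl)v'(Z^\epl)]\to\E[v'(X)v'(Z)]>0$ this gives the limit $\alpha(\mathcal K-1)=A-\alpha$. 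What your approach buys is transparency — the value of the limit and the admissible range of $\xi$ are read off directly from two corrector identities — whereas the paper's citation buys brevity and avoids redoing the technical estimates of \cite{AGP23}. Two minor points to make your write-up airtight: the functions $\Phi(x/\epl)$, $\theta(x/\epl)$ are well defined on the torus only under the usual commensurability of $\epl L$ with the torus period, an assumption implicit in the paper's whole setting (\cref{as:proof}(\ref{it:P6})); and the representation $X_t^\epl-Z_t^\epl=\int_{-\infty}^t e^{-(t-s)/\delta}\dd X_s^\epl$ refers to the stationary version of the filter, which is legitimate here since all expectations are taken under the invariant measure $\mu^\epl$.
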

\begin{proof}
The desired result follows from the proofs of \cite[Theorem 3.12]{AGP23} and \cite[Theorem 3.18]{AGP23}.
\end{proof}

We now focus on the definition of the estimator \eqref{eq:estimator_proof}. The next result is a decomposition of the exponent of the exponential term, and is based on the Poisson problem for the generator of the joint process $(X_t^\epl, Z_t^\epl)$.

\begin{lemma} \label{lem:decomposition_poisson}
Under \cref{as:proof}, there exists a function $\psi = \psi(x,z)$, which is bounded along with its derivatives, such that the following decomposition holds true for all $s,t \in [0,T]$
\begin{equation}
\begin{aligned}
\int_s^t \eta_r v'(Z_r^\epl) v'(X_r^\epl) \dd r &= \eta_t \psi(X_t^\epl, Z_t^\epl) - \eta_s \psi(X_s^\epl, Z_s^\epl) + \int_s^t \eta_r \dd r \E[v'(Z^\epl) v'(X^\epl)] \\
&\quad - \int_s^t \eta_r' \psi(X_r^\epl, Z_r^\epl) \dd r - \sqrt{2\sigma} \int_s^t \eta_r \frac{\partial}{\partial x} \psi(X_r^\epl, Z_r^\epl) \dd W_r.
\end{aligned}
\end{equation}
\end{lemma}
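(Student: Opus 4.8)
The plan is a corrector (Poisson-equation) argument followed by an application of It\^o's formula. First I would write down the generator of the joint multiscale process $(X_t^\epl, Z_t^\epl)$. Since the exponentially filtered trajectory satisfies $\d Z_t^\epl = \tfrac1\delta(X_t^\epl - Z_t^\epl)\dd t$, the joint generator acting on a smooth $\varphi = \varphi(x,z)$ is
\[
\mathcal L^\epl \varphi = \left( -\alpha v'(x) - \tfrac1\epl p'\!\left(\tfrac x\epl\right)\right)\partial_x\varphi + \sigma\,\partial_{xx}\varphi + \tfrac1\delta (x-z)\,\partial_z\varphi.
\]
As recalled at the beginning of \cref{sec:preliminary}, $\mathcal L^\epl$ is hypoelliptic (the bracket of $\partial_x$ with the drift recovers $\partial_z$, so H\"ormander's condition holds) and the process is ergodic on the compact state space with unique invariant measure $\mu^\epl$.

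Next I would solve the Poisson equation
\[
\mathcal L^\epl \psi(x,z) = v'(z) v'(x) - \E\!\left[ v'(Z^\epl) v'(X^\epl) \right],
\]
where the expectation is taken with respect to $\mu^\epl$. The right-hand side is smooth, bounded, and centered with respect to $\mu^\epl$, which is exactly the solvability condition, so hypoelliptic regularity theory on the compact state space provides a solution $\psi$ (which depends on $\epl$, suppressed in the notation as in the statement) that is $\mathcal C^\infty$ and bounded together with all of its derivatives. For the downstream use of this lemma in the proof of \cref{thm:main} one in fact needs these bounds to be uniform in $\epl$: this is the delicate point, since the drift of $X_t^\epl$ carries the singular oscillatory term $\tfrac1\epl p'(\cdot/\epl)$, so uniformity is not automatic and has to be extracted from the uniform-in-$\epl$ geometric ergodicity of the joint process (cf. \cite{PaS07,AGP23}) together with standard a priori estimates for Poisson problems associated to hypoelliptic generators.

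Finally, I would apply It\^o's formula to $r \mapsto \eta_r \psi(X_r^\epl, Z_r^\epl)$. Because $\eta_r$ is deterministic and differentiable and $Z_r^\epl$ has no martingale part,
\[
\d\!\left( \eta_r \psi(X_r^\epl, Z_r^\epl) \right) = \eta_r'\, \psi(X_r^\epl, Z_r^\epl)\dd r + \eta_r\, \mathcal L^\epl\psi(X_r^\epl, Z_r^\epl)\dd r + \sqrt{2\sigma}\,\eta_r\, \partial_x\psi(X_r^\epl, Z_r^\epl)\dd W_r.
\]
Replacing $\mathcal L^\epl\psi$ by the right-hand side of the Poisson equation, integrating over $[s,t]$, and rearranging to isolate $\int_s^t \eta_r v'(Z_r^\epl) v'(X_r^\epl)\dd r$ produces exactly the claimed decomposition, with the mean term $\int_s^t \eta_r \dd r\,\E[v'(Z^\epl)v'(X^\epl)]$, the boundary terms $\eta_t\psi(X_t^\epl,Z_t^\epl) - \eta_s\psi(X_s^\epl,Z_s^\epl)$, the drift-of-$\eta$ term $-\int_s^t\eta_r'\psi\dd r$, and the It\^o correction $-\sqrt{2\sigma}\int_s^t\eta_r\partial_x\psi\dd W_r$.

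The main obstacle is therefore the well-posedness and, above all, the (ideally $\epl$-uniform) boundedness of the corrector $\psi$ and its derivatives, due to the singular term in the drift; everything else is the routine It\^o computation above.
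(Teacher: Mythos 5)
Your proposal is correct and follows essentially the same route as the paper: write down the hypoelliptic generator $\mathcal L^\epl$ of the joint process $(X_t^\epl,Z_t^\epl)$, solve the Poisson problem $\mathcal L^\epl\psi^\epl = v'(z)v'(x)-\E[v'(Z^\epl)v'(X^\epl)]$ on the torus with a centering condition (the paper invokes \cite[Theorem 4.1]{MST10} for existence and boundedness of $\psi^\epl$ together with its derivatives), and then apply It\^o's formula to $\eta_r\psi^\epl(X_r^\epl,Z_r^\epl)$ and rearrange. The only inaccuracy is your claim that the downstream use requires bounds uniform in $\epl$: since \cref{thm:main} takes the iterated limit $\lim_{\epl\to 0}\lim_{T\to\infty}$, the constants in \cref{lem:Q1,lem:Q2,lem:Q3} are allowed to depend on $\epl$, so the fixed-$\epl$ bounds you already obtain are all that is needed.
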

\begin{proof}
Let us consider the system of SDEs for the joint process 
\begin{equation}
\begin{aligned}
\d X_t^\epl &= - \alpha v'(X_t^\epl) \dd t - \frac1\epl p' \left( \frac{X_t^\epl}\epl \right) \dd t + \sqrt{2\sigma} \dd W_t, \\
\d Z_t^\epl &= - \frac1\delta (Z_t^\epl - X_t^\epl) \dd t,
\end{aligned}
\end{equation}
which is obtained differentiating the first equation in \eqref{eq:filtered_data}, and its generator
\begin{equation}
\mathcal L^\epl = - \alpha v'(x) \frac{\partial}{\partial x} - \frac1\epl p' \left( \frac{x}\epl \right) \frac{\partial}{\partial x} - \frac1\delta (z-x)  \frac{\partial}{\partial z} + \sigma  \frac{\partial^2}{\partial x^2},
\end{equation}
which is hypoelliptic due to \cite[Lemma 3.2]{AGP23}.
Then, take the function 
\begin{equation}
\phi^\epl(x,z) = v'(z) v'(x) - \E [v'(Z^\epl) v'(X^\epl)],
\end{equation}
where the expectation is computed with respect to the invariant measure $\mu^\epl$, and consider the Poisson problem on the two-dimensional torus $\mathbb T^2$ with a centering condition
\begin{equation} \label{eq:poisson_problem}
\begin{aligned}
\mathcal L^\epl \psi^\epl &= \phi^\epl, \\
\int \psi^\epl \dd \mu^\epl &= 0,
\end{aligned}
\end{equation}
that has a unique solution, which is bounded along with its derivatives, by \cite[Theorem 4.1]{MST10}. Applying Itô's lemma to the function $h^\epl(r,x,z) = \eta_r \psi^\epl(x,z)$ we have
\begin{equation}
\eta_t \psi(X_t^\epl, Z_t^\epl) - \eta_s \psi(X_s^\epl, Z_s^\epl) = \int_s^t \eta_r' \psi(X_r^\epl, Z_r^\epl) \dd r + \sqrt{2\sigma} \int_s^t \eta_r \frac{\partial}{\partial x} \psi(X_r^\epl, Z_r^\epl) \dd W_r + \int_s^t \mathcal L^\epl \psi^\epl(X_r^\epl, Z_r^\epl) \dd W_r.
\end{equation}
Finally, using the fact that $\psi^\epl$ solves the Poisson problem \eqref{eq:poisson_problem} and rearranging the terms we obtain the desired result.
\end{proof}

By an analogous argument, we also obtain the following lemma.
\begin{lemma} \label{lem:decomposition_poisson_2}
Under \cref{as:proof}, there exists a function $\tilde\psi = \tilde\psi(x,z)$, which is bounded along with its derivatives, such that the following decomposition holds true for all $s,t \in [0,T]$
\begin{equation}
\begin{aligned}
\int_s^t \zeta_r v'(Z_r^\epl) p'\left(\frac{X_r^\epl}{\epl}\right) \dd r &= \zeta_t \tilde\psi(X_t^\epl, Z_t^\epl) - \zeta_s \tilde\psi(X_s^\epl, Z_s^\epl) + \int_s^t \zeta_r \dd r \E\left[v'(Z^\epl) p'\left(\frac{X^\epl}{\epl}\right)\right] \\
&\quad - \int_s^t \zeta_r' \tilde\psi(X_r^\epl, Z_r^\epl) \dd r - \sqrt{2\sigma} \int_s^t \zeta_r \frac{\partial}{\partial x} \tilde\psi(X_r^\epl, Z_r^\epl) \dd W_r,
\end{aligned}
\end{equation}
where $\zeta_r$ is defined as
\begin{equation}
\zeta_r := (\beta+r)^{\gamma\E[v'(Z^\epl)v'(X^\epl)]-1}.
\end{equation}
\end{lemma}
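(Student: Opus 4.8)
\emph{Proof proposal.} The plan is to reproduce the argument of \cref{lem:decomposition_poisson} almost verbatim, replacing the source function $v'(z)v'(x)$ by $v'(z)p'(x/\epl)$ and the deterministic weight $\eta_r$ by $\zeta_r$. First I would recall the joint system for $(X_t^\epl, Z_t^\epl)$ obtained by differentiating the exponential filter in \eqref{eq:filtered_data}, together with its hypoelliptic generator $\mathcal L^\epl$, exactly as in the previous lemma. The new object is the centered source
\begin{equation}
\tilde\phi^\epl(x,z) = v'(z)\, p'\left(\frac{x}{\epl}\right) - \E\left[v'(Z^\epl)\, p'\left(\frac{X^\epl}{\epl}\right)\right],
\end{equation}
where the expectation is taken against the invariant measure $\mu^\epl$, so that $\int \tilde\phi^\epl \dd\mu^\epl = 0$ by construction. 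Since $v$ and $p$ are $\mathcal C^\infty$ and periodic, $\tilde\phi^\epl$ is smooth on the torus $\mathbb T^2$, and the same well-posedness result \cite[Theorem 4.1]{MST10} used before furnishes a unique solution $\tilde\psi^\epl$ of the Poisson problem $\mathcal L^\epl\tilde\psi^\epl = \tilde\phi^\epl$ with $\int\tilde\psi^\epl\dd\mu^\epl=0$, bounded together with its derivatives.

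Next I would apply It\^o's lemma to $h^\epl(r,x,z) = \zeta_r\tilde\psi^\epl(x,z)$ along the joint diffusion. Because $\zeta_r$ is deterministic and $Z_t^\epl$ carries no martingale part, the only stochastic integral comes from the $x$-component, whose quadratic variation is $2\sigma\dd r$, yielding
\begin{equation}
\d\left(\zeta_r\tilde\psi^\epl\right) = \zeta_r'\,\tilde\psi^\epl\dd r + \zeta_r\,\mathcal L^\epl\tilde\psi^\epl\dd r + \sqrt{2\sigma}\,\zeta_r\,\frac{\partial}{\partial x}\tilde\psi^\epl\dd W_r.
\end{equation}
Integrating from $s$ to $t$, substituting $\mathcal L^\epl\tilde\psi^\epl = \tilde\phi^\epl$, and solving for $\int_s^t\zeta_r v'(Z_r^\epl)p'(X_r^\epl/\epl)\dd r$ reproduces the claimed identity: the boundary terms $\zeta_t\tilde\psi^\epl - \zeta_s\tilde\psi^\epl$ and the martingale $-\sqrt{2\sigma}\int_s^t\zeta_r\,\partial_x\tilde\psi^\epl\dd W_r$ appear directly, the centering constant produces the $\int_s^t\zeta_r\dd r\,\E[\,\cdot\,]$ term, and the $\partial_r$ contribution of It\^o's formula gives $-\int_s^t\zeta_r'\,\tilde\psi^\epl\dd r$.

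Since every step is parallel to \cref{lem:decomposition_poisson}, I do not expect a genuine obstacle; the only points demanding care are that the source be correctly centered so the Poisson problem is solvable, and that $\tilde\psi^\epl$ and $\partial_x\tilde\psi^\epl$ be bounded for the It\^o application to be legitimate, both guaranteed on the compact torus by \cite[Theorem 4.1]{MST10}. I would flag, however, that unlike the source in the previous lemma the term $p'(x/\epl)$ oscillates on the fast scale, so the bounds on $\tilde\psi^\epl$, and hence on the boundary and drift remainder terms, genuinely depend on $\epl$. The statement claims only the existence of a bounded $\tilde\psi$ for each fixed $\epl$, and the $\epl$-uniform control needed later is deferred to the subsequent convergence analysis; this is the natural place where the restriction $\delta = \epl^\xi$ with $\xi \in (0,2)$ will ultimately enter.
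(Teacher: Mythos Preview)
Your proposal is correct and is precisely the approach the paper takes: the paper itself offers no separate proof for this lemma beyond the sentence ``By an analogous argument, we also obtain the following lemma,'' referring back to \cref{lem:decomposition_poisson}. Your sketch spells out that analogy accurately, and your closing remark on the $\epl$-dependence of the bounds for $\tilde\psi^\epl$ is a sensible observation that goes slightly beyond what the paper states at this point.
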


In the next section we employ these technical results to prove the main convergence theorem.

\subsection{Proof of the main theorem}

Let us recall the definition of the SGDCT estimator $\widehat A^\epl_T$ in \eqref{eq:estimator_proof} and consider its decomposition
\begin{equation} \label{eq:estimator_decomposition}
\widehat A^\epl_T = \alpha + Q^{\epl, (1)}_T + Q^{\epl, (2)}_T + Q^{\epl, (3)}_T,
\end{equation}
where
\begin{equation}
\begin{aligned}
Q^{\epl, (1)}_T &= (A_0 - \alpha) e^{- \int_0^T \eta_s v'(Z_s^\epl) v'(X_s^\epl) \dd s}, \\
Q^{\epl, (2)}_T &= - \sqrt{2\sigma} \int_0^T \eta_t e^{- \int_t^T \eta_s v'(Z_s^\epl) v'(X_s^\epl) \dd s} v'(Z_t^\epl) \dd W_t, \\
Q^{\epl, (3)}_T &= \frac1\epl \int_0^T \eta_t e^{- \int_t^T \eta_s v'(Z_s^\epl) v'(X_s^\epl) \dd s} v'(Z_t^\epl) p' \left( \frac{X_t^\epl}\epl \right) \dd t. 
\end{aligned}
\end{equation}
In the next lemmas we study the limit as $T \to \infty$ of the three quantities separately. Then, we collect the results together to conclude the proof of \cref{thm:main}.

\begin{lemma} \label{lem:Q1}
Under \cref{as:proof}, it holds 
\begin{equation}
\lim_{T \to \infty} e^{- \int_0^T \eta_s v'(Z_s^\epl) v'(X_s^\epl) \dd s} = 0, \qquad \text{in } L^2.
\end{equation}
\end{lemma}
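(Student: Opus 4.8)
The plan is to control the exponent $\int_0^T \eta_s v'(Z_s^\epl) v'(X_s^\epl)\dd s$ from below by a quantity that grows fast enough to make the exponential vanish in $L^2$. The natural tool is \cref{lem:decomposition_poisson} with $s=0$: it rewrites this integral as
\begin{equation*}
\int_0^T \eta_r v'(Z_r^\epl) v'(X_r^\epl)\dd r = \E[v'(Z^\epl)v'(X^\epl)]\int_0^T \eta_r\dd r + R_T^\epl,
\end{equation*}
where the remainder $R_T^\epl = \eta_T\psi(X_T^\epl,Z_T^\epl) - \eta_0\psi(X_0^\epl,Z_0^\epl) - \int_0^T \eta_r'\psi(X_r^\epl,Z_r^\epl)\dd r - \sqrt{2\sigma}\int_0^T \eta_r \partial_x\psi(X_r^\epl,Z_r^\epl)\dd W_r$ collects the boundary, drift-of-$\eta$, and martingale terms. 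With $\eta_r=\gamma/(\beta+r)$ one has $\int_0^T\eta_r\dd r = \gamma\log\frac{\beta+T}{\beta} = \gamma\log(\beta+T) + O(1)$, so the leading term is $\gamma\E[v'(Z^\epl)v'(X^\epl)]\log(\beta+T) + O(1)$, which by \cref{as:proof}(\ref{it:P7}) (more precisely \eqref{eq:as_epsilon}) has coefficient strictly larger than $1$. Hence formally $e^{-\int_0^T\eta_s v'(Z_s^\epl)v'(X_s^\epl)\dd s} \lesssim (\beta+T)^{-\gamma\E[v'(Z^\epl)v'(X^\epl)]}e^{-R_T^\epl}$, and the polynomial decay beats everything.

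The remaining steps are to show $e^{-R_T^\epl}$ does not spoil this. First, since $\psi$ is bounded, the boundary term $\eta_T\psi(X_T^\epl,Z_T^\epl) - \eta_0\psi(X_0^\epl,Z_0^\epl)$ is bounded uniformly in $T$ (and in $\omega$), so it contributes only a harmless multiplicative constant. Second, $\eta_r' = -\gamma/(\beta+r)^2$ is integrable on $[0,\infty)$, and $\psi$ is bounded, so $\int_0^T \eta_r'\psi(X_r^\epl,Z_r^\epl)\dd r$ is bounded uniformly in $T$ and $\omega$ as well. The only genuinely stochastic piece is the Itô integral $M_T := \sqrt{2\sigma}\int_0^T \eta_r\partial_x\psi(X_r^\epl,Z_r^\epl)\dd W_r$; since $\partial_x\psi$ is bounded (say by $C_\psi$) and $\eta_r^2$ is integrable, $M_T$ is an $L^2$-bounded martingale with quadratic variation $\langle M\rangle_\infty \le 2\sigma C_\psi^2\int_0^\infty\eta_r^2\dd r =: \Theta < \infty$. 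Then I would estimate, using that $e^{-M_T}$ combined with the deterministic bounds gives
\begin{equation*}
\E\!\left[e^{-2\int_0^T\eta_s v'(Z_s^\epl)v'(X_s^\epl)\dd s}\right] \le C\,(\beta+T)^{-2\gamma\E[v'(Z^\epl)v'(X^\epl)]}\,\E\!\left[e^{-2M_T}\right],
\end{equation*}
and bound $\E[e^{-2M_T}] = \E[\exp(-2M_T - 2\langle M\rangle_T)\exp(2\langle M\rangle_T)] \le e^{2\Theta}\,\E[\mathcal E(-2M)_T] = e^{2\Theta}$, where $\mathcal E(-2M)$ is the Doléans–Dade exponential (a true martingale by Novikov, since $\langle M\rangle$ is bounded). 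This yields $\E[e^{-2\int_0^T\eta_s v'(Z_s^\epl)v'(X_s^\epl)\dd s}] \le C e^{2\Theta}(\beta+T)^{-2\gamma\E[v'(Z^\epl)v'(X^\epl)]} \to 0$ as $T\to\infty$, since the exponent is negative.

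The main obstacle is the apparent circularity: \cref{lem:decomposition_poisson} produces the exponent as $\E[v'(Z^\epl)v'(X^\epl)]\log(\beta+T)$ plus remainder, but the constant $\E[v'(Z^\epl)v'(X^\epl)]$ is only guaranteed positive (and with $\gamma$ times it exceeding $1$) for $\epl < \epsilon_0$ by \eqref{eq:as_epsilon} — so the statement as phrased ("under \cref{as:proof}") should really be read for $\epl$ small enough, which is the regime of \cref{thm:main} anyway. A secondary technical point is making the pathwise bound $e^{-\int_0^T\eta_s v'(Z_s^\epl)v'(X_s^\epl)\dd s}\le (\text{const})\cdot(\beta+T)^{-\gamma\E[\cdots]}e^{-M_T}$ rigorous uniformly in $T$; this is where one uses that the non-martingale parts of $R_T^\epl$ are bounded uniformly in $\omega$, which in turn relies on the compactness of the torus state space (\cref{as:proof}(\ref{it:P6})) together with the boundedness of $\psi$ and its derivatives. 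Once these are in place the $L^2$ convergence is immediate from the displayed expectation bound.
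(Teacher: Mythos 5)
Your proposal is correct and follows essentially the same route as the paper's proof: decompose the exponent via \cref{lem:decomposition_poisson}, absorb the bounded boundary and $\int\eta_r'\psi$ terms into a constant, and control the stochastic integral by comparison with a Dol\'eans--Dade exponential martingale of expectation one, so that the polynomially decaying factor coming from $\E[v'(X^\epl)v'(Z^\epl)]\int_0^T\eta_r\dd r$ (with positivity guaranteed for $\epl$ small by \eqref{eq:as_epsilon}, exactly as the paper uses it) forces the $L^2$ norm to zero. The only cosmetic discrepancy is a sign slip in your martingale term (it enters as $e^{+M_T}$ rather than $e^{-M_T}$), which is immaterial since your Novikov/exponential-martingale bound is symmetric in the sign of $M$.
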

\begin{proof}
First, letting $\psi$ be given by \cref{lem:decomposition_poisson}, we have
\[
    \left|\eta_T \psi(X_T^\epl, Z_T^\epl) - \eta_0\psi(X_0^\epl, Z_0^\epl)\right| \le \sup |\psi| (\eta_T + \eta_0) \le C < \infty
\]
for some $C>0$ which will change throughout the proof. Furthermore,
\[
    \left|\int_0^T \eta_r' \psi(X_r^\epl, Z_r^\epl)\dd{r}\right| \le \int_0^T |\eta_r'|\dd{r}\cdot\sup|\psi|\le C < \infty,
\]
and
\[
    \E\left[\left(\int_0^T \eta_r\frac{\partial}{\partial x}\psi(X_r^\epl, Z_r^\epl)\dd{W_r}\right)\right] = \E\left[\int_0^T \eta_r^2\left(\frac{\partial}{\partial x}\psi(X_r^\epl, Z_r^\epl)\right)^2\dd{r}\right] \le C\int_0^T \eta_r^2\dd{r} < \infty,
\]
where we have used the It\^{o} isometry and the fact that partial derivatives of $\psi$ are bounded. Using these bounds, we obtain
\begin{equation}
\begin{split}
    &\E\left[\exp\left(-\int_0^T \eta_sv'(Z_s^\epl)v'(X_s^\epl)\dd{s}\right)^2\right]\\
    &\le C\E\left[\exp\left(-2\E[v'(X^\epl)v'(Z^\epl)]\int_0^T\eta_r\dd{r} + 2\sqrt{2\sigma}\int_0^T \eta_r\frac{\partial}{\partial x}\psi(X_r^\epl,Z_r^\epl)\dd{W_r}\right)\right]\\
    &= C\exp\left(-2\E[v'(X^\epl)v'(Z^\epl)]\int_0^T\eta_r\dd{r}\right)\E\left[M_T\exp\left(4\sigma\int_0^T \eta_r^2\left(\frac{\partial}{\partial x} \psi(X_r^\epl, Z_r^\epl)\right)^2\dd{r}\right)\right],
\end{split}
\end{equation}
where
\begin{equation} \label{eq:martingale}
    M_T := \exp\left(2\sqrt{2\sigma}\int_0^T \eta_r \frac{\partial}{\partial x} \psi(X_r^\epl, Z_r^\epl) \dd{W_r} - 4\sigma \int_0^T \eta_r^2 \left(\frac{\partial}{\partial x} \psi(X_r^\epl, Z_r^\epl)\right)^2\dd{r}\right)
\end{equation}
is an exponential martingale. Using the fact that $M_T$ is a martingale and that $\int_0^T \eta_r^2\left(\frac{\partial}{\partial x} \psi(X_r^\epl, Z_r^\epl)\right)^2\dd{r} < \infty$ since the partial derivatives of $\psi$ are bounded, we then have
\begin{equation}
\begin{split}
    \E\left[\exp\left(-\int_0^T \eta_sv'(Z_s^\epl)v'(X_s^\epl)\dd{s}\right)^2\right] &\le C\exp\left(-2\E[v'(X^\epl)v'(Z^\epl)]\int_0^T\eta_r\dd{r}\right) \E[M_T]\\ 
    &= C\exp\left(-2\E[v'(X^\epl)v'(Z^\epl)]\int_0^T\eta_r\dd{r}\right),
\end{split}
\end{equation}
which converges to $0$ since $E[v'(X^\epl)v'(Z^\epl)] > 0$ by \eqref{eq:as_epsilon} and $\int_0^T \eta_r\dd{r} \to \infty$.
\end{proof}

\begin{lemma} \label{lem:Q2}
Under \cref{as:proof}, it holds 
\begin{equation}
\lim_{T \to \infty} \int_0^T \eta_t e^{- \int_t^T \eta_s v'(Z_s^\epl) v'(X_s^\epl) \dd s} v'(Z_t^\epl) \dd W_t = 0, \qquad \text{in } L^2.
\end{equation}
\end{lemma}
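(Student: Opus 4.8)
The plan is to exploit the variation-of-constants structure already present in \eqref{eq:estimator_proof}. Writing $Y_t := \int_0^t \eta_s v'(Z_s^\epl) v'(X_s^\epl)\,\dd s$, the quantity to be controlled is
\[
R_T := \int_0^T \eta_t e^{-(Y_T - Y_t)} v'(Z_t^\epl)\,\dd W_t = e^{-Y_T}\,\bar N_T, \qquad \bar N_T := \int_0^T \eta_t e^{Y_t} v'(Z_t^\epl)\,\dd W_t,
\]
the $\mathcal F_T$-measurable, $t$-independent factor $e^{-Y_T}$ having been pulled out of the integral. The point of this splitting is that $t \mapsto \bar N_t$ has an \emph{adapted} integrand and, since $\E[\langle \bar N\rangle_T] < \infty$ for each finite $T$ (by the bounds below), is a true square-integrable martingale, so the Burkholder--Davis--Gundy inequality is available for it; the integrand of $R_T$ as originally written is anticipating, and this is the only subtlety in making sense of it. I would then bound
\[
\E[R_T^2] = \E\bigl[e^{-2Y_T}\bar N_T^2\bigr] \le \bigl(\E[e^{-4Y_T}]\bigr)^{1/2}\bigl(\E[\bar N_T^4]\bigr)^{1/2}
\]
by Cauchy--Schwarz and estimate the two factors separately, abbreviating $\lambda := \gamma\,\E[v'(Z^\epl)v'(X^\epl)]$, which is $> 1$ by \eqref{eq:as_epsilon}, and recalling $\int_0^T \eta_r\,\dd r = \gamma\log\frac{\beta+T}{\beta}$.

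For the first factor I would repeat, with the exponent $2$ replaced by $4$, the computation in the proof of \cref{lem:Q1}: \cref{lem:decomposition_poisson} rewrites $-4Y_T$ as a bounded part, the deterministic term $-4\lambda\log\frac{\beta+T}{\beta}$, and a stochastic integral whose exponential has expectation bounded uniformly in $T$, as in \eqref{eq:martingale} and the lines following it, since $\int_0^\infty \eta_r^2\,\dd r < \infty$ and $\partial_x\psi$ is bounded; hence $\E[e^{-4Y_T}] \le C(\beta+T)^{-4\lambda}$. For the second factor, $\E[\bar N_T^4] \le C\,\E[\langle \bar N\rangle_T^2]$ with $\langle \bar N\rangle_T = \int_0^T \eta_t^2 e^{2Y_t} v'(Z_t^\epl)^2\,\dd t$; applying \cref{lem:decomposition_poisson} now to $2Y_t$ gives $e^{2Y_t} \le C(\beta+t)^{2\lambda} G_t$ with $G_t := e^{-2\sqrt{2\sigma}\int_0^t \eta_r \partial_x\psi(X_r^\epl,Z_r^\epl)\,\dd W_r}$, and since $v'$ and $(\beta+t)\eta_t$ are bounded, $\langle \bar N\rangle_T \le C\int_0^T (\beta+t)^{2\lambda-2} G_t\,\dd t$. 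Squaring, using Cauchy--Schwarz in the $\dd t$-integral, and using the uniform bound $\E[G_t^2] \le C$ (the same exponential-moment estimate), this yields $\E[\langle \bar N\rangle_T^2] \le C\bigl(\int_0^T (\beta+t)^{2\lambda-2}\,\dd t\bigr)^2 \le C(\beta+T)^{4\lambda-2}$, so $\E[\bar N_T^4] \le C(\beta+T)^{4\lambda-2}$. Combining the two factors gives $\E[R_T^2] \le C(\beta+T)^{-2\lambda}(\beta+T)^{2\lambda-1} = C(\beta+T)^{-1} \to 0$, which is the assertion.

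The main obstacle is not any individual estimate but the exponent bookkeeping: the integrand of $\bar N_T$ grows polynomially, like $(\beta+t)^{\lambda}$ with $\lambda$ possibly large, so $\E[\bar N_T^4]$ is only polynomially bounded and by itself does not vanish; the argument succeeds precisely because the decay $\E[e^{-4Y_T}] \sim (\beta+T)^{-4\lambda}$ is exactly strong enough, after Cauchy--Schwarz, to absorb this growth and leave a net factor $(\beta+T)^{-1}$. Two secondary points require care: first, that $\bar N$ and all stochastic integrals produced by \cref{lem:decomposition_poisson} are genuine martingales rather than merely local ones, which follows from boundedness of $v'$, of $\psi$ and its derivatives, and the deterministic bound on $\eta$; second, that in estimating $\E[\langle \bar N\rangle_T^2]$ one cannot decouple $G_t$ from $\bar N_T$ (they are driven by the same Brownian motion), so one must pass from the square of the time-integral to a quantity controlled by $\E[G_t^2]$ via Cauchy--Schwarz inside the integral, rather than by any conditioning argument.
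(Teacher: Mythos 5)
Your proof is correct and reaches the same $\mathcal O(T^{-1})$ bound on the second moment, but it follows a genuinely different route from the paper. The paper applies the It\^o isometry directly to the integral as written, with the kernel $e^{-\int_t^T \eta_s v'(Z_s^\epl)v'(X_s^\epl)\dd s}$ kept inside the integrand, then uses Tonelli, \cref{lem:decomposition_poisson}, and the identity $\E[M_T/M_t]=1$ for the exponential martingale \eqref{eq:martingale} to reduce everything to the deterministic integral $C\int_0^T \eta_t^2\exp\bigl(-2\E[v'(Z^\epl)v'(X^\epl)]\int_t^T\eta_r\dd r\bigr)\dd t=\mathcal O(T^{-1})$. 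You instead factor out $e^{-Y_T}$ with $Y_t=\int_0^t\eta_s v'(Z_s^\epl)v'(X_s^\epl)\dd s$, so that the stochastic integral $\bar N_T$ has an adapted integrand, and you pay for re-coupling the two factors with Cauchy--Schwarz, bounding $\E[e^{-4Y_T}]$ by the \cref{lem:Q1}-type computation and $\E[\bar N_T^4]$ by Burkholder--Davis--Gundy plus a weighted Cauchy--Schwarz and the uniform exponential-moment bound on $G_t$; the polynomial growth $(\beta+T)^{4\lambda-2}$ of the fourth moment is exactly absorbed by the decay $(\beta+T)^{-4\lambda}$ of $\E[e^{-4Y_T}]$, leaving $\mathcal O(T^{-1})$, the same rate as in the paper and as quoted in \cref{rem:rate}. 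What your version buys is a rigorous treatment of the adaptedness point you flag: as the term arises in \eqref{eq:estimator_proof} through the variation-of-constants formula, it is precisely $e^{-Y_T}\bar N_T$, and the kernel as written anticipates the future of the path, so the paper's isometry step is formal in that respect, whereas your splitting only ever integrates adapted processes. What the paper's version buys is brevity: it needs only second moments and the single martingale-ratio (conditioning) trick, with no BDG or fourth-moment bookkeeping, and it loses nothing in the exponent. Both arguments rely on \cref{lem:decomposition_poisson}, the boundedness of $\psi$ and its derivatives on the torus, and the condition $\gamma\E[v'(Z^\epl)v'(X^\epl)]>1$ from \eqref{eq:as_epsilon} in essentially the same places.
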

\begin{proof}
We have by the It\^{o} isometry that
\begin{equation}
    \begin{split}
        \E\left[\left(\int_0^T \eta_t e^{-\int_t^T \eta_s v'(Z_s^\epl)v'(X_s^\epl)\dd{s}}v'(Z_t^\epl)\dd{W_t}\right)^2\right] &= \E\left[\int_0^T \eta_t^2 e^{-2\int_t^T \eta_s v'(Z_s^\epl)v'(X_s^\epl)\dd{s}}v'(Z_t^\epl)^2\dd{t}\right].
    \end{split}
\end{equation}
Since $v'(Z_t^\epl)$ is bounded, we have by Tonelli's theorem and computations in the proof of \cref{lem:Q1} that the right hand side above is bounded from above by
\begin{equation}
\begin{split}
    &C\int_0^T \eta_t^2 \E\left[\exp\left(-2\int_t^T \eta_r\dd{r}\cdot\E[v'(Z^\epl)v'(X^\epl)] + 2\sqrt{2\sigma}\int_t^T \eta_r\frac{\partial}{\partial x}\psi(X_r^\epl,Z_r^\epl)\dd{W_r}\right)\right]\dd{t}\\
    &= C\int_0^T \eta_t^2 \exp\left(-2\int_t^T \eta_r\dd{r}\cdot\E[v'(Z^\epl)v'(X^\epl)]\right) \E\left[\frac{M_T}{M_t}\exp\left(-4\sigma\int_t^T \eta_r^2\left(\frac{\partial}{\partial x}\psi(X_r^\epl, Z_r^\epl)\right)^2\dd{r}\right)\right]\dd{t}\\
    &\le C\int_0^T \eta_t^2 \exp\left(-2\int_t^T \eta_r\dd{r}\cdot\E[v'(Z^\epl)v'(X^\epl)]\right) \E\left[\frac{M_T}{M_t}\right]\dd{t}\\
    &= C\int_0^T \eta_t^2 \exp\left(-2\int_t^T \eta_r\dd{r}\cdot\E[v'(Z^\epl)v'(X^\epl)]\right)\dd{t},
\end{split}
\end{equation}
where the martingale $M_t$ is defined in \eqref{eq:martingale} and the last line follows since
\[
    \E\left[\frac{M_T}{M_t}\right] = \E\left[\E\left[\frac{M_T}{M_t}\mid M_t\right]\right] = \E\left[\frac{1}{M_t}\E[M_T\mid M_t]\right] = \E\left[\frac{1}{M_t}M_t\right] = 1.
\]
Now substituting the expressions for $\eta_r$ and $\eta_t$ gives
\begin{equation}
    \begin{split}
        &\E\left[\left(\int_0^T \eta_t e^{-\int_t^T \eta_s v'(Z_s^\epl)v'(X_s^\epl)\dd{s}}v'(Z_t^\epl)\dd{W_t}\right)^2\right]\\
            &\le C\int_0^T \eta_t^2 \exp\left(-2\int_t^T \eta_r\dd{r}\cdot\E[v'(Z^\epl)v'(X^\epl)]\right)\dd{t}\\
            &= C\int_0^T \left(\frac{\gamma}{\beta+t}\right)^2\exp\left(-2\E[v'(Z^\epl)v'(X^\epl)]\int_t^T \frac{\gamma}{\beta+r}\dd{r}\cdot\right)\dd{t}\\
            &= C\int_0^T \left(\frac{\gamma}{\beta+t}\right)^2\left(\frac{\beta+T}{\beta+t}\right)^{-2\gamma\E[v'(Z^\epl)v'(X^\epl)]}\dd{t},\\
    \end{split}
\end{equation}
which implies
\begin{equation}
    \begin{split}
        &\E\left[\left(\int_0^T \eta_t e^{-\int_t^T \eta_s v'(Z_s^\epl)v'(X_s^\epl)\dd{s}}v'(Z_t^\epl)\dd{W_t}\right)^2\right]\\
            &\le C\gamma^2(\beta+T)^{-2\gamma\E[v'(Z^\epl)v'(X^\epl)]}\int_0^T \frac{1}{(\beta+t)^{2-2\gamma\E[v'(Z^\epl)v'(X^\epl)]}}\dd{t}\\
            &= \mathcal O(T^{-2\gamma\E[v'(Z^\epl)v'(X^\epl)] + 2\gamma\E[v'(Z^\epl)v'(X^\epl)] - 2 + 1})\\
            &= \mathcal O(T^{-1}),
    \end{split}
\end{equation}
where in computing the integral, we used the fact that $\gamma\E[v'(Z^\epl)v'(X^\epl)] > 1$ by \eqref{eq:as_epsilon}. This shows the desired limit as $T\to\infty$.
\end{proof}

\begin{lemma} \label{lem:Q3}
Under \cref{as:proof}, it holds 
\begin{equation}
\lim_{T \to \infty} \int_0^T \eta_t e^{- \int_t^T \eta_s v'(Z_s^\epl) v'(X_s^\epl) \dd s} v'(Z_t^\epl) p' \left( \frac{X_t^\epl}\epl \right) \dd t = \frac{\E \left[ v'(Z^\epl) p' \left( \frac{X^\epl}\epl \right) \right]}{\E \left[ v'(Z^\epl) v'(X^\epl) \right]}, \qquad \text{in } L^2.
\end{equation}
\end{lemma}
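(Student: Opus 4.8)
The intuition is that the kernel $\eta_t\,e^{-\int_t^T \eta_s v'(Z_s^\epl)v'(X_s^\epl)\dd s}$ is nonnegative, concentrates its mass near $t=T$, and integrates to a quantity converging to $1/c$, where I write $c:=\E[v'(Z^\epl)v'(X^\epl)]$ for the expectation under the invariant measure $\mu^\epl$: replacing $v'(Z_s^\epl)v'(X_s^\epl)$ in the exponent by $c$ turns $\int_0^T \eta_t e^{-\int_t^T\eta_s v'(Z_s^\epl)v'(X_s^\epl)\dd s}\dd t$ into $\int_0^T\eta_t e^{-c\int_t^T\eta_s\dd s}\dd t = c^{-1}\bigl(1-(\beta/(\beta+T))^{\gamma c}\bigr)\to c^{-1}$, while replacing $v'(Z_t^\epl)p'(X_t^\epl/\epl)$ by its $\mu^\epl$-mean would produce exactly the claimed limit. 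I would make this precise by comparing the integral $I_T$ in the statement with its \emph{frozen-exponent} surrogate
\begin{equation*}
  \widetilde I_T := \int_0^T \eta_t\, e^{-c\int_t^T\eta_s\dd s}\, v'(Z_t^\epl)\,p'\!\left(\tfrac{X_t^\epl}{\epl}\right)\dd t ,
\end{equation*}
and proving separately that (a) $\widetilde I_T\to c^{-1}\E[v'(Z^\epl)p'(X^\epl/\epl)]$ in $L^2$, and (b) $I_T-\widetilde I_T\to 0$ in $L^2$. Throughout, $\epl<\epsilon_0$ is fixed so that $c>0$ and $\gamma c>1$ hold by \eqref{eq:as_epsilon}.

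For step (a) I would use $\int_t^T\eta_s\dd s=\gamma\log\frac{\beta+T}{\beta+t}$, so that $\eta_t e^{-c\int_t^T\eta_s\dd s}=\gamma(\beta+T)^{-\gamma c}\zeta_t$ with $\zeta_t$ precisely the quantity appearing in \cref{lem:decomposition_poisson_2}. Applying \cref{lem:decomposition_poisson_2} with $(s,t)=(0,T)$ and multiplying the identity by $\gamma(\beta+T)^{-\gamma c}$: the term involving $\int_0^T\zeta_r\dd r=[(\beta+T)^{\gamma c}-\beta^{\gamma c}]/(\gamma c)$ contributes $c^{-1}\bigl(1-(\beta/(\beta+T))^{\gamma c}\bigr)\E[v'(Z^\epl)p'(X^\epl/\epl)]\to c^{-1}\E[v'(Z^\epl)p'(X^\epl/\epl)]$; the boundary terms $\zeta_T\tilde\psi$, $\zeta_0\tilde\psi$ and the term $\int_0^T\zeta_r'\tilde\psi\dd r$ are $O((\beta+T)^{-1})$ after the prefactor, using $\gamma c>1$ and boundedness of $\tilde\psi$ and of $\int_0^T|\zeta_r'|\dd r$; and the stochastic-integral term has $L^2$-norm $O\bigl((\beta+T)^{-\gamma c}(\int_0^T\zeta_r^2\dd r)^{1/2}\bigr)=O((\beta+T)^{-1/2})$ by the It\^o isometry and boundedness of $\partial_x\tilde\psi$. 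This gives step (a), with rate $O(T^{-1/2})$.

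For step (b), which is the heart of the matter, I would apply \cref{lem:decomposition_poisson} with $(s,t)=(t,T)$ to write $\int_t^T\eta_s v'(Z_s^\epl)v'(X_s^\epl)\dd s=c\int_t^T\eta_s\dd s+E_t^T$ with $E_t^T=B_t^T+N_t^T$, where $B_t^T:=\eta_T\psi(X_T^\epl,Z_T^\epl)-\eta_t\psi(X_t^\epl,Z_t^\epl)-\int_t^T\eta_r'\psi(X_r^\epl,Z_r^\epl)\dd r$ and $N_t^T:=-\sqrt{2\sigma}\int_t^T\eta_r\partial_x\psi(X_r^\epl,Z_r^\epl)\dd W_r$. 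Since $\int_t^T|\eta_r'|\dd r=\eta_t-\eta_T$ and $\psi$ is bounded, $|B_t^T|\le 2\eta_t\sup|\psi|\le 2\eta_0\sup|\psi|$ uniformly; and since the integrand of $N_t^T$ is bounded, the quadratic variation of $T\mapsto N_t^T$ is at most $2\sigma\sup|\partial_x\psi|^2\int_t^T\eta_r^2\dd r\le C\eta_t$, in particular bounded, so the standard estimate for exponentials of stochastic integrals with bounded integrand gives $\E[e^{-2N_t^T}]\le e^{C\eta_t}$, while Jensen's inequality together with $\E[N_t^T]=0$ gives $\E[e^{-N_t^T}]\ge1$; hence $\E[(e^{-N_t^T}-1)^2]=\E[e^{-2N_t^T}]-2\E[e^{-N_t^T}]+1\le e^{C\eta_t}-1\le C\eta_t$. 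Combining this with the pointwise inequality $|e^{-E_t^T}-1|\le e^{|B_t^T|}\bigl(|e^{-N_t^T}-1|+|B_t^T|\bigr)$, with $|v'(Z_t^\epl)p'(X_t^\epl/\epl)|\le C$ (periodicity of $v$ and $p$), and with Minkowski's integral inequality, I would obtain
\begin{equation*}
  \bigl\|I_T-\widetilde I_T\bigr\|_{L^2}\le C\int_0^T \eta_t\, e^{-c\int_t^T\eta_s\dd s}\,\sqrt{\eta_t}\,\dd t = C\gamma^{3/2}(\beta+T)^{-\gamma c}\int_0^T(\beta+t)^{\gamma c-3/2}\dd t = O\bigl((\beta+T)^{-1/2}\bigr),
\end{equation*}
using $\gamma c>1/2$. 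Hence $I_T-\widetilde I_T\to0$ in $L^2$, and combining with (a) proves the lemma.

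I expect step (b) to be the main obstacle: one does not have a uniform smallness of $e^{-E_t^T}-1$ over $t\in[0,T]$ (its $L^2$-norm decays only like $\sqrt{\eta_t}$, which is $O(1)$ for $t$ of order one), so the convergence must be extracted by pairing this decay against the concentration of the kernel $\eta_t e^{-c\int_t^T\eta_s\dd s}$ near $t=T$, which is exactly where the condition $\gamma c>1$ (that is, \cref{as:proof}(\ref{it:P7})) enters in order to make the resulting learning-rate integral converge. Alternatively, one could run the same comparison at the level of the random linear ODE $\dd I_T=\eta_T\bigl(v'(Z_T^\epl)p'(X_T^\epl/\epl)-v'(Z_T^\epl)v'(X_T^\epl)\,I_T\bigr)\dd T$ with $I_0=0$, which $I_T$ satisfies; this is probably the more robust route toward a multidimensional extension, where the explicit integral representation is unavailable.
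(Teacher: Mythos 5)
Your proposal is correct and follows essentially the same route as the paper: the paper splits the error into three terms $I+II+III$, where its term $I$ is your step (b) (replacing the random exponent by $\E[v'(X^\epl)v'(Z^\epl)]\int_t^T\eta_s\,\dd s$ via \cref{lem:decomposition_poisson} and exponential-martingale bounds) and its terms $II+III$ together are your step (a) (ergodic averaging of $v'(Z_t^\epl)p'(X_t^\epl/\epl)$ via \cref{lem:decomposition_poisson_2} and the weights $\zeta_r$), with the same use of $\gamma\E[v'(X^\epl)v'(Z^\epl)]>1$ and the same $\mathcal O(T^{-1/2})$ rate. The only difference is technical: for the exponent-error term you use Minkowski's integral inequality with the pointwise bound $\|e^{-E_t^T}-1\|_{L^2}\le C\sqrt{\eta_t}$, a slightly cleaner variant of the paper's argument (H\"older in $t$, which introduces a factor $T$, followed by the estimate $\E\bigl|e^{Q(t,T)}-1\bigr|^2\le e^{C/(\beta+t)}-2e^{-C/(\beta+t)}+1$ and an auxiliary exponent $\lambda$), but both rest on the same exponential martingales.
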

\begin{proof}
First, we have by the triangle inequality that
\begin{equation}
\begin{split}
    &\left|\int_0^T \eta_t\operatorname{exp}\left(-\int_t^T \eta_sv'(Z_s^\epl)v'(X_s^\epl)\dd{s}\right)v'(Z_t^\epl)p'\left(\frac{X_t^\epl}{\epl}\right)\d{t} - \frac{\E\left[v'(Z^\epl)p'(\frac{X^\epl}{\epl})\right]}{\E\left[v'(Z^\epl)v'(X^\epl)\right]}\right|\\
    &\le \underbrace{\left|\int_0^T \eta_t v'(Z_t^\epl)p'\left(\frac{X_t^\epl}{\epl}\right)\left[\exp\left(-\int_t^T \eta_sv'(Z_s^\epl)v'(X_s^\epl)\dd{s}\right) - \exp\left(-\int_t^T \eta_s\dd{s}\cdot\E[v'(Z^\epl)v'(X^\epl)]\right)\right]\dd{t}\right|}_{I}\\
    &+ \underbrace{\left| \int_0^T \eta_t \left(v'(Z_t^\epl)p'\left(\frac{X_t^\epl}{\epl}\right) - \E\left[v'(Z^\epl)p'\left(\frac{X^\epl}{\epl}\right)\right]\right)\exp\left(-\int_t^T \eta_s\dd{s}\cdot\E[v'(Z^\epl)v'(X^\epl)]\right)\dd{t}\right|}_{II}\\
    &+ \underbrace{\left|\int_0^T \eta_t \E\left[v'(Z^\epl)p'\left(\frac{X^\epl}{\epl}\right)\right]\exp\left(-\int_t^T \eta_s\dd{s}\cdot\E[v'(Z^\epl)v'(X^\epl)]\right)\dd{t} - \frac{\E[v'(Z^\epl)p'(\frac{X^\epl}{\epl})]}{\E[v'(Z^\epl)V'(X^\epl)]}\right|}_{III},
\end{split}
\end{equation}
where $(X^\epl, Z^\epl)$ is distributed according to the invariant measure $\mu^\epl$ for the joint multiscale process $(X_t^\epl, Z_t^\epl)$.
We now show that $I\to 0$ in $L^2$. We have 
\begin{equation}
    \begin{split}
        &\mathfrak{I}(T)
        := \left|\int_0^T \eta_t v'(Z_t^\epl)p'\left(\frac{X_t^\epl}{\epl}\right)\left[e^{-\int_t^T \eta_sv'(Z_s^\epl)v'(X_s^\epl)\dd{s}} - e^{-\int_t^T \eta_s\dd{s}\cdot\E[v'(Z^\epl)v'(X^\epl)]}\right]\dd{t}\right|^2\\
        &= \left|\int_0^T \eta_t v'(Z_t^\epl)p'\left(\frac{X_t^\epl}{\epl}\right)e^{-\int_t^T \eta_s\dd{s}\cdot\E[v'(Z^\epl)v'(X^\epl)]}\left(e^{-\int_t^T \eta_sv'(Z_s^\epl)v'(X_s^\epl)\dd{s} + \int_t^T \eta_s\dd{s}\cdot\E[v'(Z^\epl)v'(X^\epl)]} - 1\right)\dd{t}\right|^2\\
        &\le CT \int_0^T \eta_t^2 e^{-2\int_t^T \eta_s\dd{s}\cdot\E[v'(Z^\epl)v'(X^\epl)]}\Bigg|e^{-\int_t^T \eta_sv'(Z_s^\epl)v'(X_s^\epl)\dd{s} + \int_t^T \eta_s\dd{s}\cdot\E[v'(Z^\epl)v'(X^\epl)]} - 1\Bigg|^2\dd{t},
    \end{split}
\end{equation}
where the last line follows from H\"{o}lder's inequality and the fact that $p'$ and $V'$ are bounded on the torus. Now applying \cref{lem:decomposition_poisson}, we have
\begin{equation}
    \begin{split}
        \mathfrak{I}(T)
            &\le CT \int_0^T \eta_t^2 e^{-2\int_t^T \eta_s\dd{s}\cdot\E[v'(Z^\epl)v'(X^\epl)]}\\
            &\hspace{20ex}\left|e^{-\eta_T\psi(X_T^\epl, Z_T^\epl) + \eta_t\psi(X_t^\epl,Z_t^\epl) + \int_t^T \eta_s'\psi(X_s^\epl, Z_s^\epl)\dd{s} + \sqrt{2\sigma}\int_t^T \eta_s\frac{\partial}{\partial x}\psi(X_s^\epl, Z_s^\epl)\dd{W_s}} - 1\right|^2\dd{t}\\
            &= CT \int_0^T \left(\frac{\gamma}{\beta+t}\right)^2\left(\frac{\beta+T}{\beta+t}\right)^{-2\gamma\E[v'(Z^\epl)v'(X^\epl)]}\left|e^{Q(t,T)} - 1\right|^2\dd{t},
    \end{split}
\end{equation}
where
\[
    Q(t, T) = -\eta_T\psi(X_T^\epl, Z_T^\epl) + \eta_t\psi(X_t^\epl,Z_t^\epl) + \int_t^T \eta_s'\psi(X_s^\epl, Z_s^\epl)\dd{s} + \sqrt{2\sigma}\int_t^T \eta_s\frac{\partial}{\partial x}\psi(X_s^\epl, Z_s^\epl)\dd{W_s}.
\]
We focus on bounding $|e^{Q(t,T)}-1|^2 = e^{2Q(t,T)} - 2e^{Q(t,T)}+1$. First, recall that
\[
    M_T := \exp\left(2\sqrt{2\sigma}\int_0^T \eta_r \frac{\partial}{\partial x} \psi(X_r^\epl, Z_r^\epl) \dd{W_r} - 4\sigma \int_0^T \eta_r^2 \left(\frac{\partial}{\partial x} \psi(X_r^\epl, Z_r^\epl)\right)^2\dd{r}\right)
\]
and
\[
    \tilde M_T := \exp\left(\sqrt{2\sigma}\int_0^T \eta_r \frac{\partial}{\partial x} \psi(X_r^\epl, Z_r^\epl) \dd{W_r} - \sigma \int_0^T \eta_r^2 \left(\frac{\partial}{\partial x} \psi(X_r^\epl, Z_r^\epl)\right)^2\dd{r}\right)
\]
are exponential martingales. Then repeatedly using the fact that $t\le T$ and that $\psi$ and its derivatives are bounded, we have
\begin{equation}
    \begin{split}
        e^{2Q(t,T)}
            &= e^{-\frac{2\gamma}{\beta+T}\psi(X_T^\epl, Z_T^\epl) + \frac{2\gamma}{\beta+t}\psi(X_t^\epl, Z_t^\epl) - 2\int_t^T \frac{\gamma}{(\beta+s)^2}\psi(X_s^\epl, Z_s^\epl)\dd{s} + 2\sqrt{2\sigma}\int_t^T \eta_s\frac{\partial}{\partial x}\psi(X_s^\epl, Z_s^\epl)\dd{W_s}}\\
            &\le \exp\left(\frac{C}{\beta+T} + \frac{C}{\beta+t} + C\int_t^T \frac{1}{(\beta+s)^2}\dd{s} + 2\sqrt{2\sigma}\int_t^T \eta_s\frac{\partial}{\partial x}\psi(X_s^\epl, Z_s^\epl)\dd{W_s}\right)\\
            &\le e^{\frac{C}{\beta+t}} \frac{M_T}{M_t} \exp\left(-4\sigma\int_t^T \eta_s^2\left(\frac{\partial}{\partial x}\psi(X_s^\epl, Z_s^\epl)\right)^2\dd{s}\right)\\
            &\le e^{\frac{C}{\beta+t}}\frac{M_T}{M_t}.
    \end{split}
\end{equation}
Similarly,
\begin{equation}
    \begin{split}
        e^{Q(t,T)}
            &\ge e^{-\frac{C}{\beta+T} - \frac{C}{\beta+t} - C\int_t^T \frac{1}{(\beta+s)^2}\dd{s} + \sqrt{2\sigma}\int_t^T \eta_s\frac{\partial}{\partial x}\psi(X_s^\epl,Z_s^\epl)\dd{W_s}}\\
            &\ge e^{-\frac{C}{\beta+t}}\frac{\tilde M_T}{\tilde M_t}\exp\left(-\sigma\int_t^T \eta_s^2\left(\frac{\partial}{\partial x}\psi(X_s^\epl, Z_s^\epl)\right)^2\dd{s}\right)\\
            &\ge e^{-\frac{C}{\beta+t}}\frac{\tilde M_T}{\tilde M_t}\exp\left(-C\int_t^T \left(\frac{\gamma}{\beta+s}\right)^2\dd{s}\right)\\
            &\ge e^{-\frac{C}{\beta+t}}\frac{\tilde M_T}{\tilde M_t}.
    \end{split}
\end{equation}
Thus,
\[
    \left|e^{Q(t,T)} - 1\right|^2 \le e^{\frac{C}{\beta+t}}\frac{M_T}{M_t} - 2e^{-\frac{C}{\beta+t}}\frac{\tilde M_T}{\tilde M_t} + 1.
\]
It follows from Tonelli's theorem and the fact that $\E[M_T/M_t] = \E[\tilde M_T/\tilde M_t] = 1$ that
\begin{equation}
    \begin{split}
        \E\mathfrak{I}(T) 
            &\le CT\E\int_0^T \left(\frac{\gamma}{\beta+t}\right)^2\left(\frac{\beta+T}{\beta+t}\right)^{-2\gamma\E[v'(Z^\epl)v'(X^\epl)]}\left|e^{Q(t,T)} - 1\right|^2\dd{t}\\
            &= CT\int_0^T \left(\frac{\gamma}{\beta+t}\right)^2\left(\frac{\beta+T}{\beta+t}\right)^{-2\gamma\E[v'(Z^\epl)v'(X^\epl)]}\E\left[\left|e^{Q(t,T)} - 1\right|^2\right]\dd{t}\\
            &\le CT\int_0^T \left(\frac{\gamma}{\beta+t}\right)^2\left(\frac{\beta+T}{\beta+t}\right)^{-2\gamma\E[v'(Z^\epl)v'(X^\epl)]}\left(e^{\frac{C}{\beta+t}} - 2e^{-\frac{C}{\beta+t}}+1\right)\dd{t}.
    \end{split}
\end{equation}
Now since $\gamma\E[v'(Z^\epl)v'(X^\epl)] > 1$, there is $\lambda>0$ such that $2\gamma\E[v'(Z^\epl)v'(X^\epl)] - 2 - \lambda > 0$, so because $T\ge t$, we have
\[
    \left(\frac{\gamma}{\beta+t}\right)^2\left(\frac{\beta+T}{\beta+t}\right)^{-2\gamma\E[v'(Z^\epl)v'(X^\epl)]} = \gamma^2 \frac{(\beta+t)^{2\gamma\E[v'(Z^\epl)v'(X^\epl)] - 2-\lambda+\lambda}}{(\beta+T)^{2\gamma\E[v'(Z^\epl)v'(X^\epl)]-2-\lambda+2+\lambda}} \le \frac{\gamma^2(\beta+t)^\lambda}{(\beta+T)^{2+\lambda}}.
\]
Thus, it follows that
\[
    \E\mathfrak{I}(T) \le \frac{CT}{(\beta+T)^{2+\lambda}}\int_0^T (\beta+t)^\lambda\left(e^{\frac{C}{\beta+t}} - 2e^{-\frac{C}{\beta+t}}+1\right)\dd{t}.
\]
Now we have that 
\begin{equation}
\begin{split}
    e^{\frac{C}{\beta+t}} - 2e^{-\frac{C}{\beta+t}}+1 &= 1 + \frac{C}{\beta+t} + o((\beta+t)^{-1}) - 2\left(1 - \frac{C}{\beta+t} + o((\beta+t)^{-1})\right) + 1\\
    &= \frac{3C}{\beta+t} + o((\beta+t)^{-1})
\end{split}
\end{equation}
so that for $t_0, T$ sufficiently large, we have
\[
    \E\mathfrak{I}(T) \le \frac{CT}{(\beta+T)^{2+\lambda}} + \frac{CT}{(\beta+T)^{2+\lambda}}\int_{t_0}^T (\beta+t)^{-1+\lambda}\dd{t} = \frac{CT}{(\beta+T)^{2+\lambda}} + \frac{CT(\beta+T)^\lambda}{(\beta+T)^{2+\lambda}}.
\]
In particular, $I\to 0$ in $L^2$ at a rate of $\mathcal O(T^{-1/2})$ as $T\to\infty$. Now we look at $II$. As preparation toward this end, by \cref{lem:decomposition_poisson_2}, we have
\begin{equation}
\begin{aligned}
\int_0^T \zeta_r v'(Z_r^\epl) p'\left(\frac{X_r^\epl}{\epl}\right) \dd r &= \zeta_T \tilde\psi(X_T^\epl, Z_T^\epl) - \zeta_0 \tilde\psi(X_0^\epl, Z_0^\epl) + \int_0^T \zeta_r \dd r \E\left[v'(Z^\epl) p'\left(\frac{X^\epl}{\epl}\right)\right] \\
&\quad - \int_0^T \zeta_r' \tilde\psi(X_r^\epl, Z_r^\epl) \dd r - \sqrt{2\sigma} \int_0^T \zeta_r \frac{\partial}{\partial x} \tilde\psi(X_r^\epl, Z_r^\epl) \dd W_r,
\end{aligned}
\end{equation}
where $\zeta_r := (\beta+r)^{\gamma\E[v'(Z^\epl)v'(X^\epl)]-1}$. Rearranging and dividing by
\begin{equation}
    \begin{split}
        \int_0^T \zeta_r\dd{r} &= \int_0^T (\beta+r)^{\gamma\E[v'(Z^\epl)v'(X^\epl)]-1}\dd{r}\\
        &= \frac{1}{\gamma\E[v'(Z^\epl)v'(X^\epl)]}\left[(\beta+T)^{\gamma\E[v'(Z^\epl)v'(X^\epl)]} - \beta^{\gamma\E[v'(Z^\epl)v'(X^\epl)]}\right] = \mathcal O(T^{\gamma\E[v'(Z^\epl)v'(X^\epl)]})
    \end{split}
\end{equation}
gives
\begin{equation}
    \begin{split}
        &\left(\int_0^T \zeta_r\dd{r}\right)^{-1}\int_0^T \zeta_r\left(v'(Z_r^\epl)p'\left(\frac{X_r^\epl}{\epl}\right) - \E\left[v'(Z^\epl)p'\left(\frac{X^\epl}{\epl}\right)\right]\right)\dd{r}\\
        &\hspace{10ex}= \underbrace{\frac{\zeta_T\tilde\psi(X_T^\epl, Z_T^\epl) - \zeta_0\tilde\psi(X_0^\epl, Z_0^\epl)}{\int_0^T \zeta_r\dd{r}}}_{(i)} - \underbrace{\frac{\int_0^T \zeta_r'\tilde\psi(X_r^\epl,Z_r^\epl)\dd{r}}{\int_0^T \zeta_r\dd{r}}}_{(ii)} - \underbrace{\frac{\sqrt{2\sigma}\int_0^T \zeta_r\frac{\partial}{\partial x}\tilde\psi(X_r^\epl, Z_r^\epl)\dd{W_r}}{\int_0^T \zeta_r\dd{r}}}_{(iii)}.
    \end{split}
\end{equation}
For $(i)$, because $\tilde\psi$ is bounded, we have
\[
    \left|\frac{\zeta_T\tilde\psi(X_T^\epl, Z_T^\epl) - \zeta_0\tilde\psi(X_0^\epl, Z_0^\epl)}{\int_0^T \zeta_r\dd{r}}\right| \le \frac{\mathcal O(T^{\gamma\E[v'(Z^\epl)v'(X^\epl)]-1})}{\mathcal O(T^{\gamma\E[v'(Z^\epl)v'(X^\epl)]})} + \frac{C}{\mathcal O(T^{\gamma\E[v'(Z^\epl)v'(X^\epl)]})} = \mathcal O(T^{-1}),
\]
so $(i) \to 0$ in $L^2$ at a rate of $\mathcal O(T^{-1})$ as $T\to\infty$. Similarly, we have
\[
    \left|\frac{\int_0^T \zeta_r'\tilde\psi(X_r^\epl, Z_r^\epl)}{\int_0^T \zeta_r\dd{r}}\right| \le \frac{\mathcal O(T^{\gamma\E[v'(Z^\epl)v'(X^\epl)]-1})}{\mathcal O(T^{\gamma\E[v'(Z^\epl)v'(X^\epl)]})} = \mathcal{O}(T^{-1}),
\]
so $(ii)\to 0$ in $L^2$ at a rate of $\mathcal O(T^{-1})$ as well. For $(iii)$, we have by It\^{o} isometry and the fact that the derivative of $\psi$ is bounded that
\begin{equation}
    \begin{split}
        &2\sigma\left(\int_0^T \zeta_r\dd{r}\right)^{-2}\E\left[\left(\int_0^T \zeta_r\frac{\partial}{\partial x}\tilde\psi(X_r^\epl,Z_r^\epl)\dd{W_r}\right)^2\right]\\
        &\hspace{10ex}= 2\sigma\left(\int_0^T \zeta_r\dd{r}\right)^{-2}\E\left[\int_0^T \zeta_r^2\left(\frac{\partial}{\partial x}\tilde\psi(X_r^\epl,Z_r^\epl)\right)^2\dd{r}\right]\\
        &\hspace{10ex}\le C\left(\int_0^T \zeta_r\dd{r}\right)^{-2}\int_0^T \zeta_r^2\dd{r} = \mathcal O(T^{-1}).
    \end{split}
\end{equation}
Thus, we see that $(iii)\to 0$ in $L^2$ at a rate of $\mathcal O(T^{-1/2})$ as $T\to\infty$. It follows that 
\[
    \left(\int_0^T \zeta_r\dd{r}\right)^{-1}\int_0^T \zeta_r\left(v'(Z_r^\epl)p'\left(\frac{X_r^\epl}{\epl}\right) - \E\left[v'(Z^\epl)p'\left(\frac{X^\epl}{\epl}\right)\right]\right)\dd{r} \to 0\quad\text{in $L^2$}
\]
at a rate of $\mathcal O(T^{-1/2})$. Now returning to the convergence of $II$, we have
\begin{equation}
    \begin{split}
        &\E\left[\left| \int_0^T \eta_t \left(v'(Z_t^\epl)p'\left(\frac{X_t^\epl}{\epl}\right) - \E\left[v'(Z^\epl)p'\left(\frac{X^\epl}{\epl}\right)\right]\right)\exp\left(-\int_t^T \eta_s\dd{s}\cdot\E[v'(Z^\epl)v'(X^\epl)]\right)\dd{t}\right|^2\right]\\
        &= \E\left[\left| \int_0^T \left(\frac{\gamma}{\beta+t}\right)\left(\frac{\beta+T}{\beta+t}\right)^{-\gamma\E[v'(Z^\epl)v'(X^\epl)]} \left(v'(Z_t^\epl)p'\left(\frac{X_t^\epl}{\epl}\right) - \E\left[v'(Z^\epl)p'\left(\frac{X^\epl}{\epl}\right)\right]\right)\dd{t}\right|^2\right]\\
        &= \frac{\gamma^2}{(\beta+T)^{2\gamma\E[v'(Z^\epl)v'(X^\epl)]}}\E\left[\left|\int_0^T\zeta_t\left(v'(Z_t^\epl)p'\left(\frac{X_t^\epl}{\epl}\right) - \E\left[v'(Z^\epl)p'\left(\frac{X^\epl}{\epl}\right)\right]\right)\dd{t}\right|^2\right]\\
        &= \frac{\gamma^2\left(\int_0^T \zeta_t\dd{t}\right)^2}{(\beta+T)^{2\gamma\E[v'(Z^\epl)v'(X^\epl)]}}\E\left[\left|\left(\int_0^T \zeta_t\dd{t}\right)^{-1}\int_0^T\zeta_t\left(v'(Z_t^\epl)p'\left(\frac{X_t^\epl}{\epl}\right) - \E\left[v'(Z^\epl)p'\left(\frac{X^\epl}{\epl}\right)\right]\right)\dd{t}\right|^2\right]\\
        &\le \frac{\gamma^2\left(\int_0^T \zeta_t\dd{t}\right)^2\cdot\mathcal O(T^{-1})}{(\beta+T)^{2\gamma\E[v'(Z^\epl)v'(X^\epl)]}} = \frac{\mathcal O(T^{2\gamma\E[v'(Z^\epl)v'(X^\epl)]})\cdot \mathcal O(T^{-1})}{\mathcal O(T^{2\gamma\E[v'(Z^\epl)v'(X^\epl)]})} = \mathcal{O}(T^{-1})
    \end{split}
\end{equation}
so that $II\to 0$ in $L^2$ at a rate of $\mathcal O(T^{-1/2})$. We now turn to showing that $III$ converges to $0$ in $L^2$. We have
\begin{equation}
    \begin{split}
        &\E\left[\left|\int_0^T \eta_t \E\left[v'(Z^\epl)p'\left(\frac{X^\epl}{\epl}\right)\right]\exp\left(-\int_t^T \eta_s\dd{s}\cdot\E[v'(Z^\epl)v'(X^\epl)]\right)\dd{t} - \frac{\E[v'(Z^\epl)p'(\frac{X^\epl}{\epl})]}{\E[v'(Z^\epl)V'(X^\epl)]}\right|^2\right]\\
        &= \left|\int_0^T \left(\frac{\gamma}{\beta+t}\right)\left(\frac{\beta+T}{\beta+t}\right)^{-\gamma\E[v'(Z^\epl)v'(X^\epl)]} \E\left[v'(Z^\epl)p'\left(\frac{X^\epl}{\epl}\right)\right]\dd{t} - \frac{\E[v'(Z^\epl)p'(\frac{X^\epl}{\epl})]}{\E[v'(Z^\epl)V'(X^\epl)]}\right|^2\\
        &= \left|\frac{\gamma\E\left[v'(Z^\epl)p'\left(\frac{X^\epl}{\epl}\right)\right]}{(\beta+T)^{\gamma\E[v'(Z^\epl)v'(X^\epl)]}}\int_0^T (\beta+t)^{\gamma\E[v'(Z^\epl)v'(X^\epl)]-1} \dd{t} - \frac{\E[v'(Z^\epl)p'(\frac{X^\epl}{\epl})]}{\E[v'(Z^\epl)V'(X^\epl)]}\right|^2\\
        &= \left|\frac{\E\left[v'(Z^\epl)p'\left(\frac{X^\epl}{\epl}\right)\right]}{\E[v'(Z^\epl)v'(X^\epl)]}\left(\frac{(\beta+T)^{\gamma\E[v'(Z^\epl)v'(X^\epl)]} - \beta^{\gamma\E[v'(Z^\epl)v'(X^\epl)]}}{(\beta+T)^{\gamma\E[v'(Z^\epl)v'(X^\epl)]}}\right) - \frac{\E[v'(Z^\epl)p'(\frac{X^\epl}{\epl})]}{\E[v'(Z^\epl)V'(X^\epl)]}\right|^2,
    \end{split}
\end{equation}
which we see converges to $0$ at the rate $\mathcal O(T^{-2\gamma\E[v'(Z^\epl)v'(X^\epl)]})$. Thus, since $\gamma\E[v'(Z^\epl)v'(X^\epl)] > 1$, we have that $III\to 0$ in $L^2$ at a rate of $\mathcal O(T^{-1})$. Combining the convergence for $I$, $II$, and $III$ gives
\[
    \int_0^T \eta_t\operatorname{exp}\left(-\int_t^T \eta_sv'(Z_s^\epl)v'(X_s^\epl)\dd{s}\right)v'(Z_t^\epl)p'\left(\frac{X_t^\epl}{\epl}\right)\d{t} - \frac{\E\left[v'(Z^\epl)p'(\frac{X^\epl}{\epl})\right]}{\E\left[v'(Z^\epl)v'(X^\epl)\right]} \to 0 \quad\text{in $L^2$}
\]
at a rate of $\mathcal O(T^{-1/2})$, as desired.
\end{proof}

\begin{proof}[Proof of \cref{thm:main}]
By the decomposition \eqref{eq:estimator_decomposition} and due to \cref{lem:Q1,lem:Q2,lem:Q3} we have
\begin{equation}
\lim_{T \to \infty} \widehat A^\epl_T = \alpha + \frac{\E \left[ v'(Z^\epl) p' \left( \frac{X^\epl}\epl \right) \right]}{\epl \E \left[ v'(Z^\epl) v'(X^\epl) \right]}, \qquad \text{in } L^2.
\end{equation}
Finally, applying \cref{lem:magic_formula} we obtain
\begin{equation}
\lim_{\epl \to 0} \lim_{T \to \infty} \widehat A^\epl_T = \alpha + A - \alpha = A, \qquad \text{in } L^2,
\end{equation}
which is the desired result.
\end{proof}

\begin{remark} \label{rem:rate}
From the proof of \cref{thm:main}, and in particular from the proofs of \cref{lem:Q1,lem:Q2,lem:Q3}, we notice that the rate of convergence of the estimator $\widehat A^\epl_T$ with respect to the final time $T$ is $\mathcal O(T^{-1/2})$. On the other hand, the rate of convergence with respect to the multiscale parameter $\epl$ is dependent on the limit in \cref{lem:magic_formula}, whose convergence rate is unknown. Its study would first require the analysis of the rate of weak convergence of the invariant measure $\mu^\epl$ of the multiscale joint process $(X_t^\epl, Z_t^\epl)$ towards the invariant measure $\mu$ of its homogenized counterpart $(X_t, Z_t)$. We will return to this problem in future work.
\end{remark}

\section{Numerical experiments} \label{sec:experiments}
We now present numerical experiments to demonstrate our results. The code to reproduce these experiments is available on Github \cite{PaperCode}. Throughout our test cases, we sample multiscale data generated from equation \eqref{eq:SDE_multiscale} with the Euler-Maruyama scheme
\begin{equation} \label{eq:SDE_Euler_Maruyama_scheme} 
X_{n+1}^\epl - X_n^\epl = -  (\mathcal V^\epl)'(X_n^\epl; \alpha) \Delta t + \sqrt{2\sigma} \Delta W_n,\quad X_0^\epl 
= 0,\quad \Delta W_n \sim \mathcal N(0, \Delta t),
\end{equation}
where the time step $\Delta t$ is chosen sufficiently small with respect to $\epl$, i.e., $\Delta t = \epl^3$. Then, unless otherwise specified, we solve the SDE \eqref{eq:SDE_estimator_filter} for the modified SGDCT estimator using the Euler-Maruyama scheme
\begin{equation} \label{eq:SGDCT_Euler_Maruyama_scheme}
\widehat A^\epl_{n+1} - \widehat A^\epl_n = - \eta_n (U(Z_n^\epl) \otimes U(X_n^\epl)) \widehat A^\epl_n \Delta t - \eta_n U(Z_n^\epl) (X_{n+1}^\epl - X_n^\epl), \quad \widehat A_0^\epl = 0,
\end{equation}
for an appropriate vector of basis functions $U$. Furthermore, we take as learning rate 
\begin{equation}
\eta_n = \frac{10}{10 + n\Delta t},
\end{equation}
i.e., we set $\gamma = \beta = 10$, except where noted. We remark that choosing a good learning rate is highly dependent on the given problem \cite{smith2018disciplinedapproachneuralnetwork}. In our setting, the variance of our estimators depends on the constants appearing in the learning rate, and the optimal value can depend on the unknown parameter $A$ and so cannot be known in practice \cite{SiS20}. Due to these difficulties, we intuitively choose the learning rate to be smaller when the learned weights are not converging as expected.

Now, given a sample path $\{X_n^\epl\}_{n=0}^N$ with $N = \lfloor T/\Delta t\rfloor$, the exponential filtered path $\{Z_n^\epl\}_{n=0}^N$ is computed as
\begin{equation}\label{eq:discrete_exponential_filter}
    Z_0^\epl = 0,\quad\text{and}\quad Z_n^\epl = e^{-\Delta t/\delta} Z_{n-1}^\epl + \frac1\delta e^{-\Delta t/\delta}X_n^\epl\Delta t\quad\text{for}\quad n>0,
\end{equation}
and the moving average path $\{Z_n^\epl\}_{n=0}^N$ is computed as
\begin{equation}\label{eq:discrete_moving_average}
    \begin{cases}
        Z_0^\epl = 0,\\
        Z_n^\epl = (n\Delta t)^{-1}\sum_{i=0}^n X_n^\epl \Delta t &\text{if } 1 \le n < S,\\
        Z_n^\epl = \delta^{-1}\sum_{i=n-S}^{n-1} X_n^\epl\Delta t &\text{if } n \ge S,
    \end{cases}
\end{equation}
where $S = \lfloor \delta/\Delta t\rfloor$.

We now present three experiments. In the first, we will verify our theoretical results by showing that the SGDCT estimator is asymptotically unbiased when we use filtered data and biased otherwise. We also show the dependence on the filter width $\delta$. In our second experiment, we demonstrate the efficacy of our estimator for a two-dimensional drift coefficient, employing both the exponential filter and the moving average. Finally, in our third experiment, we learn the drift function using our semiparametric approach and observe that the learned function provides a satisfactory approximation of the true drift. Because our theoretical results are for the limit as $T\to\infty$, we choose the final time of computation $T$ in each experiment so that we observe convergence when computationally feasible.

\subsection{Verification of the theoretical results} \label{sec:verification}

In this section, we numerically verify our theoretical results. We let
\[
    V(x;\alpha) = \alpha \frac{x^2}{2} \qquad \text{and} \qquad p(x, y) = \sin(y),
\]
so that
\[
    \nabla\mathcal V^\epl(x;\alpha) = \alpha x + \frac1\epl \cos\left(\frac{x}{\epl}\right).
\]
We begin by showing that without using filtered data in our SGDCT estimator, that is, by using the estimator \eqref{eq:SDE_estimator}, we obtain $\alpha$ as a biased estimate of $A$. Thus, instead of solving \eqref{eq:SDE_estimator_filter} to obtain our estimates of $A$, we solve \eqref{eq:SDE_estimator} using the Euler-Maruyama scheme
\begin{equation}
    \widetilde A^\epl_{n+1} - \widetilde A^\epl_n = - \eta_n (X_n^\epl)^2 \widetilde A^\epl_n \Delta t - \eta_n X_n^\epl (X_{n+1}^\epl - X_n^\epl), \quad \widetilde A^\epl_0 = 0.
\end{equation}

\begin{figure}
\begin{tabular}{cc}
\centering
    \includegraphics[scale=0.48]{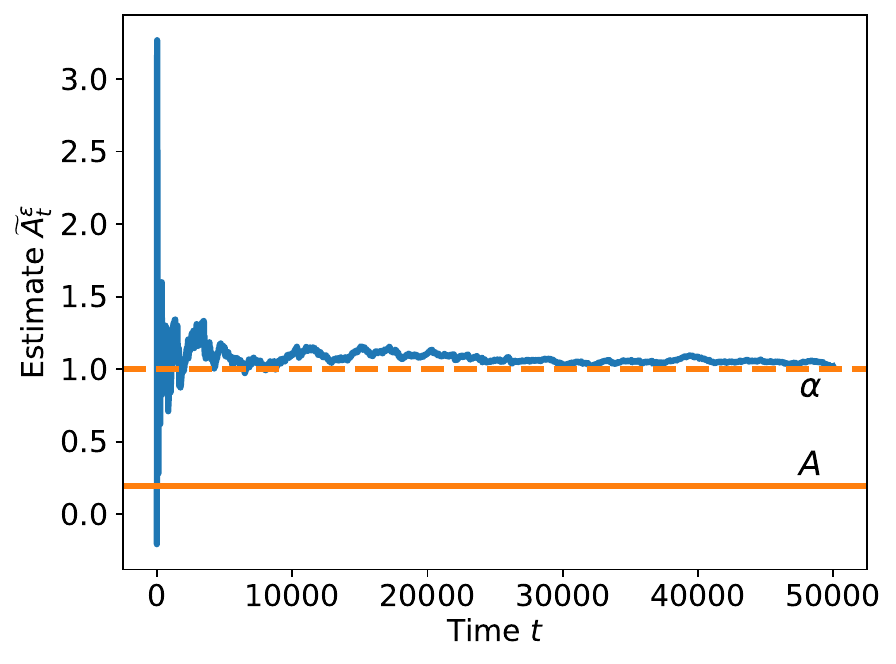} & \includegraphics[scale=0.48]{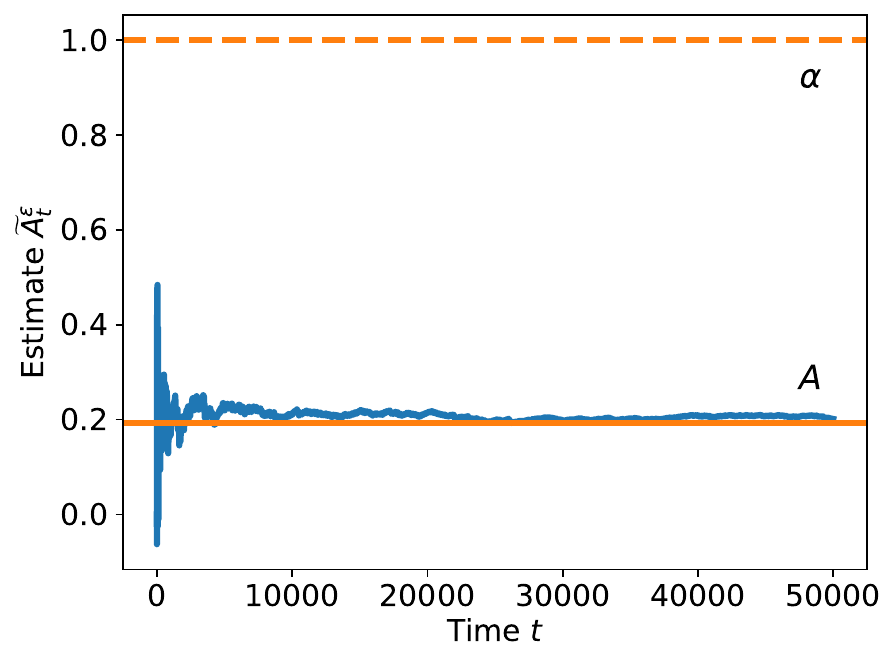}\\
    (a) SGDCT with no filter. & (b) SGDCT with exponential filter.
\end{tabular}
\caption{Comparison between SGDCT estimator (a) without filtered data and (b) with exponential filtered data, for the example in \cref{sec:verification}. In each plot, the dashed orange line is the biased value $\alpha$, the solid orange line is the unbiased value $A$, and the blue line is the estimate over time $t$.}
\label{fig:SGDCT-failure}
\end{figure}
We take the parameters
\[
    \alpha = 1,\quad \epl = 0.1,\quad \sigma=0.5,\quad T=5\times10^4,\quad \Delta t = 10^{-3},
\]
and we notice that in this case, due to equation \eqref{eq:K1D}, the drift coefficient $A$ of the homogenized equation reads
\begin{equation} \label{eq:A_exp1}
A = \mathcal K \alpha = \frac{(2\pi)^2}{\left( \int_0^{2\pi} e^{-2\sin(y)} \dd y \right) \left( \int_0^{2\pi} e^{2\sin(y)} \dd y \right)} = \frac1{I_0(2)^2} \simeq 0.19,
\end{equation}
where $I$ stands for the modified Bessel function of the first kind. 

The resulting estimate over time for a single sample path is shown in \cref{fig:SGDCT-failure}. We see that the estimator converges to the biased value $\alpha$ instead of the true parameter $A$.

We also compute estimates via the filtered SGDCT scheme \eqref{eq:SGDCT_Euler_Maruyama_scheme}, which in this case is
\[
    \widehat A_{n+1}^\epl - \widehat A_n^\epl = -\eta_n Z_n^\epl X_n^\epl \widehat A_n^\epl \Delta t - \eta_n Z_n^\epl (X_{n+1}^\epl - X_n^\epl),\quad \widehat A_0^\epl = 0,
\]
where $Z_n^\epl$ is computed with the exponential filter \eqref{eq:discrete_exponential_filter}. Doing so for the same sample path as before, we obtain the second plot in \cref{fig:SGDCT-failure} and see that the estimates now converge to the right value $A$. 

We now take the parameters
\[
    \alpha = 1,\quad \varepsilon = 0.025,\quad \sigma = 0.5,\quad T = 10^3,\quad \Delta t = 1.5625\times 10^{-5},
\]
and we take $\gamma = \beta = 1$ in the learning rate.
In \cref{fig:SGDCT-delta}, we demonstrate the convergence for large $T$ of the estimate $\widehat A_T^\epl$ when the filter width $\delta$ is dependent on $\epl$. On the horizontal axis, we plot the exponent $\xi$ of $\delta = \epl^\xi$. We see that the estimator is robust as long as the filter width $\delta$ is sufficiently large, meaning that $\widehat A_T^\epl$ provides a reliable approximation of $A$ for any value of $\xi$ approximately in $[0,1]$. However, for larger $\xi$, the estimates $\widehat A_T^\epl$ move away from the correct value $A$ and finally tend to the drift coefficient $\alpha$ of the multiscale dynamics. This is in line with \cref{thm:main}, which gives a sharp transition in $\xi = 2$. We remark this exact behavior cannot be observed in practice due to the finiteness of the final time $T$ and, in particular, of the parameter $\epl$. Nevertheless, in the numerical example we demonstrate that values of $\xi$ closer to $0$, i.e., $\delta$ close to $1$, give better and more robust approximations of the unknown parameters

Now in \cref{fig:SGDCT-convergence}, we consider $\E[|\widehat{A}_T^\epl - A|^2]^{1/2}$, the $L^2$ error of the estimator for a fixed value of $\epl$, as $T\to\infty$ and show that it follows the expected $\mathcal O(T^{-1/2})$ rate highlighted in \cref{rem:rate}. We take the parameters
\[
    \alpha = 1,\quad \epl = 0.1,\quad \sigma=0.5,\quad T=10^4,\quad \Delta t = 10^{-3}.
\]
Because we cannot compute the expected value exactly, we instead compute an approximate error using the Monte Carlo estimate
\[
    \text{Approximate $L^2$ Error at time $n$} = \left(\frac1M \sum_{i=1}^{M} (\widehat{A}_n^{\epl,i} - A)^2\right)^{1/2},
\]
where $M=50$ is the number of multiscale data paths which we sample, and $\widehat{A}_n^{\epl,i}$ is the SGDCT value at time $n$ corresponding to the $i$-th sample. We indeed observe that the approximate $L^2$ error has the same slope as the $\mathcal O(T^{-1/2})$ reference line.

\begin{figure}
    \centering
    \includegraphics[scale=0.5]{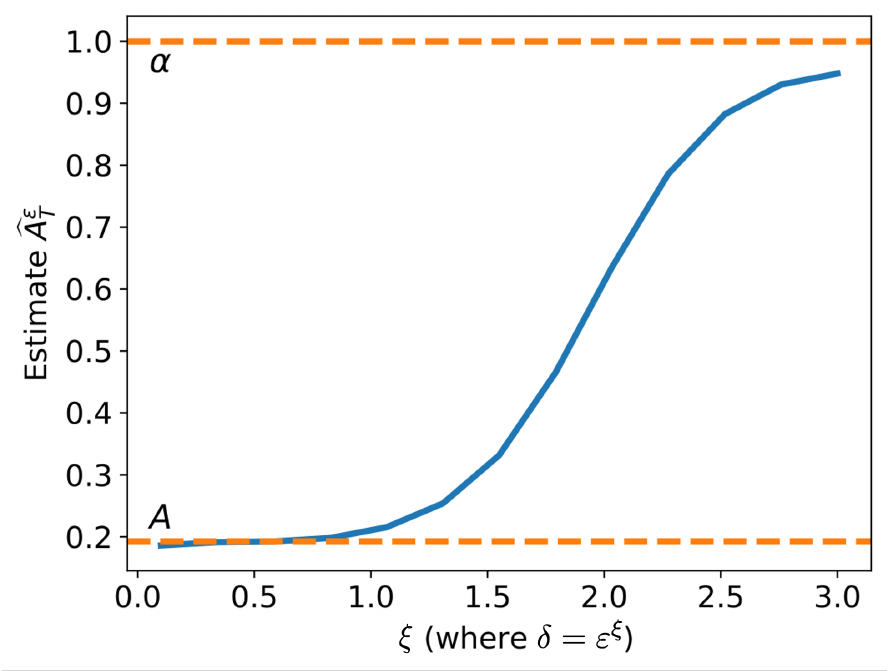}
    \caption{Effect of filter width $\delta$ dependence on $\varepsilon$, for the example in \cref{sec:verification}. The filter width is given by $\delta = \varepsilon^\xi$, where $\xi \in (0,3)$ is the variable on the horizontal axis. The top dashed orange line corresponds to the biased value $\alpha$, while the bottom dashed orange line corresponds to the unbiased value $A$. The blue line gives the value of the exponential filter estimator at the final time $T$.}
    \label{fig:SGDCT-delta}
\end{figure}

\begin{figure}
    \centering
    \includegraphics[scale=0.48]{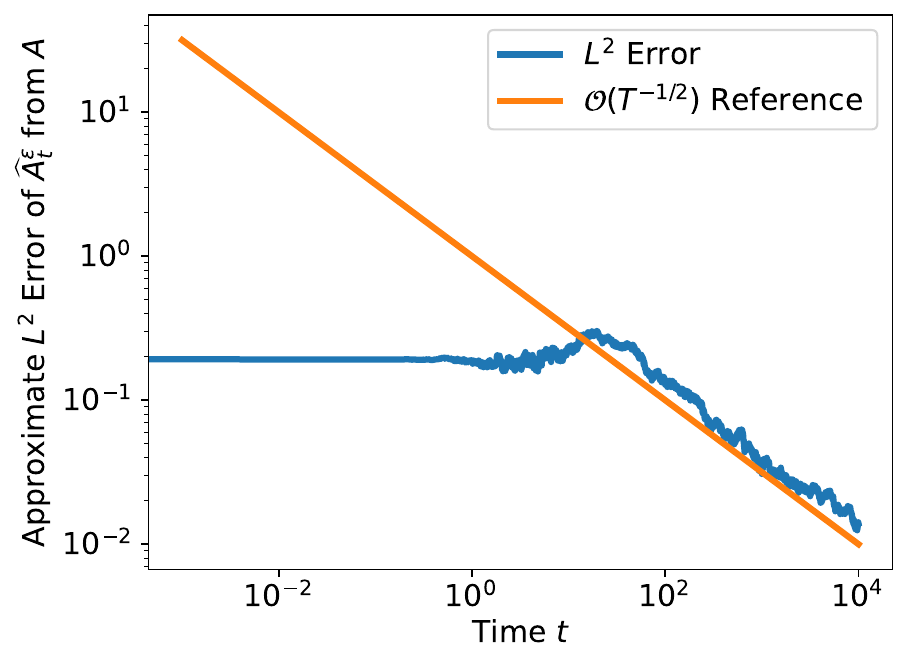}
    \caption{Rate of convergence in time of SGDCT estimates $\widehat A_T^\epl$ with exponential filtered data, for the example in \cref{sec:verification}. A Monte Carlo approximation of the $L^2$ error of the estimates $\widehat A_T^\epl$ from the true value $A$ is given in blue, and a reference line giving an $\mathcal{O}(T^{-1/2})$ rate is given in orange.}
    \label{fig:SGDCT-convergence}
\end{figure}

\subsection{Multidimensional drift coefficient} \label{sec:multidimensional}
In this section, we now consider
\[
    V(x;\alpha) = \alpha_1 \frac{x^4}{4} - \alpha_2 \frac{x^2}{2} \qquad \text{and} \qquad p(x, y) = \sin(y),
\]
so that
\[
    (\mathcal V^\epl)'(x; \alpha) = \alpha_1 x^3 - \alpha_2 x + \frac{1}{\epl}\cos\left(\frac{x}{\epl}\right).
\]
We then estimate $A = \begin{bmatrix} a_1 & a_2 \end{bmatrix}^\top$ in the expansion of $\widetilde b(x; A) = A^\top U(x)$, where $U(x) = \begin{bmatrix} x^3 & -x\end{bmatrix}^\top$.
This estimation is done via the filtered SGDCT scheme \eqref{eq:SGDCT_Euler_Maruyama_scheme}, which in this case is
\[
\widehat A_{n+1}^\epl - \widehat A_n^\epl = -\eta_n \begin{bmatrix} (Z_n^\epl)^3(X_n^\epl)^3 & -(Z_n^\epl)^3X_n^\epl\\ -Z_n^\epl(X_n^\epl)^3 & Z_n^\epl X_n^\epl\end{bmatrix}\widehat A_n^\epl \Delta t - \eta_n \begin{bmatrix}(Z_n^\epl)^3\\ -Z_n^\epl\end{bmatrix}(X_{n+1}^\epl - X_n^\epl),\quad \widehat A_0^\epl = 0,
\]
where $Z_n^\epl$ is computed with the exponential filter \eqref{eq:discrete_exponential_filter} or the moving average \eqref{eq:discrete_moving_average} with $\delta=1$. We use the parameters
\[
    \alpha_1 = 1,\quad \alpha_2 = 2,\quad \epl = 0.1,\quad \sigma=0.5,\quad T=10^5,\quad \Delta t = 10^{-3},
\]
which, repeating the computation in \eqref{eq:A_exp1}, give the homogenized drift coefficients 
\[
    A_1 = \frac1{I_0(2)^2} \simeq 0.19, \qquad A_2 = \frac2{I_0(2)^2} \simeq 0.38.
\]

The results are given in \cref{fig:exp-filtered-2d} and \cref{fig:ma-filtered-2d}, respectively. In each of these figures, we see the path of the SGDCT estimator corresponding to a single sample $X_n$ with the components $A_1$ and $A_2$ first in separate plots, and then in the same plot, where the color of the curve corresponds to time. In each plot (a) and (b), the orange horizontal line corresponds to the true parameter values $A_1$ and $A_2$. In the plot (c), the star represents the true parameter $A$. In both figures, we see the sample estimates of $A_1$ and $A_2$ converging to the true parameters as time increases. We note that a larger time $T$ was required to observe convergence to $A_1$ and $A_2$. In general, we observed that a larger time $T$ was required to observe convergence when there are more parameters to estimate.

\begin{figure}
    \centering
    \begin{tabular}{cc}
    \includegraphics[scale=0.48]{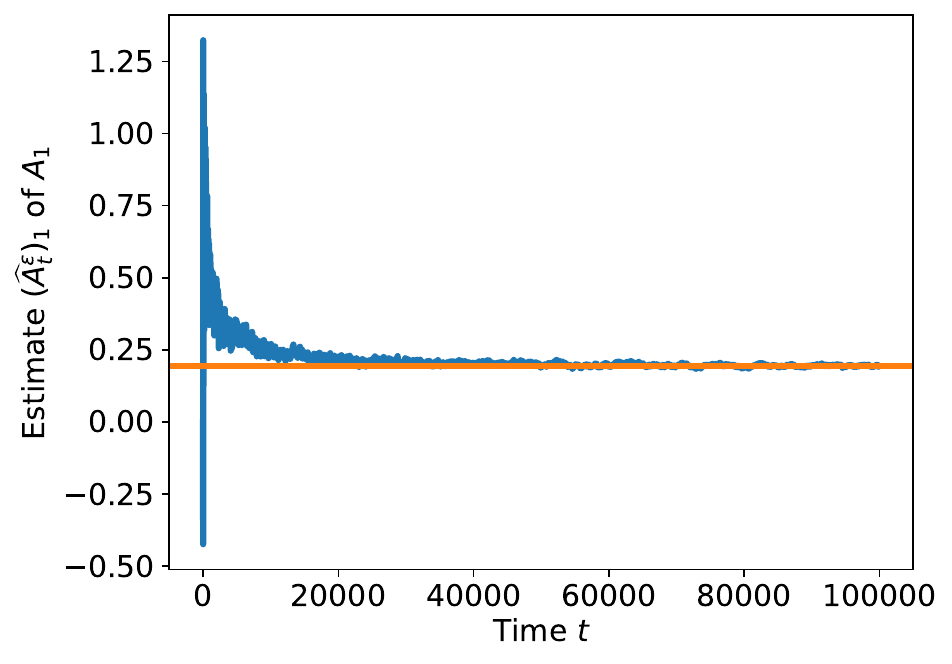} & \includegraphics[scale=0.48]{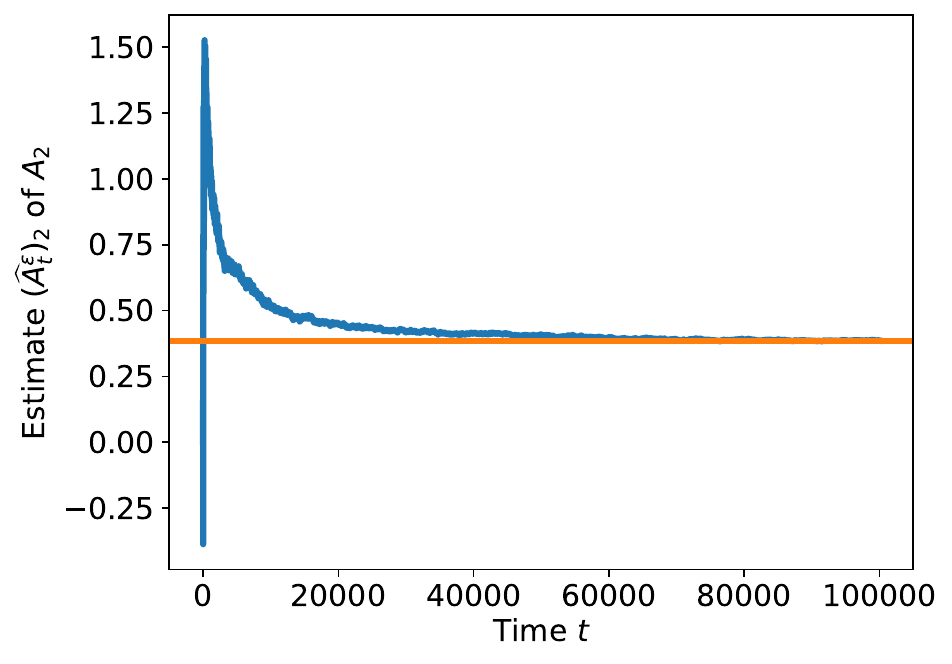}\\
    (a) Sample of SGDCT for $A_1$. & (b) Sample of SGDCT for $A_2$.
    \end{tabular}
    \begin{tabular}{c}
    \includegraphics[scale=0.48]{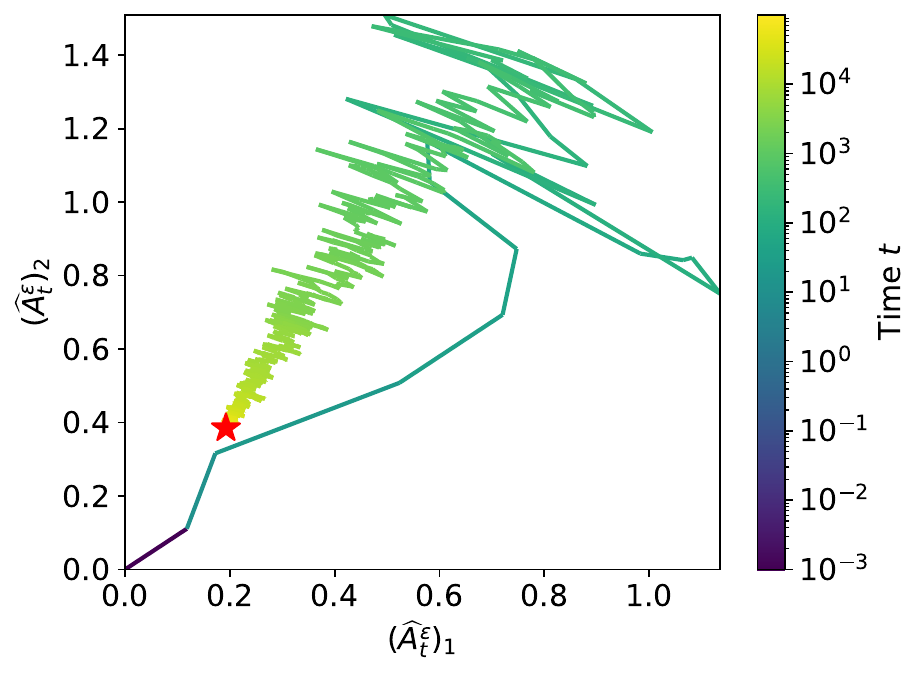}\\
    (c) Sample of SGDCT for $A_1$ and $A_2$.
    \end{tabular}
    \caption{Sample estimates using exponential filtered SGDCT, for the multidimensional drift coefficient in \cref{sec:multidimensional}. In (a) and (b), the blue line is the estimate over time $t$ for $A_1$ and $A_2$, respectively. The orange lines correspond to $A_1$ and $A_2$, respectively. In (c), the curve is the path of the estimate of $(A_1, A_2)$ with the color of the curve representing the time of the estimate and the red star representing to true value $(A_1, A_2)$.}
    \label{fig:exp-filtered-2d}
\end{figure}

\begin{figure}
    \centering
    \begin{tabular}{cc}
    \includegraphics[scale=0.48]{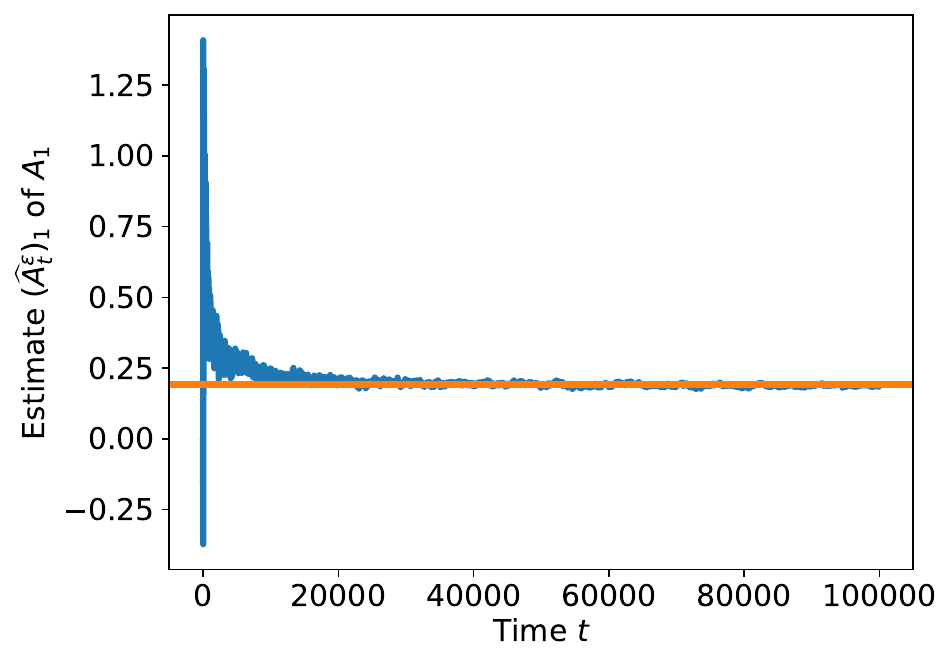} & \includegraphics[scale=0.48]{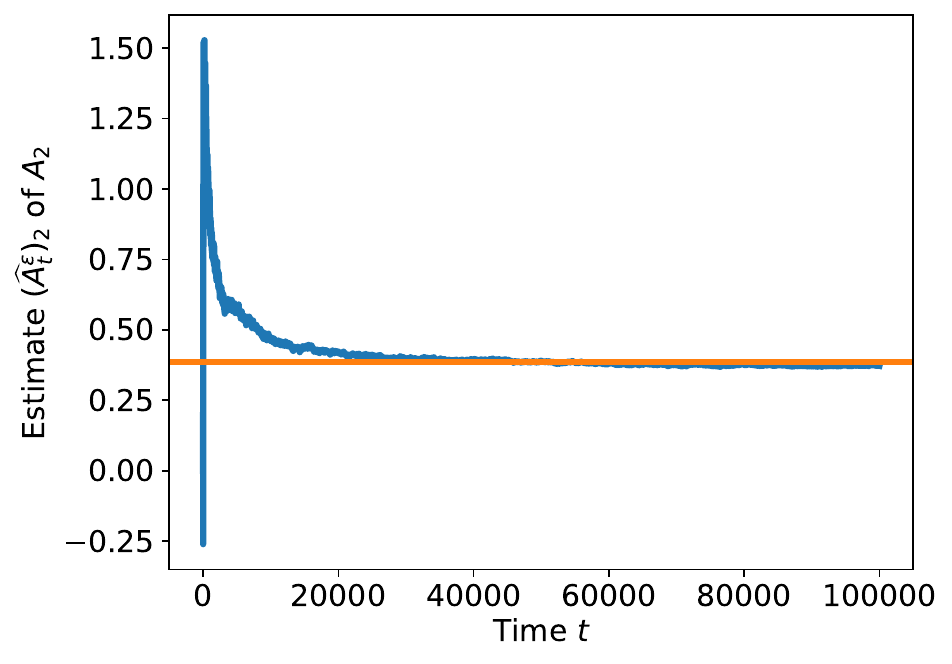}\\
    (a) Sample of SGDCT for $A_1$. & (b) Sample of SGDCT for $A_2$.
    \end{tabular}
    \begin{tabular}{c}
    \includegraphics[scale=0.48]{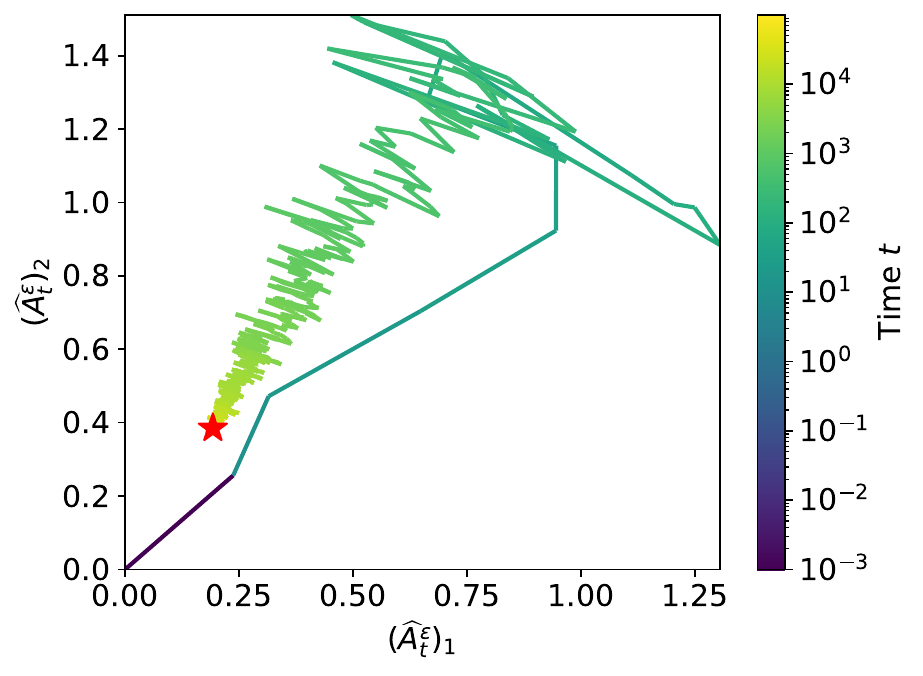}\\
    (c) Sample of SGDCT for $A_1$ and $A_2$.
    \end{tabular}
    \caption{Sample estimates using the moving average SGDCT, for the multidimensional drift coefficient in \cref{sec:multidimensional}. In (a) and (b), the blue line is the estimate over time $t$ for $A_1$ and $A_2$, respectively. The orange lines correspond to $A_1$ and $A_2$, respectively. In (c), the curve is the path of the estimate of $(A_1, A_2)$ with the color of the curve representing the time of the estimate and the red star representing to true value $(A_1, A_2)$.}
    \label{fig:ma-filtered-2d}
\end{figure}

\subsection{Learning the drift function} \label{sec:drift}
In this example, we estimate a more generic drift function $b$ given by a nonseparable multiscale potential. We have
\[
    V(x) = \frac{x^4}{4} - \frac{x^2}{2} \qquad \text{and} \qquad p(x, y) = \frac{x^2}{2}\cos(y)\mathbbm{1}\{|x| \le 2\},
\]
so that
\[
    (\mathcal{V}^\epl)'(x) = x^3 - x + x\cos\left( \frac{x}\epl \right) \mathbbm{1}\{|x|\le 2\} - \frac{x^2}{2\epl}\sin\left( \frac{x}\epl \right)\mathbbm{1}\{|x|\le 2\}.
\]
We remark that in this setting the homogenized drift term $b$ can be computed analytically. In fact, due to \eqref{eq:K1D}, we have
\begin{equation}
\mathcal K(x) = \frac{(2\pi)^2}{\left( \int_0^{2\pi} e^{- \frac1\sigma \frac{x^2}{2}\cos(y)\mathbbm{1}\{|x| \le 2\}} \dd y \right)\left( \int_0^{2\pi} e^{\frac1\sigma \frac{x^2}{2}\cos(y)\mathbbm{1}\{|x| \le 2\}} \dd y \right)} = I_0 \left( \frac{x^2}{2\sigma} \right)^{-2} \mathbbm{1}\{|x| \le 2\} + \mathbbm{1}\{|x| > 2\},
\end{equation}
which, by equation \eqref{eq:drift_hom}, implies
\begin{equation} \label{eq:drift_hom_exp3}
b(x) = \begin{cases}
V'(x) & \text{if } \abs{x} > 2, \\
V'(x) I_0 \left( \frac{x^2}{2\sigma} \right)^{-2} + x I_1 \left( \frac{x^2}{2\sigma} \right) I_0 \left( \frac{x^2}{2\sigma} \right)^{-3} & \text{if } \abs{x} \le 2.
\end{cases}
\end{equation}
We then aim to estimate $A = \begin{bmatrix} a_1 & a_2 & a_3 & a_4 \end{bmatrix}^\top$ in the expansion of $\widetilde{b}(x; A) = A^\top U(x)$ with
\[
    U(x) = \begin{bmatrix} 1 & x & x^2 & x^3\end{bmatrix}^\top.
\]
We do this via the filtered SGDCT scheme \eqref{eq:SGDCT_Euler_Maruyama_scheme} with the above choice of $U$ and the exponential filter with $\delta=1$. We use the parameters
\[
    \epl \in \{0.05, 0.1\},\quad \sigma = 2,\quad T\in\{10, 10^2, 10^3\},\quad \Delta t = 1.25\times10^{-4},
\]
and we set $\gamma = 5/2$ and $\beta = 10$ in the learning rate. The plot of the estimated expansion $\widetilde b$ of $b$ given in \eqref{eq:drift_hom_exp3} for a single sample from \eqref{eq:SDE_Euler_Maruyama_scheme} compared to the true value of $b$ is shown in \cref{fig:experiment3-varying-time}. We see that the approximation $\widetilde b$ matches the shape of $b$, with better approximations for larger $T$. Moreover, even if the difference is not large, the approximation is also better for $\varepsilon = 0.05$ than for $\varepsilon=0.1$.

We now consider the parameters
\[
    \epl \in \{0.05, 0.1, 0.2\},\quad \sigma \in \{1, 2, 4\},\quad T = 10^5,\quad \Delta t = 1.25\times10^{-4},
\]
with the same learning rate, i.e., $\gamma = 5/2$ and $\beta=10$, and the same slow-scale potential $V(x) = \frac{x^4}{4} - \frac{x^2}{2}$. We aim to estimate $A = \begin{bmatrix} a_1 & a_2 & a_3 & a_4 \end{bmatrix}^\top$ with $U(x) = \begin{bmatrix} 1 & x & x^2 & x^3\end{bmatrix}^\top$. We now use the moving average with $\delta=1$. The resulting estimates $\widetilde b(x)$ of the true drift function $b(x)$ are shown in \cref{fig:experiment3-two-term-potential}. We observe that, as expected, results get worse for larger values of $\epl$, and, in particular, $\epl$ must be sufficiently small in order for the approximations to be reliable. Moreover, we see that increasing $\sigma$ smooths out the true drift function, so that it can be better approximated by a cubic function, and as such, the estimates improve for larger $\sigma$.

\begin{figure}
    \centering
    \begin{tabular}{cc}
    \includegraphics[scale=0.5]{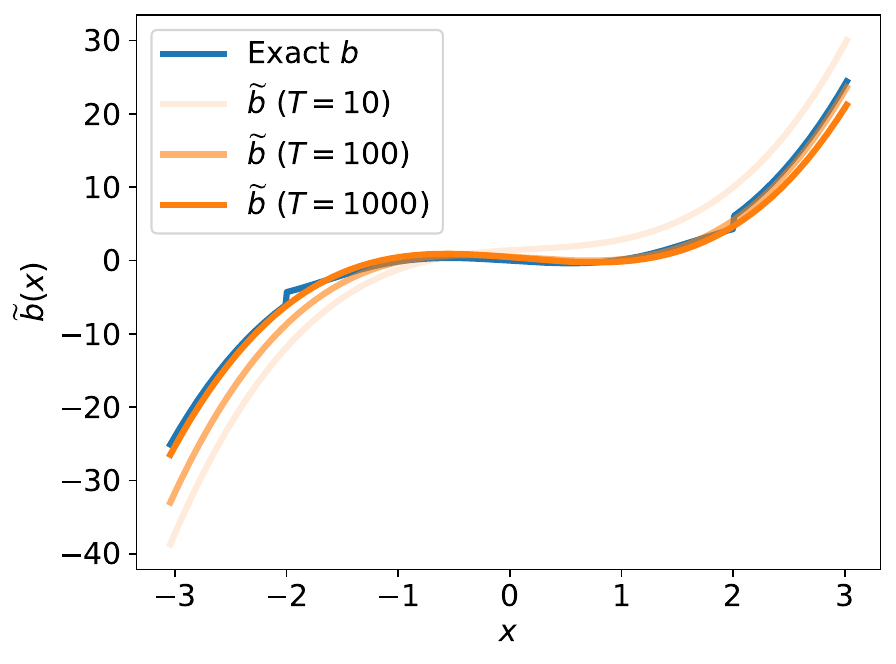} & \includegraphics[scale=0.5]{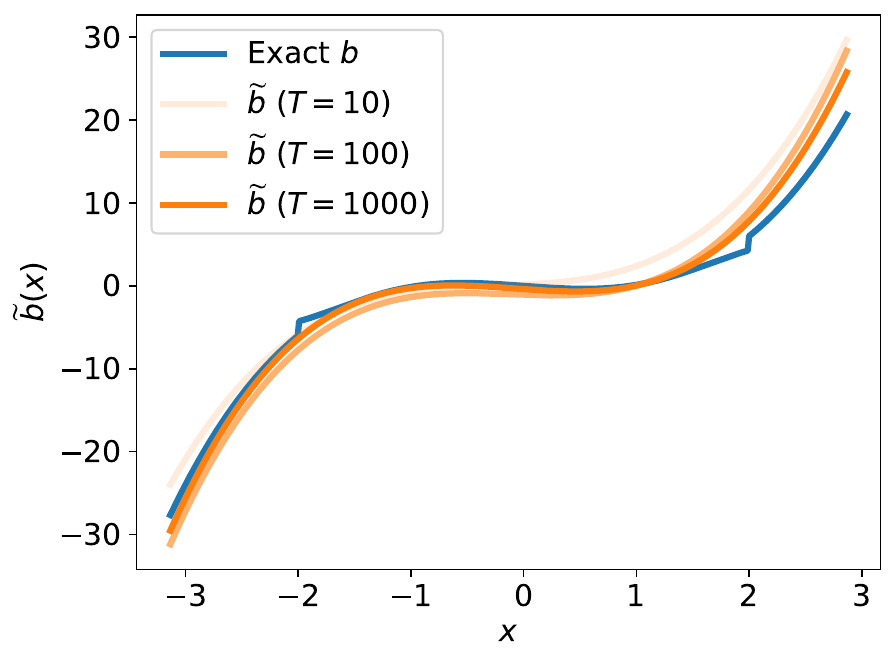}\\
    (a) $\varepsilon = 0.05$ & (b) $\varepsilon = 0.1$
    \end{tabular}
    \caption{Learned drift function $\widetilde b(x)$ at $T=10, 100, 1000$ for SGDCT with exponential filter, for different values of $\epl=0.05, 0.1$, for the example in \cref{sec:drift}. The exact drift function $b$ is given in blue, while the approximations corresponding to $\widetilde b$ are given in shades of orange. Darker shades of orange correspond to the estimates for larger times.}
    \label{fig:experiment3-varying-time}
\end{figure}

\begin{figure}
    \centering
    \begin{tikzpicture}
        \draw (-5.25, 6.5) node {$\sigma = 1$};
        \draw (-0.8, 6.5) node {$\sigma = 2$};
        \draw (3.6, 6.5) node {$\sigma = 4$};
        \draw (-9.25, 4.25) node {$\varepsilon = 0.05$};
        \draw (-9.25, 0.5) node {$\varepsilon = 0.1$};
        \draw (-9.25, -3.5) node {$\varepsilon = 0.2$};
        \draw (-1.5, 0) node[inner sep=0] {
            \centering
            \begin{tabular}{c}
            \includegraphics[scale=0.85]{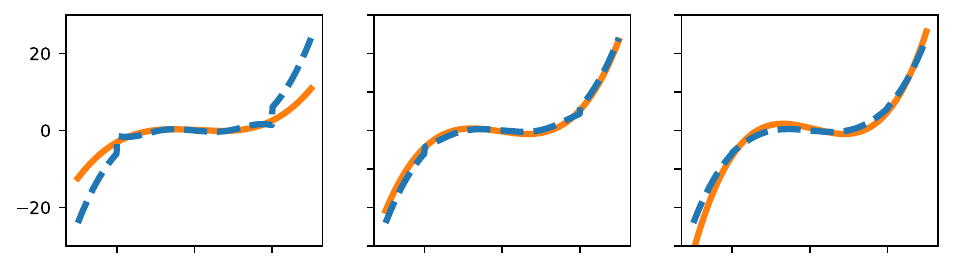}\\
            \includegraphics[scale=0.85]{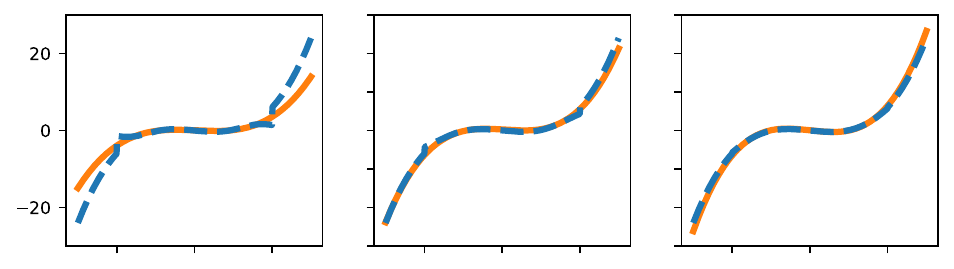}\\
            \includegraphics[scale=0.85]{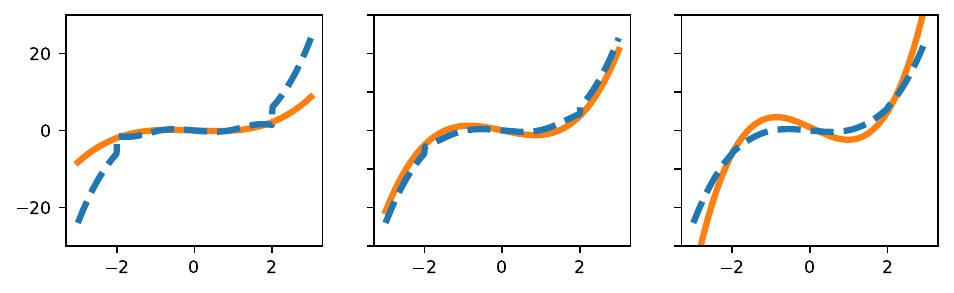}
            \end{tabular}
        };
    \end{tikzpicture}
    \caption{Learned drift functions $\widetilde b(x)$ (solid orange) compared to true drift function $b(x)$ (dashed blue), varying the multiscale parameter $\epl$ and the diffusion coefficient $\sigma$, for the example in \cref{sec:drift}. Estimates are obtained using data filtered with a moving average.}
    \label{fig:experiment3-two-term-potential}
\end{figure}

We finally consider estimating the slow-scale potential with $3$ and $5$ basis functions to investigate the effect of having too few or too many basis functions compared to the $4$ basis functions needed to represent the slow-scale potential. We consider the parameters
\[
    \varepsilon = 0.1,\quad \sigma = 2,\quad T = 10^3,\quad \Delta t = 1.25 \times 10^{-4},
\]
with the same learning rate $\gamma = 5/2$ and $\beta=10$ and the same slow-scale potential $V(x) = \frac{x^4}{4}-\frac{x^2}{2}$. We aim to estimate $A = \begin{bmatrix} a_1 & a_2 & a_3 \end{bmatrix}^\top$ with $U(x) = \begin{bmatrix} 1 & x & x^2\end{bmatrix}^\top$, $A = \begin{bmatrix} a_1 & a_2 & a_3 & a_4\end{bmatrix}^\top$ with $U(x) = \begin{bmatrix} 1 & x & x^2 & x^3\end{bmatrix}^\top$, and $A = \begin{bmatrix} a_1 & a_2 & a_3 
 & a_4 & a_5\end{bmatrix}^\top$ with $U(x) = \begin{bmatrix} 1 & x & x^2 & x^3 & x^4\end{bmatrix}^\top$. The results are shown in Figure \ref{fig:experiment3-varying-basis}. As we see from Figure \ref{fig:experiment3-varying-basis}(a), we are not able to represent the drift function with too few basis functions. When there is a larger number of basis functions than is needed to approximate the drift function as in Figure \ref{fig:experiment3-varying-basis}(c), we are still able to approximate the drift function, but a larger time $T$ is required for convergence of the estimator compared to the time required for a smaller number of basis functions in our approximation, like the plot in Figure \ref{fig:experiment3-varying-basis}(b), making the results slightly worse.

\begin{figure}
    \centering
    \begin{tabular}{ccc}
    \includegraphics[scale=0.34]{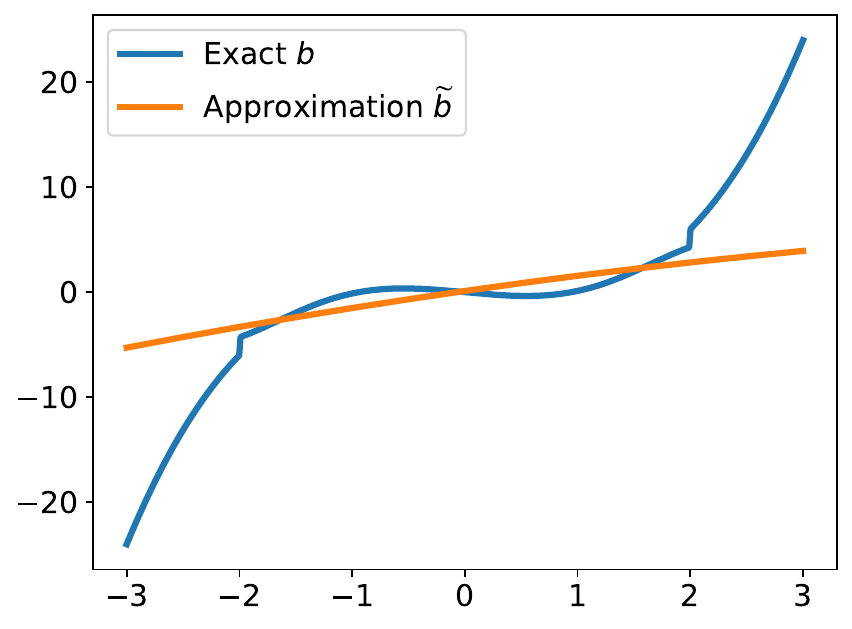} & 
    \includegraphics[scale=0.34]{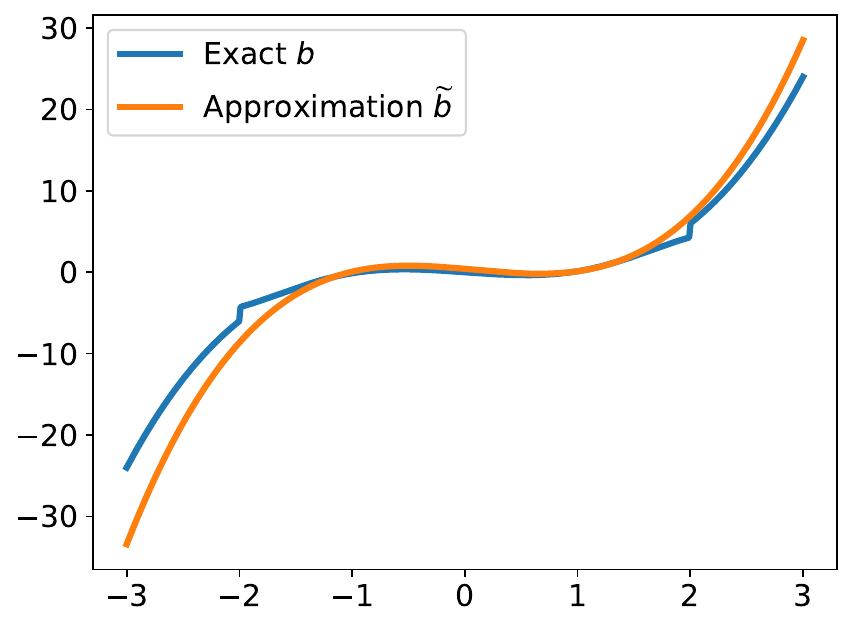} &
    \includegraphics[scale=0.34]{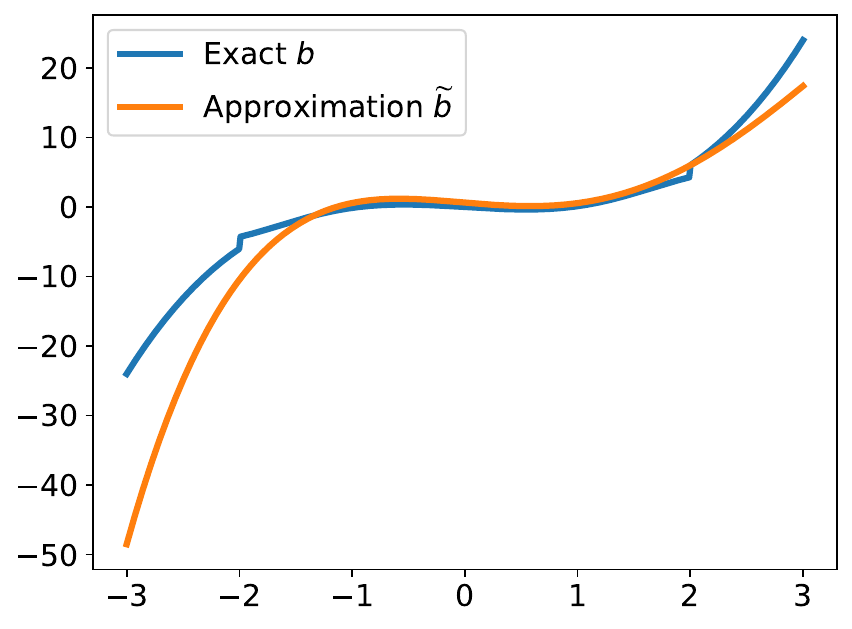}\\
    (a) 3 basis functions & (b) 4 basis functions & (b) 5 basis functions
    \end{tabular}
    \caption{Learned drift functions $\widetilde b(x)$ at time $T$ (in orange) compared to true drift function $b(x)$ (in blue) for (a) 3 basis functions, (b) 4 basis functions, and (c) 5 basis functions in the approximation $\widetilde b(x)$, for the example in \cref{sec:drift}. Estimates are obtained using data filtered with a moving average.}
    \label{fig:experiment3-varying-basis}
\end{figure}

\section{Conclusion} \label{sec:conclusion}

In this work we focused on multiscale diffusion processes of the Langevin type, considering also an example of nonseparable multiscale potentials, meaning that the fast-scale oscillations are dependent on the slow-scale component. The theory of the homogenization gives the existence of a homogenized SDE which only captures the behavior of the slowest scale. We aimed to infer the drift term of this effective equation given continuous-time observations from the multiscale dynamics. We considered a semiparametric approach and wrote the drift function in terms of a truncated series of basis functions. We then proposed to estimate the coefficients of this expansion coupling SGDCT and filtered data in order to overcome the issue of incompatibility between data and model. We derived a rigorous theoretical analysis in the particular setting where only a one-dimensional drift coefficient is unknown, the confining potential of the Langevin dynamics is separable, and the state space is compact. In this context, we proved the asymptotic unbiasedness of our novel estimator in the limit of infinite time and when the multiscale parameter vanishes. Moreover, we performed a series of numerical experiments which showed that our estimator can be successfully employed in more general frameworks. Therefore, it would be interesting to extend the range of applicability of the theoretical analysis to more general scenarios by lifting some of the assumptions. This would first require preliminary studies related to the solution of the Poisson problem for hypoelliptic operators in unbounded domains, function approximation, and stochastic delay differential equations. Finally, another interesting future direction of research is the study of the rate of convergence and possibly a central limit theorem for the SGDCT estimator with filtered data. This would indeed give more precise information about the accuracy of the estimator in the finite regime, and indications on the choice of the parameters in the learning rate. We remark that the rate of convergence with respect to the multiscale parameter is dependent on the rate of weakly convergence of the invariant measure of the multiscale diffusion process together with its filtered data towards the homogenized invariant measure. Hence, we first plan to analyze the convergence in law of the joint process of the original and filtered data towards its homogenized counterpart. 

\vspace{2ex}
\noindent\textit{Data availability statement.} The code used to produce the numerical experiments in this article can be found on the Github repository \url{https://github.com/maxhirsch/Multiscale-SGDCT} \cite{PaperCode}.

\vspace{2ex}
\noindent\textit{Acknowledgements.} We thank the anonymous reviewers whose comments and suggestions helped improve and clarify this manuscript. AZ is grateful to Assyr Abdulle for introducing him to the world of research. This material is based upon work supported by the National Science Foundation Graduate Research Fellowship Program under Grant No.\ DGE 2146752. Any opinions, findings, and conclusions or recommendations expressed in this material are those of the authors and do not necessarily reflect the views of the National Science Foundation. MH also acknowledges the ThinkSwiss Research Scholarship and the EPFL Excellence Research Scholarship. AZ was partially supported by the Swiss National Science Foundation, under grant No. 200020\_172710.

\bibliographystyle{abbrv}
\bibliography{bibliography}

\begin{thebibliography}{10}

\bibitem{AGP23}
A.~Abdulle, G.~Garegnani, G.~A. Pavliotis, A.~M. Stuart, and A.~Zanoni.
\newblock Drift estimation of multiscale diffusions based on filtered data.
\newblock {\em Found. Comput. Math.}, 23(1):33--84, 2023.

\bibitem{APZ22}
A.~Abdulle, G.~A. Pavliotis, and A.~Zanoni.
\newblock Eigenfunction martingale estimating functions and filtered data for
  drift estimation of discretely observed multiscale diffusions.
\newblock {\em Stat. Comput.}, 32(2):Paper No. 34, 33, 2022.

\bibitem{AiJ14}
Y.~A\"it-Sahalia and J.~Jacod.
\newblock {\em High-frequency financial econometrics}.
\newblock Princeton University Press, 2014.

\bibitem{AMZ05}
Y.~A\"it-Sahalia, P.~A. Mykland, and L.~Zhang.
\newblock How often to sample a continuous-time process in the presence of
  market microstructure noise.
\newblock {\em Rev. Financ. Stud}, 18(2):351--416, 2005.

\bibitem{ABM96}
A.~Arnold, L.~L. Bonilla, and P.~A. Markowich.
\newblock Liapunov functionals and large-time-asymptotics of mean-field
  nonlinear {F}okker-{P}lanck equations.
\newblock {\em Transport Theory Statist. Phys.}, 25(7):733--751, 1996.

\bibitem{ABJ13}
R.~Azencott, A.~Beri, A.~Jain, and I.~Timofeyev.
\newblock Sub-sampling and parametric estimation for multiscale dynamics.
\newblock {\em Commun. Math. Sci.}, 11(4):939--970, 2013.

\bibitem{ABT10}
R.~Azencott, A.~Beri, and I.~Timofeyev.
\newblock Adaptive sub-sampling for parametric estimation of {G}aussian
  diffusions.
\newblock {\em J. Stat. Phys.}, 139(6):1066--1089, 2010.

\bibitem{BLP78}
A.~Bensoussan, J.-L. Lions, and G.~Papanicolaou.
\newblock {\em Asymptotic analysis for periodic structures}, volume~5 of {\em
  Studies in Mathematics and its Applications}.
\newblock North-Holland Publishing Co., Amsterdam-New York, 1978.

\bibitem{BMP90}
A.~Benveniste, M.~M\'{e}tivier, and P.~Priouret.
\newblock {\em Adaptive algorithms and stochastic approximations}, volume~22 of
  {\em Applications of Mathematics (New York)}.
\newblock Springer-Verlag, Berlin, 1990.
\newblock Translated from the French by Stephen S. Wilson.

\bibitem{BeT00}
D.~P. Bertsekas and J.~N. Tsitsiklis.
\newblock Gradient convergence in gradient methods with errors.
\newblock {\em SIAM J. Optim.}, 10(3):627--642, 2000.

\bibitem{CoP09}
C.~J. Cotter and G.~A. Pavliotis.
\newblock Estimating eddy diffusivities from noisy {L}agrangian observations.
\newblock {\em Commun. Math. Sci.}, 7(4):805--838, 2009.

\bibitem{DDP23}
A.~B. Duncan, M.~H. Duong, and G.~A. Pavliotis.
\newblock Brownian motion in an {$N$}-scale periodic potential.
\newblock {\em J. Stat. Phys.}, 190(4):Paper No. 82, 34, 2023.

\bibitem{DKP16}
A.~B. Duncan, S.~Kalliadasis, G.~A. Pavliotis, and M.~Pradas.
\newblock Noise-induced transitions in rugged energy landscapes.
\newblock {\em Phys. Rev. E}, 94:032107, Sep 2016.

\bibitem{GaS17}
S.~Gailus and K.~Spiliopoulos.
\newblock Statistical inference for perturbed multiscale dynamical systems.
\newblock {\em Stochastic Process. Appl.}, 127(2):419--448, 2017.

\bibitem{GaS18}
S.~Gailus and K.~Spiliopoulos.
\newblock Discrete-time statistical inference for multiscale diffusions.
\newblock {\em Multiscale Model. Simul.}, 16(4):1824--1858, 2018.

\bibitem{GSM19}
E.~Garc\'{\i}a-Portugu\'{e}s, M.~S\o~rensen, K.~V. Mardia, and T.~Hamelryck.
\newblock Langevin diffusions on the torus: estimation and applications.
\newblock {\em Stat. Comput.}, 29(1):1--22, 2019.

\bibitem{GaZ23}
G.~Garegnani and A.~Zanoni.
\newblock Robust estimation of effective diffusions from multiscale data.
\newblock {\em Commun. Math. Sci.}, 21(2):405--435, 2023.

\bibitem{GoP18}
S.~N. Gomes and G.~A. Pavliotis.
\newblock Mean field limits for interacting diffusions in a two-scale
  potential.
\newblock {\em J. Nonlinear Sci.}, 28(3):905--941, 2018.

\bibitem{PaperCode}
M.~Hirsch.
\newblock Python code for ``{S}tochastic gradient descent in continuous time
  for drift identification in multiscale diffusions" (2024).
\newblock \url{https://github.com/maxhirsch/Multiscale-SGDCT}.

\bibitem{IkW77}
N.~Ikeda and S.~Watanabe.
\newblock A comparison theorem for solutions of stochastic differential
  equations and its applications.
\newblock {\em Osaka Math. J.}, 14(3):619--633, 1977.

\bibitem{KeS99}
M.~Kessler and M.~S\o~rensen.
\newblock Estimating equations based on eigenfunctions for a discretely
  observed diffusion process.
\newblock {\em Bernoulli}, 5(2):299--314, 1999.

\bibitem{KPK13}
S.~Krumscheid, G.~A. Pavliotis, and S.~Kalliadasis.
\newblock Semiparametric drift and diffusion estimation for multiscale
  diffusions.
\newblock {\em Multiscale Model. Simul.}, 11(2):442--473, 2013.

\bibitem{KPP15}
S.~Krumscheid, M.~Pradas, G.~A. Pavliotis, and S.~Kalliadasis.
\newblock Data-driven coarse graining in action: Modeling and prediction of
  complex systems.
\newblock {\em Physical Review E}, 92(4):042139, 2015.

\bibitem{KuY03}
H.~J. Kushner and G.~G. Yin.
\newblock {\em Stochastic approximation and recursive algorithms and
  applications}, volume~35 of {\em Applications of Mathematics (New York)}.
\newblock Springer-Verlag, New York, second edition, 2003.
\newblock Stochastic Modelling and Applied Probability.

\bibitem{FrL12}
F.~Legoll and T.~Leli\`evre.
\newblock Some remarks on free energy and coarse-graining.
\newblock In {\em Numerical analysis of multiscale computations}, volume~82 of
  {\em Lect. Notes Comput. Sci. Eng.}, pages 279--329. Springer, Heidelberg,
  2012.

\bibitem{LeS16}
T.~Leli\`evre and G.~Stoltz.
\newblock Partial differential equations and stochastic methods in molecular
  dynamics.
\newblock {\em Acta Numer.}, 25:681--880, 2016.

\bibitem{MST10}
J.~C. Mattingly, A.~M. Stuart, and M.~V. Tretyakov.
\newblock Convergence of numerical time-averaging and stationary measures via
  {P}oisson equations.
\newblock {\em SIAM J. Numer. Anal.}, 48(2):552--577, 2010.

\bibitem{OSP10}
S.~C. Olhede, A.~M. Sykulski, and G.~A. Pavliotis.
\newblock Frequency domain estimation of integrated volatility for {I}t\^{o}
  processes in the presence of market-microstructure noise.
\newblock {\em Multiscale Model. Simul.}, 8(2):393--427, 2010.

\bibitem{PPS09}
A.~Papavasiliou, G.~A. Pavliotis, and A.~M. Stuart.
\newblock Maximum likelihood drift estimation for multiscale diffusions.
\newblock {\em Stochastic Process. Appl.}, 119(10):3173--3210, 2009.

\bibitem{PaV01}
E.~Pardoux and A.~Y. Veretennikov.
\newblock On the {P}oisson equation and diffusion approximation. {I}.
\newblock {\em Ann. Probab.}, 29(3):1061--1085, 2001.

\bibitem{PaV03}
E.~Pardoux and A.~Y. Veretennikov.
\newblock On {P}oisson equation and diffusion approximation. {II}.
\newblock {\em Ann. Probab.}, 31(3):1166--1192, 2003.

\bibitem{PaV05}
E.~Pardoux and A.~Y. Veretennikov.
\newblock On the {P}oisson equation and diffusion approximation. {III}.
\newblock {\em Ann. Probab.}, 33(3):1111--1133, 2005.

\bibitem{PPS12}
G.~A. Pavliotis, Y.~Pokern, and A.~M. Stuart.
\newblock Parameter estimation for multiscale diffusions: an overview.
\newblock In {\em Statistical methods for stochastic differential equations},
  volume 124 of {\em Monogr. Statist. Appl. Probab.}, pages 429--472. CRC
  Press, Boca Raton, FL, 2012.

\bibitem{PRZ24}
G.~A. Pavliotis, S.~Reich, and A.~Zanoni.
\newblock Filtered data based estimators for stochastic processes driven by
  colored noise, 2024.
\newblock Preprint arXiv:2312.15975.

\bibitem{PaS07}
G.~A. Pavliotis and A.~M. Stuart.
\newblock Parameter estimation for multiscale diffusions.
\newblock {\em J. Stat. Phys.}, 127(4):741--781, 2007.

\bibitem{PaS08}
G.~A. Pavliotis and A.~M. Stuart.
\newblock {\em Multiscale methods}, volume~53 of {\em Texts in Applied
  Mathematics}.
\newblock Springer, New York, 2008.
\newblock Averaging and homogenization.

\bibitem{SiS17}
J.~Sirignano and K.~Spiliopoulos.
\newblock Stochastic gradient descent in continuous time.
\newblock {\em SIAM J. Financial Math.}, 8(1):933--961, 2017.

\bibitem{SiS20}
J.~Sirignano and K.~Spiliopoulos.
\newblock Stochastic gradient descent in continuous time: a central limit
  theorem.
\newblock {\em Stoch. Syst.}, 10(2):124--151, 2020.

\bibitem{smith2018disciplinedapproachneuralnetwork}
L.~N. Smith.
\newblock A disciplined approach to neural network hyper-parameters: Part 1 --
  learning rate, batch size, momentum, and weight decay, 2018.
\newblock Preprint arXiv:1803.09820.

\bibitem{Zan23}
A.~Zanoni.
\newblock Homogenization results for the generator of multiscale {L}angevin
  dynamics in weighted {S}obolev spaces.
\newblock {\em IMA J. Appl. Math.}, 88(1):67--101, 2023.

\end{thebibliography}

\end{document}